\DeclareRobustCommand{\SkipTocEntry}[5]{}
\definecolor{LOcolor}{RGB}{150,100,0}
\newtheorem{Theorem}{Theorem}[section]
\newtheorem{Lemma}[Theorem]{Lemma}
\newtheorem{Proposition}[Theorem]{Proposition}
\theoremstyle{definition}
\newtheorem{Remark}[Theorem]{Remark}
\numberwithin{equation}{section}
\newcommand{\mR}{\mathbb{R}}                    % Formatting for R
\DeclarePairedDelimiterX{\seminorm}[1]{[}{]}{#1}
\DeclarePairedDelimiterX{\norm}[1]{\lVert}{\rVert}{#1}
\DeclarePairedDelimiterX{\inner}[2]{\langle}{\rangle}{\,#1,\,#2}
\DeclarePairedDelimiterX{\abs}[1]{\lvert}{\rvert}{#1}
\newcommand{\ol}[1]{\overline{#1}}
\newcommand{\eps}{\varepsilon}
\newcommand{\p}{\partial}
\newcommand{\e}{\varepsilon}
\newcommand{\Om}{\Omega}
\newcounter{sidenote}
\begin{document}

\title{Inverse problems for semilinear elliptic equations with low regularity} 

\author[D. Johansson]{David Johansson}
\address{Department of Mathematics, Aarhus University}
\email{johansson@math.au.dk}

\author[J. Nurminen]{Janne Nurminen}
\address{Computational Engineering, School of Engineering Sciences, Lappeenranta-Lahti University of Technology, Finland \& Department of Mathematics and Statistics, University of Jyväskylä, Jyväskylä, Finland}
\email{janne.s.nurminen@jyu.fi, janne.nurminen@lut.fi}

\author[M. Salo]{Mikko Salo}
\address{Department of Mathematics and Statistics, University of Jyäskylä}
\email{mikko.j.salo@jyu.fi}

%\subjclass{53C25, 53C21, 58F17, 35J15}

%\date{\today}

%\maketitle

\begin{abstract}
We show that a general nonlinearity $a(x,u)$ is uniquely determined, possibly up to a gauge, in a neighborhood of a fixed solution from boundary measurements of the corresponding semilinear equation. The main theorems are low regularity counterparts of the results in our recent paper (Johansson, Nurminen, Salo; ArXiv preprint 2312.12196).
\end{abstract}

\maketitle

\section{Introduction}\label{sec_intro}

This article deals with inverse boundary value problems for semilinear equations of the form 
\[
\Delta u(x) + a(x,u(x)) = 0 \text{ in $\Omega$},
\]
where $\Omega  \subseteq  \mR^n$ is a bounded open set with smooth boundary and the function $a(x,t)$ represents the nonlinearity. There is a large literature on this topic. The first order linearization method introduced in \cite{Isakov1993} has been employed to show that nonlinearities $a(x,t)$ satisfying conditions such as 
\begin{align*}
a(x,0) &= 0, \\
\p_t a(x,t) &\leq 0,
\end{align*}
can be determined in a certain reachable set from boundary measurements. The first condition above ensures that $0$ is a solution, and the second sign condition guarantees that a maximum principle and well-posedness hold. Various results of this type, also for somewhat more general nonlinearities, may be found in \cites{IS, IN, Sun, ImanuvilovYamamoto2013}. See also the surveys \cites{Sun2005, Uhlmann2009}.  In the results based on first order linearization, one typically uses known results on inverse problems for the linearized equation.

On the other hand, the higher order linearization method introduced in \cites{FO, LLLS} (following the hyperbolic case in \cite{KLU}) applies to nonlinearities that do not need to satisfy any sign condition. Moreover, this method uses the nonlinearity as a beneficial tool and yields results in certain nonlinear cases where the corresponding results for linear equations remain unsolved \cites{KU, LLLS2, CFO23, FKO23, ST, Nurminen_unbounded}. However, the method might only allow one to determine the Taylor series of $a(x,t)$ at $t=0$, and it does not in general determine $a(x,t)$ for $t \neq 0$. The higher order linearization method has also been used outside of semilinear equations. For quasilinear equations one has the similar phenomenon of determining only the Taylor series of the unknown coefficient \cites{CFKKU, KKU}. Also inverse problems for the minimal surface equation (which is an example of a quasilinear equation) on Riemannian manifolds have been studied with this method. In \cites{nurminenMSE, nurminenMSE2} the Taylor series of a conformal factor for metrics in the same conformal class is recovered. In dimension two it is shown in \cites{CLLO, CLLT, CLT} (in slightly different settings) that three linearizations are enough to determine the Riemannian metric up to an isometry. 

Recently in \cite{JNS2023} (see also \cite{nurminensahoo} for a similar result for a biharmonic operator with a second order nonlinearity) we gave a result showing that from boundary measurements near the zero solution, one can determine a general nonlinearity $a(x,t)$ near $t=0$ whenever $a(x,0) = 0$. We also gave a similar result without the assumption $a(x,0) = 0$, but in that case $a(x,t)$ can only be determined up to a natural gauge transformation. The precise assumption for the nonlinearity was that $a(x,t)$ should be $C^{1,\alpha}$ in $x$ and $C^3$ in $t$. In this article we improve the regularity assumptions to $L^{r}$ in $x$ and $C^{1,\alpha}$ in $t$, where $r > n/2$. We also simplify the proofs in the process. The method is based on first linearization and a Runge approximation argument.

Let us state the main results. We will assume that $\Omega  \subseteq  \mR^n$, $n \geq 2$, is a bounded open set with smooth boundary (though many arguments would remain valid for $C^{1,1}$ boundaries). We consider nonlinearities $a \in L^r(\Om, C^{1,\alpha}(\mR))$ for some $r > n/2$. If $u \in L^{\infty}(\Omega)$ with $M = \norm{u}_{L^{\infty}(\Om)}$, then $a(x,u(x)) \in L^r(\Omega)$ since 
\[
\int_{\Omega} |a(x,u(x))|^r \,dx \leq \int_{\Omega} \sup_{|t| \leq M} |a(x,t)|^r \,dx \leq \norm{a}_{L^r(\Omega, L^{\infty}([-M,M]))}^r.
\]
We denote by $W^{k,p}(\Om)$ the standard Sobolev spaces in $\Om$. If $u \in W^{2,r}(\Om)$ with $r > n/2$, then (after choosing a suitable representative) $u \in C^{\alpha}(\ol{\Om})$ for some $\alpha > 0$ by Sobolev embedding, so $a(x,u(x))$ is well defined.

We wish to study inverse problems without any further assumptions on the nonlinearity $a(x,t)$. In particular we do not assume well-posedness of the Dirichlet problem, and hence the boundary measurements will be formulated in terms of Cauchy data sets (see \cite{FSU_book}). Given a solution $w \in W^{2,r}(\Om)$ of $\Delta w + a(x,w) = 0$ in $\Om$, and given $\delta > 0$, we define the Cauchy data set for solutions near $w$ as 
\[
C_{a}^{w,\delta} = \{ (u|_{\p \Om}, \p_{\nu} u|_{\p \Om}) \,:\, u \in W^{2,r}(\Om), \ \ \Delta u + a(x,u) = 0 \text{ in $\Om$}, \ \ \norm{u-w}_{W^{2,r}(\Om)} \leq \delta \}.
\]
If $s > 1/r$ we denote the trace space of $W^{s,r}(\Om)$ by $W^{s-\frac{1}{r},r}(\p \Om) := B^{s-\frac{1}{r}}_{rr}(\p \Om)$, see \cite{Triebel1983}. Then $C_a^{w,\delta}$ is a subset of $W^{2-\frac{1}{r},r}(\p \Om) \times W^{1-\frac{1}{r},r}(\p \Om)$. If the semilinear equation happens to be well-posed for Dirichlet data close to $w|_{\p \Om}$, then the set $C_{a}^{w,\delta}$ contains the graph of the corresponding nonlinear Dirichlet-to-Neumann map for Dirichlet data close to $w|_{\p \Om}$ and vice versa. For further discussion on the set $C_{a}^{w,\delta}$ and its relation to Dirichlet-to-Neumann maps, we refer the reader to \cite{JNS2023}.

Our first result shows that if two nonlinearities $a_1$ and $a_2$ admit a common solution $w$ and have the same Cauchy data for solutions near $w$, then $a_1(x,t) = a_2(x,t)$ for $t$ near $w(x)$. We assume that the nonlinearities are $C^{1,\alpha}$ in $t$, but in fact uniform $C^1$ regularity in $t$ would be enough (see Remark \ref{remark_c1_uniform}).

\begin{Theorem} \label{thm_main1}
Let $a_1, a_2 \in L^r(\Om, C^{1,\alpha}(\mR))$ where $r > n/2$, $r \geq 2$ and $\alpha > 0$. Suppose that $w \in W^{2,r}(\Om)$ solves $\Delta w + a_j(x,w) = 0$ in $\Om$ for $j = 1,2$. If for some $\delta, C > 0$ one has 
\[
C_{a_1}^{w,\delta}  \subseteq  C_{a_2}^{0,C},
\]
then there is $\eps > 0$ such that 
\[
a_1(x,w(x)+\lambda) = a_2(x,w(x)+\lambda) \text{ for a.e.\ $x \in \Omega$ and for all $\lambda \in (-\eps, \eps)$.}
\]
\end{Theorem}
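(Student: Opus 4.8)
The plan is to reconstruct the nonlinearity pointwise in $t$ near $w(x)$ by producing, for each small $\lambda$, a solution of the first equation that agrees with $w$ except for a controlled perturbation, and then using the Cauchy data hypothesis to transfer information to $a_2$. Concretely, I would first fix a small constant $\lambda$ and seek a solution $u_\lambda \in W^{2,r}(\Om)$ of $\Delta u_\lambda + a_1(x,u_\lambda) = 0$ that is close to $w$ in $W^{2,r}$ and whose ``vertical shift'' is approximately $\lambda$ near a given point; the natural way to do this is via a Runge-type approximation argument. The linearization of $\Delta u + a_1(x,u) = 0$ at $w$ is the Schrödinger-type operator $L_1 v = \Delta v + \p_t a_1(x,w(x)) v$, with potential $q_1(x) := \p_t a_1(x,w(x)) \in L^r(\Om)$ (here $C^{1,\alpha}$ in $t$ is used so that $\p_t a_1$ makes sense and the potential lies in $L^r$). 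I would use an implicit function theorem / fixed point argument in $W^{2,r}$ to show that for Dirichlet data close to $w|_{\p\Om}$ lying in a suitable set, there is a nearby solution $u$ of the nonlinear problem depending continuously on the data, so that families of approximate solutions can be constructed.

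The key step is Runge approximation: I want to approximate the constant function $\lambda$ (or $w + \lambda$) locally by differences $u - w$ of solutions. Standard elliptic unique continuation for $L_1 = \Delta + q_1$ with $q_1 \in L^r$, $r > n/2$, gives that solutions of $L_1 v = 0$ in $\Om$ vanishing to infinite order (or with vanishing Cauchy data on part of the boundary) must vanish, and a duality/Hahn–Banach argument then yields that solutions of $L_1 v = 0$ in a neighborhood $\Om' \Supset \Om_0$ can be $L^2$- (in fact $W^{2,r}_{loc}$-) approximated on $\Om_0$ by global solutions in $\Om$ with controlled boundary data. Applying this, I can find a sequence of global solutions $v_k$ of $\Delta v_k + q_1 v_k = 0$ in $\Om$ with $v_k \to 1$ in an interior region; then $w + \lambda v_k$ is an approximate solution of the first nonlinear equation near $w$, and correcting it by the nonlinear solvability statement gives an exact solution $u_{k,\lambda}$ of $\Delta u + a_1(x,u) = 0$ with $\norm{u_{k,\lambda} - w}_{W^{2,r}} \le \delta$ for $k$ large and $\lambda$ small, so that $(u_{k,\lambda}|_{\p\Om}, \p_\nu u_{k,\lambda}|_{\p\Om}) \in C_{a_1}^{w,\delta} \subseteq C_{a_2}^{0,C}$.

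Now I exploit the hypothesis: each such $u_{k,\lambda}$ also solves $\Delta u_{k,\lambda} + a_2(x,u_{k,\lambda}) = 0$ in $\Om$ (two solutions of second-order elliptic equations with the same Cauchy data on all of $\p\Om$ and bounded in $W^{2,r}$ coincide — here one uses that the difference solves a linear equation with $L^r$ potential obtained from the mean value theorem applied to $a_2$, together with boundary unique continuation, or simply that the element of $C_{a_2}^{0,C}$ with that Cauchy data can be taken to be $u_{k,\lambda}$ itself up to the same argument). Subtracting the two equations for $u_{k,\lambda}$ gives $a_1(x,u_{k,\lambda}(x)) = a_2(x,u_{k,\lambda}(x))$ for a.e.\ $x$. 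Passing to the limit $k \to \infty$, since $u_{k,\lambda} \to w + \lambda$ locally (in $C^\alpha$, hence pointwise a.e.\ after subsequences) and $a_j$ are continuous in $t$, one obtains $a_1(x, w(x) + \lambda) = a_2(x, w(x) + \lambda)$ a.e.\ in the interior region, for each fixed small $\lambda$; covering $\Om$ and using continuity in $\lambda$ gives the claim for all $\lambda \in (-\eps,\eps)$.

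The main obstacle I anticipate is making the Runge approximation and the nonlinear correction compatible at low regularity. The Runge argument naturally produces approximation in $L^2$ or $H^1$, but the nonlinear fixed-point correction needs control in $W^{2,r}$ (so that $a_1(x,u)$ stays in $L^r$ and $u$ stays $C^\alpha$); bridging this requires elliptic regularity estimates $\norm{v}_{W^{2,r}} \lesssim \norm{\Delta v}_{L^r} + \norm{v}_{\text{bdry}}$ for the potential-perturbed operator, and care that the $L^r$ potential $q_1$ does not destroy invertibility of the linearization on the relevant data spaces (one may need to quotient by, or avoid, Dirichlet eigenvalues, or restrict to boundary data in a fixed finite-codimension subspace — this is where the constant $C$ in $C_{a_2}^{0,C}$ and the freedom in $\delta$ enter). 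A secondary technical point is the unique continuation and boundary determination step for equations with merely $L^r$ coefficients, $r > n/2$, but this is by now classical (Jerison–Kenig, and Heck–Wang type boundary uniqueness), so I expect it to go through, and the genuinely delicate bookkeeping is the quantitative interplay between the approximation scale $k$, the shift $\lambda$, and the radius $\delta$.
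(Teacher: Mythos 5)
The central step in your argument fails. You assert that from $(u_{k,\lambda}|_{\partial\Omega}, \partial_\nu u_{k,\lambda}|_{\partial\Omega}) \in C_{a_2}^{0,C}$ it follows that $u_{k,\lambda}$ \emph{itself} solves $\Delta u_{k,\lambda} + a_2(x,u_{k,\lambda}) = 0$, so that subtracting the two equations gives $a_1(x,u_{k,\lambda}(x)) = a_2(x,u_{k,\lambda}(x))$ pointwise. This is not what the Cauchy data inclusion says. It only says that there exists \emph{some} $u_2 \in W^{2,r}(\Omega)$ with $\Delta u_2 + a_2(x,u_2) = 0$ whose Cauchy data matches that of $u_{k,\lambda}$; a priori $u_2$ is a completely different function in the interior. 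Your appeal to unique continuation via a mean-value-theorem potential does not close this gap: the mean value theorem gives $a_2(x,u_{k,\lambda}) - a_2(x,u_2) = q(x)(u_{k,\lambda}-u_2)$, but $u_{k,\lambda}$ satisfies $\Delta u_{k,\lambda} = -a_1(x,u_{k,\lambda})$, not $-a_2(x,u_{k,\lambda})$, so the difference $u_{k,\lambda}-u_2$ solves
\[
\Delta(u_{k,\lambda}-u_2) + q(x)(u_{k,\lambda}-u_2) = -\bigl(a_1(x,u_{k,\lambda}) - a_2(x,u_{k,\lambda})\bigr),
\]
i.e.\ a linear equation with an inhomogeneity that is precisely the quantity you are trying to prove vanishes. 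Unique continuation from zero Cauchy data applies to the \emph{homogeneous} equation and hence would only give $u_{k,\lambda}=u_2$ if you already knew $a_1 = a_2$; the argument is circular. (Lemma~\ref{lem_carleman_wtwor} of this paper, which looks superficially like what you are invoking, compares two solutions of the \emph{same} nonlinear equation, not solutions of two different ones with the same Cauchy data.)

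This is not a technical detail to be patched: if the interior solutions automatically coincided whenever their Cauchy data agreed, the whole inverse problem would be trivial, since one could simply subtract the two PDEs. The actual proof has to work harder. The paper constructs a $C^1$ solution map $S_{a_1,w}$ parametrizing $a_1$-solutions near $w$ by solutions of the linearized equation $(\Delta + q_1)v = 0$ (Proposition~\ref{prop_first_solution_map}), and separately a second map $T_{a_2,w}$ that assigns to $v$ the $a_2$-solution $u_{2,v}$ with the matching Cauchy data, then proves $T_{a_2,w}$ is also $C^1$ (Proposition~\ref{prop_second_solution_map}, using Lemmas~\ref{lemma_cauchy_data_1}--\ref{lemma_projection_operator}). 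Differentiating both maps and integrating by parts yields the integral identity
\[
\int_{\Omega}\bigl(\partial_u a_1(x,u_{1,v}) - \partial_u a_2(x,u_{2,v})\bigr)v_1 v_2\,dx = 0
\]
for all solutions $v_1,v_2$ of the respective linearized equations (Lemma~\ref{first_lin_ident}). Only then does one invoke completeness of products of solutions of linear Schr\"odinger equations with $L^r$ potentials to deduce $\partial_u a_1(x,u_{1,v}) = \partial_u a_2(x,u_{2,v})$, and a further argument (Lemma~\ref{phi_ind_v}) shows that $u_{2,v}-u_{1,v}$ is independent of $v$ --- it is at \emph{this} stage, as a hard-won conclusion, that the interior solutions are shown to coincide in the common-solution case of Theorem~\ref{thm_main1}. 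Your proposal skips the integral identity and the completeness-of-products ingredient entirely, so it does not engage with the actual difficulty. Your Runge step (approximating a constant by global solutions of $(\Delta+q_1)v=0$ and using it to sweep out $\lambda \in (-\eps,\eps)$) is essentially correct and matches the paper's Section~\ref{sec_runge}, but it only becomes useful after the linearization and completeness argument has established the pointwise identity for $\partial_u a_j$ along the parametrized families of solutions.
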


In particular, if $a_j(x,0) = 0$, we may take $w \equiv 0$ as the common solution and the conclusion is that $a_1(x,\lambda) = a_2(x,\lambda)$ for small $\lambda$. The additional assumption $r \geq 2$ is only needed in dimensions $n \in \{ 2,3 \}$, since for $n \geq 4$ it follows from the condition $r > n/2$.

The second result concerns two general nonlinearities $a_j(x,t)$ that may not have a common solution. In this case one can only determine the nonlinearity up to gauge (see \cite{Sun}). The gauge is given by 
\[
(T_{\varphi} a)(x,t) = \Delta \varphi(x) + a(x,t+\varphi(x))
\]
where $\varphi \in W^{2,r}(\Om)$ satisfies $\varphi|_{\p \Om} = \p_{\nu} \varphi|_{\p \Om} = 0$.

\begin{Theorem} \label{thm_main2}
Let $a_1, a_2 \in L^r(\Om, C^{1,\alpha}(\mR))$ where $r > n/2$, $r \geq 2$ and $\alpha > 0$. Suppose that $w_1 \in W^{2,r}(\Om)$ solves $\Delta w_1 + a_1(x,w_1) = 0$ in $\Om$. If for some $\delta, C > 0$ one has 
\[
C_{a_1}^{w_1,\delta}  \subseteq  C_{a_2}^{0,C},
\]
then there exist $\varphi \in W^{2,r}(\Om)$ with $\varphi|_{\p \Om} = \p_{\nu} \varphi|_{\p \Om} = 0$ and $\eps > 0$ such that 
\[
a_1(x,w_1(x)+\lambda) = (T_{\varphi }a_2)(x,w_1(x)+\lambda) \text{ for a.e.\ $x \in \Omega$ and for all $\lambda \in (-\eps, \eps)$.}
\]
\end{Theorem}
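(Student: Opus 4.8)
The plan is to reduce Theorem \ref{thm_main2} to Theorem \ref{thm_main1} by absorbing the difference between the two problems into a gauge transformation. Since $C_{a_1}^{w_1,\delta} \subseteq C_{a_2}^{0,C}$, the boundary Cauchy data $(w_1|_{\p\Om}, \p_\nu w_1|_{\p\Om})$ belongs to $C_{a_2}^{0,C}$, so there is a solution $w_2 \in W^{2,r}(\Om)$ of $\Delta w_2 + a_2(x,w_2) = 0$ with the \emph{same} Cauchy data as $w_1$ and with $\norm{w_2}_{W^{2,r}(\Om)} \le C$. Set $\varphi := w_1 - w_2$. Then $\varphi \in W^{2,r}(\Om)$ and $\varphi|_{\p\Om} = \p_\nu \varphi|_{\p\Om} = 0$, so $\varphi$ is an admissible gauge function. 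A direct computation shows that $\ol{a}_2 := T_\varphi a_2$ satisfies $\Delta w_1 + \ol a_2(x,w_1) = \Delta(w_1 - \varphi) + \Delta\varphi + a_2(x, w_1 - \varphi + \varphi) = \Delta w_2 + a_2(x,w_2) = 0$, so $w_1$ is a common solution of $\Delta u + a_1(x,u) = 0$ and $\Delta u + \ol a_2(x,u) = 0$.

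Next I would check that $\ol a_2$ lies in the class to which Theorem \ref{thm_main1} applies and that its Cauchy data sets are comparable to those of $a_2$. Regularity is immediate: $\ol a_2(x,t) = \Delta\varphi(x) + a_2(x,t+\varphi(x))$, and since $\varphi \in W^{2,r}(\Om)$ we have $\Delta\varphi \in L^r(\Om)$, while $x \mapsto a_2(x,\,\cdot\, + \varphi(x))$ stays in $L^r(\Om, C^{1,\alpha}(\mR))$ because translation in $t$ is an isometry of $C^{1,\alpha}(\mR)$ and $\varphi$ is a fixed bounded function (using the $W^{2,r} \hookrightarrow C^\alpha \hookrightarrow L^\infty$ Sobolev embedding for $r > n/2$). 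For the Cauchy data, note that $u \mapsto u + \varphi$ is a bijection from solutions of $\Delta u + a_2(x,u) = 0$ to solutions of $\Delta v + \ol a_2(x,v) = 0$ which, since $\varphi$ has vanishing Cauchy data, preserves boundary Cauchy data and changes $W^{2,r}(\Om)$ norms by at most $\norm{\varphi}_{W^{2,r}(\Om)}$. Hence $C_{a_1}^{w_1,\delta} \subseteq C_{a_2}^{0,C} = C_{\ol a_2}^{\varphi, C} \subseteq C_{\ol a_2}^{0, C + \norm{\varphi}_{W^{2,r}(\Om)}}$, so the hypothesis of Theorem \ref{thm_main1} holds with $a_2$ replaced by $\ol a_2$, common solution $w = w_1$, and a possibly larger constant.

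Applying Theorem \ref{thm_main1} then yields $\eps > 0$ with $a_1(x, w_1(x) + \lambda) = \ol a_2(x, w_1(x) + \lambda) = (T_\varphi a_2)(x, w_1(x)+\lambda)$ for a.e.\ $x$ and all $\lambda \in (-\eps,\eps)$, which is exactly the claimed conclusion. The only genuine point requiring care is producing the solution $w_2$: I must make sure that the inclusion $C_{a_1}^{w_1,\delta} \subseteq C_{a_2}^{0,C}$ really forces the existence of \emph{some} $a_2$-solution sharing the full Cauchy data of $w_1$ — this is immediate from the definition of $C_{a_1}^{w_1,\delta}$ since $w_1$ itself (being within distance $0 \le \delta$ of $w_1$) contributes its Cauchy data to that set. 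Everything else is bookkeeping about how the affine substitution $u \leftrightarrow u + \varphi$ acts on the equation, the nonlinearity class, and the Cauchy data sets; I expect no real obstacle beyond being careful that all norm bounds are uniform in the relevant sense so that the hypotheses of Theorem \ref{thm_main1} are met verbatim.
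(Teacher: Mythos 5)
Your plan — absorb the difference $w_2-w_1$ into a gauge so that $w_1$ becomes a common solution and Theorem~\ref{thm_main1} applies — is a correct observation that the two theorems are equivalent up to the gauge $T_\varphi$. But as a proof of Theorem~\ref{thm_main2} it is circular in this paper: Theorem~\ref{thm_main1} is not proved independently; it is derived from Theorem~\ref{thm_main2} as the special case $w_2=w_1$. The paper proves Theorem~\ref{thm_main2} directly via Lemma~\ref{first_lin_ident} (first-order linearization plus completeness of products of solutions), Lemma~\ref{phi_ind_v} (the difference $u_{2,v}-u_{1,v}$ is a fixed function $\varphi$ independent of $v$), and the Runge approximation of Section~\ref{sec_runge} to cover a full interval $(-\eps,\eps)$ of values of $\lambda$. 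None of that work is avoided by your reduction; a non-circular version of your route would have to prove Theorem~\ref{thm_main1} from scratch, which needs exactly the same solution maps $S_{a_1,w}$ and $T_{a_2,w}$, the integral identity, unique continuation, and the Runge argument. So the gauge reduction reorganizes the exposition but does not replace the analytic core, and as written it leaves Theorem~\ref{thm_main2} unproved.

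There are also sign errors worth flagging. With the paper's convention $(T_\varphi a)(x,t) = \Delta\varphi(x) + a(x,t+\varphi(x))$, one must take $\varphi = w_2 - w_1$ (the remark in the introduction has a typo here), giving
\begin{equation*}
\Delta w_1 + (T_\varphi a_2)(x,w_1) = \Delta w_1 + \Delta(w_2-w_1) + a_2(x,w_1+\varphi) = \Delta w_2 + a_2(x,w_2) = 0,
\end{equation*}
whereas your choice $\varphi = w_1 - w_2$ yields $2\Delta w_1 - \Delta w_2 + a_2(x,2w_1-w_2)$, which does not vanish. Your intermediate step writes $a_2(x,w_1-\varphi+\varphi)$, which is just $a_2(x,w_1)$, not $a_2(x,w_2)$. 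Similarly, the Cauchy-data-preserving bijection from $a_2$-solutions to $T_\varphi a_2$-solutions is $u\mapsto u-\varphi$, not $u\mapsto u+\varphi$, and it identifies $C_{a_2}^{0,C}$ with $C_{T_\varphi a_2}^{-\varphi,C}\subseteq C_{T_\varphi a_2}^{0,\,C+\norm{\varphi}_{W^{2,r}(\Om)}}$. With these corrections the reduction step itself is sound, but the circularity above is the real gap.
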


In Theorem \ref{thm_main2}, $\varphi = w_1-w_2$ where $w_2$ is the unique solution of $\Delta w_2 + a_2(x,w_2) = 0$ in $\Om$ that has the same Cauchy data as $w_1$ (see Lemma \ref{lem_carleman_wtwor}). If $w_2=w_1$, we obtain Theorem \ref{thm_main1} as a special case.

We note that our results cover the linear case $a(x,u) = q(x)u$, and hence include the recovery of an $L^r(\Omega)$ potential where $r > n/2$ and $r \geq 2$. This misses the endpoint result $r=n/2$ for $n \geq 3$, see \cites{Chanillo1990, Nachman1992, DKS2013}, and also the result for $L^{4/3+\eps}$ potentials for $n=2$ \cite{Blåsten2020}. The exponent $r=n/2$ for $n \geq 3$ may be considered optimal in the context of standard wellposedness theory for the Dirichlet problem with $L^p$ potentials, and also for the strong unique continuation principle to hold \cite{JerisonKenig1985}. Another low regularity result for $n=2$, under a condition on $\p_t a(x,t)$ that ensures well-posedness, is given in \cite{IN}.

We also remark that it is not in general possible to determine $a(x,t)$ in $\Omega \times \mR$, see \cite{IS, FKO23}. In general one could expect to determine $a(x,t)$ in the reachable set $\{ (x, u(x)) \,:\, x \in \Omega, \ \Delta u + a(x,u) = 0 \text{ in $\Om$} \}$. Our methods imply that the reachable set is always an open set. Conditions for existence of solutions $u$ may be found in \cite[Section 14.1 and Exercise 8]{TaylorPDE3}.

\subsection*{Methods}

The main results are low regularity versions of the corresponding results in \cite{JNS2023}, and we will prove them by a method based on first linearization. The linearization of the equation at a solution $w$ is 
\[
\Delta v + qv = 0 \text{ in $\Om$}
\]
where $q(x) = \p_u a(x,w(x))$. Since we do not assume well-posedness, there may be a finite dimensional obstruction for solving the Dirichlet problem for this equation. In \cite{JNS2023} we employed a solvability result where the Dirichlet data was modified by a function in $\p_{\nu} N_q$ where $N_q$ is the eigenspace corresponding to zero eigenvalue. The reason for requiring that $a(x,t)$ is $C^{1,\alpha}$ in $x$ in \cite{JNS2023} was that $\p_{\nu} N_q$ is not in the natural trace space of solutions if $a$ has low regularity. In this article we replace the space $\p_{\nu} N_q$ with another space $D_q$ of the same dimension. This makes it possible to work with nonlinearities $a(x,t)$ that have only $L^r$ regularity in $x$.

After establishing solvability for the linearized equation, the next step is to construct a $C^1$ map that maps a small solution $v$ of the linearized equation $\Delta v + qv = 0$ to a solution $u = S(v) = S_{a,w}(v)$ of the nonlinear equation $\Delta u + a(x,u) = 0$, so that 
\[
u = w + v + o(\norm{v})
\]
where $w$ is a fixed solution of $\Delta w + a(x,w) = 0$. In \cite{JNS2023} the existence of $S$ was shown via a Banach fixed point argument, and the fact that $S$ is $C^1$ was proved by using the implicit function theorem. In this work we establish both the existence and smoothness of $S$ by a single application of the implicit function theorem, which leads to a much shorter argument.

Next we consider the setting of Theorem \ref{thm_main1}  and employ the Cauchy data set inclusion $C_{a_1}^{w,\delta}  \subseteq  C_{a_2}^{0,C}$ to define a map $T_{a_2,w}$, which takes a solution $v$ of the linearized equation $\Delta v + \p_u a(x,w(x))v = 0$ to a solution $u$ of $\Delta u + a_2(x,u) = 0$ that has the same Cauchy data as $S_{a_1,w}(v)$. Since $T_{a_2,w}$ is defined via the Cauchy data inclusion, we do not know if it is $C^1$. However, by another argument based on the implicit function theorem we prove that $T_{a_2,w}$ is indeed $C^1$. The argument involves a unique continuation result that leads to Lemma \ref{lem_carleman_wtwor}, and a certain projection operator defined in Lemma \ref{lemma_projection_operator} via a fourth order equation. Lemma \ref{lemma_projection_operator} is the only place where we need the additional condition $r \geq 2$ for $n \in \{2,3\}$.

After having proved that both maps $S_{a_1,w}$ and $T_{a_2,w}$ are $C^1$, the argument for recovering the nonlinearity proceeds as in \cite{JNS2023}. We first derive an integral identity involving the difference of potentials in the linearized equations. Then we invoke the completeness of products of solutions to linear Schr\"odinger equations \cites{Chanillo1990, Nachman1992, DKS2013, Blåsten2020} and a unique continuation argument to show that 
\[
\p_u a_1(x, S_{a_1,w}(v)) = \p_u a_2(x, S_{a_1,w}(v))
\]
for small solutions $v$ of the linearized equation for $a_1$. It remains to show that there is $\eps$ such that for any $x_0 \in \Om$ and any $\lambda \in (-\eps,\eps)$, one can find $v$ such that $S_{a_1,w}(v)(x_0) = \lambda$.  This follows from a Runge approximation argument for the linearized equation with $L^r$ potentials.

This article is structured as follows. Section \ref{sec_intro} is the introduction, Section \ref{sec_solvability_linear} studies solvability and regularity for the linearized equation, and Section \ref{sec_first_sol_map} presents the solution map $S_{a,w}$. The second solution map $T_{a_2,w}$ is studied in Section \ref{sec_second_sol_map_new}. Finally, the main theorems are proved in Section \ref{sec_proof_of_main} and the required Runge approximation result in Section \ref{sec_runge}.

\subsection*{Acknowledgements}

The authors are partly supported by the Research Council of Finland (Centre of Excellence in Inverse Modelling and Imaging and FAME Flagship, grants 353091 and 359208). J.N.\ is also supported by the Research Council of Finland (Flagship of Advanced Mathematics for Sensing Imaging and Modelling grant 359183) and by the Emil Aaltonen Foundation.

\section{Solvability for linear equations} \label{sec_solvability_linear}

Throughout this article we assume that $\Omega$ is a bounded connected open subset of $\mR^n$, $n \geq 2$, with smooth boundary. Let $q\in L^{r}(\Om)$ where $r > n/2$, and define 
\begin{equation*}
    N_q:=\{\psi\in H^1_0(\Om): (\Delta+q)\psi=0\}.
\end{equation*}
%In the lemma below, if $q \in C^{1,\alpha}(\ol{\Om})$ one can choose $D_q = \p_{\nu} N_q$.

\begin{Proposition}  \label{prop_linearized_ri_wonep}
Let $q \in L^{r}(\Om)$ where $r > n/2$. There is a subspace $D_q = \mathrm{span}\{h_1, \ldots, h_m\}$ of $W^{2-1/r,r}(\p \Om)$ with $\dim(D_q) = \dim(N_q) < \infty$ such that for any $F \in L^r(\Om)$ and $f \in W^{2-1/r,r}(\p \Om)$, there is a unique function $\Phi = \Phi(F,f) \in D_q$ such that the problem 
\begin{equation}\label{eq_schrodinger1}
\begin{cases}
                \Delta u + qu = F\quad&\text{in }\Omega, \\

                u = f + \Phi \quad&\text{on }\partial\Omega,
\end{cases}
\end{equation}
admits a solution $u \in W^{2,r}(\Om)$. If $\{ \psi_1, \ldots, \psi_m \}$ is a suitable basis of $N_q$, the function $\Phi$ is given by
\begin{equation}\label{eq_finite_rank_map}
                \Phi(F, f) = \sum_{j=1}^m \left(\int_{\Om}F\psi_j\,dx + \int_{\partial\Om} f\partial_{\nu}\psi_j\,dS\right) h_j.
\end{equation}

Moreover, there is unique solution $u_{F,f} = G_q(F,f)$ such that $u_{F,f} \perp N_q$ where $\perp$ means $L^2$-orthogonal. The solution $u_{F,f}$ depends linearly on $F$ and $f$ and satisfies
\begin{equation}\label{uff_estimate}
\norm{u_{F,f}}_{W^{2,r}(\Om)} \leq C(\norm{F}_{L^r(\Om)} + \norm{f}_{W^{2-1/r,r}(\p \Om)}),
\end{equation}
where $C$ is independent of $F$ and $f$.
\end{Proposition}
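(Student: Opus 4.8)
The plan is to reduce \eqref{eq_schrodinger1} to a standard Fredholm problem for an operator that has closed range and finite-dimensional kernel, and then to correct the boundary data by a finite-dimensional space to land in the range. First I would homogenize the boundary condition: given $f \in W^{2-1/r,r}(\p\Om)$, pick an extension $E f \in W^{2,r}(\Om)$ with $\norm{Ef}_{W^{2,r}(\Om)} \lesssim \norm{f}_{W^{2-1/r,r}(\p\Om)}$ (such a bounded extension exists since $W^{2-1/r,r}(\p\Om) = B^{2-1/r}_{rr}(\p\Om)$ is precisely the trace space). Writing $u = Ef + v$, the problem becomes $(\Delta + q)v = F - (\Delta+q)(Ef) =: \tilde F$ in $\Om$ with $v \in W^{2,r}(\Om)$, $v|_{\p\Om} = \Phi$ for some $\Phi$ to be chosen in a finite-dimensional space. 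Note $\tilde F \in L^r(\Om)$ because $\Delta(Ef) \in L^r$ and $q(Ef) \in L^r$ (using $Ef \in C^\alpha(\ol\Om) \subseteq L^\infty$ by Sobolev embedding, as $r > n/2$, so $q\cdot Ef \in L^r$ with the appropriate bound).

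Next I would set up the Fredholm theory. Consider the bounded operator
\[
L \colon \{ v \in W^{2,r}(\Om) : v|_{\p\Om} = 0\} \longrightarrow L^r(\Om), \qquad L v = \Delta v + q v.
\]
Since $q \in L^r$ with $r > n/2$, the multiplication $v \mapsto qv$ is a compact operator from $W^{2,r}_0$-type space into $L^r$ (it factors through $W^{2,r} \hookrightarrow C^\alpha \hookrightarrow\hookrightarrow$ and multiplication by an $L^r$ function maps $C^\alpha$ boundedly into $L^r$; compactness of the Sobolev embedding does the job), while $\Delta$ with zero Dirichlet data is an isomorphism from the homogeneous space onto $L^r(\Om)$ by standard $W^{2,r}$-elliptic regularity for the Laplacian. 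Hence $L$ is a Fredholm operator of index zero. Its kernel, after checking elliptic regularity bootstrap, is exactly $N_q$ (any $H^1_0$ solution of $(\Delta+q)\psi = 0$ is automatically in $W^{2,r}$), so $\dim N_q = m < \infty$, and $\ran L$ is closed of codimension $m$. By the Fredholm alternative, $\ran L = (N_{q}^{*})^{\perp}$ in the appropriate duality; since the operator $\Delta + q$ is formally self-adjoint and $q$ real (or, more carefully, since $\dim\ker L = \dim\ker L^{*}$ and one identifies $\ker L^{*}$ with $N_q$ via the $L^2$ pairing extended to $L^r$--$L^{r'}$), one gets that $G \in L^r(\Om)$ lies in $\ran L$ iff $\int_\Om G \psi_j \, dx = 0$ for a basis $\psi_1, \dots, \psi_m$ of $N_q$.

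Now I would construct $D_q$. Choose $h_1, \dots, h_m \in W^{2-1/r,r}(\p\Om)$ such that the $m \times m$ matrix $\left( \int_{\p\Om} h_i \, \p_\nu \psi_j \, dS \right)_{i,j}$ is invertible — this is possible because the functionals $h \mapsto \int_{\p\Om} h\, \p_\nu \psi_j\, dS$ on $W^{2-1/r,r}(\p\Om)$ are linearly independent (if a nontrivial combination $\sum c_j \p_\nu\psi_j$ vanished as a functional, then $\sum c_j \psi_j \in N_q$ would have vanishing Cauchy data and hence be zero by unique continuation for $(\Delta+q)$ with $L^r$ potential, $r \geq n/2$, contradicting linear independence of the $\psi_j$; here one may also absorb $h_j$ into $\mathrm{span}\{\p_\nu\psi_j\}$ if $a$ were smoother, but in general one just picks smooth representatives realizing the pairing). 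After rescaling the basis $\{\psi_j\}$ and the $\{h_j\}$ we may arrange $\int_{\p\Om} h_i\, \p_\nu\psi_j\, dS = \delta_{ij}$. For $\Phi = \sum_k c_k h_k \in D_q$, the solvability condition for \eqref{eq_schrodinger1} (in the homogenized form, with the extension $E(f+\Phi)$) reads: for each $j$,
\[
\int_\Om F \psi_j\, dx - \int_\Om (\Delta + q)\big(E(f+\Phi)\big)\psi_j\, dx = 0.
\]
Integrating by parts, $\int_\Om (\Delta+q)(E g)\,\psi_j\, dx = \int_{\p\Om} g\, \p_\nu\psi_j\, dS$ (the boundary term from $\psi_j$ vanishes since $\psi_j \in H^1_0$, and $\int_\Om Eg\,(\Delta+q)\psi_j = 0$), so the condition becomes $\int_\Om F\psi_j\,dx - \int_{\p\Om}(f+\Phi)\p_\nu\psi_j\,dS = 0$, i.e.
\[
c_j = \int_\Om F\psi_j\, dx - \int_{\p\Om} f\, \p_\nu\psi_j\, dS \quad\text{wait, sign: } \Phi \text{ chosen so } \int_{\p\Om}\Phi\,\p_\nu\psi_j\,dS = c_j = \int_\Om F\psi_j\,dx - \int_{\p\Om} f\,\p_\nu\psi_j\,dS.
\]
Comparing with \eqref{eq_finite_rank_map}, one absorbs the sign by replacing $h_j$ with $-h_j$ or redefining $\psi_j$; the upshot is a unique $\Phi = \Phi(F,f)$ given by a formula of the stated shape, manifestly linear in $(F,f)$ and bounded $W^{2-1/r,r}(\p\Om) \times L^r(\Om) \to D_q$.

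Finally, for the solvability statement: with this $\Phi$, the equation $Lv = \tilde F$ (homogenized form) has a solution; the affine solution set is $v_0 + N_q$, and there is a unique representative with $v_0 \perp N_q$ in $L^2$, since $N_q$ is finite-dimensional and $L^2$-closed. Translating back, $u_{F,f} := E(f+\Phi) + v_0$ is the unique $W^{2,r}$ solution of \eqref{eq_schrodinger1} orthogonal to $N_q$ (orthogonality is unaffected by the choice within $v_0 + N_q$ once we project). Linearity in $(F,f)$ is clear from linearity of $E$, $\Phi$, and the bounded partial inverse of $L$ restricted to $N_q^\perp \cap \ran L$; the latter inverse is bounded by the open mapping theorem / closed range, giving \eqref{uff_estimate} after combining with the extension bound and the $L^r$ bound on $\tilde F$.

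\textbf{Main obstacle.} The delicate point is the identification $\ran L = \{ G : \int G\psi_j = 0\}$ and the non-degeneracy of the pairing $\int_{\p\Om} h_i \p_\nu\psi_j\,dS$ at low regularity: one must know that $\p_\nu\psi_j$ makes sense and pairs nontrivially with the boundary trace space, and that it is nonzero, which rests on the weak unique continuation principle for $\Delta + q$ with $q \in L^r$, $r \geq n/2$ (so that a solution with zero Cauchy data vanishes, and distinct normal derivatives come from distinct eigenfunctions). Getting the duality exactly right — i.e. that codim of $\ran L$ equals $\dim N_q$ with the pairing realized on $L^2(\Om)$ rather than on a larger dual space — requires the elliptic regularity bootstrap $H^1_0 \cap \ker(\Delta+q) \subseteq W^{2,r}(\Om)$, which again uses $r > n/2$ and Sobolev embedding to control the lower-order term. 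Once these are in place, everything else is bookkeeping with bounded extension operators and the open mapping theorem.
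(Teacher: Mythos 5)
Your proposal is correct, and its broad outline (homogenize the boundary data, use Fredholm theory for $\Delta+q$ with zero Dirichlet data, characterize the range by orthogonality to $N_q$, then adjust the boundary data in a finite-dimensional space chosen to pair non-degenerately with $\p_\nu\psi_j$) matches the paper. There are two genuine deviations worth noting. First, your construction of $D_q$ is abstract: you pick any $h_1,\dots,h_m \in W^{2-1/r,r}(\p\Om)$ making the pairing matrix $(\int_{\p\Om} h_i\,\p_\nu\psi_j\,dS)$ invertible, which exists because the functionals $h \mapsto \int_{\p\Om} h\,\p_\nu\psi_j\,dS$ are linearly independent (by unique continuation for $(\Delta+q)$ with $L^r$ potential). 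The paper instead constructs $h_j = u_j|_{\p\Om}$ explicitly, where $u_j \in W^{2,r}(\Om)$ solves $(\Delta+q)u_j = \psi_j$; this requires extending $q$ and $\psi_j$ to a larger ball on which $0$ is not a Dirichlet eigenvalue (using strict monotonicity of Dirichlet eigenvalues). Integrating by parts shows $\int_{\p\Om}u_j|_{\p\Om}\,\p_\nu\psi_k\,dS = -\delta_{jk}$, so the paper's choice automatically has the non-degeneracy and produces the exact signs in \eqref{eq_finite_rank_map}; your sign flip after renormalizing is the same thing. Your abstract choice is shorter and avoids the detour via the larger domain; the paper's constructive choice delivers the closed-form solution $u = v + \sum a_j u_j$ and concrete generators for $D_q$. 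Second, you run Fredholm theory directly at the $W^{2,r}\cap W^{1,r}_0 \to L^r$ level (compactness of $v \mapsto qv$ through $W^{2,r} \hookrightarrow\hookrightarrow L^\infty \to L^r$, index zero by perturbing $\Delta$). The paper instead establishes solvability in $H^1_0$ via the Fredholm alternative for $\mathrm{Id}+\lambda T^{-1}j$ with $T = \Delta+q-\lambda$, and then upgrades regularity through Lemma \ref{lemma_regularity_linear}; this is slightly longer but sidesteps identifying the annihilator of the range in $(L^r)^*$. Your shortcut avoiding that identification — namely that $\ran L \subseteq \{G : \int G\psi_j = 0\}$ by integration by parts, both have codimension $m$, hence equal — is clean and works, but you should state it explicitly rather than appeal to formal self-adjointness of the adjoint. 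Everything else (uniqueness of $\Phi$, uniqueness of the $N_q$-orthogonal solution, the open mapping theorem for \eqref{uff_estimate}) agrees with the paper.
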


We begin with a standard regularity result. We will write $L^{p+} := \bigcup_{t > p} L^t$. In this article we will frequently use the Sobolev embedding 
\[
W^{s,p}(\Om) \subseteq \left\{ \begin{array}{cc} L^{\frac{np}{n-sp}}(\Om), & s < n/p, \\[3pt] \bigcap_{t < \infty} L^t(\Om), & s = n/p, \\[3pt] C^{1-n/p}(\ol{\Om}), & s > n/p, \end{array} \right.
\]
as well as the generalized H\"older inequality 
\[
\int uvw \,dx \leq \norm{u}_{L^{p_1}} \norm{v}_{L^{p_2}} \norm{w}_{L^{p_3}}, \qquad \frac{1}{p_1} + \frac{1}{p_2} + \frac{1}{p_3} = 1.
\]

\begin{Lemma}  \label{lemma_regularity_linear}
Let $q \in L^{r}(\Om)$ where $r > n/2$. Then any $u \in H^1(\Omega)$ solving 
\begin{equation*}
\begin{cases}
                \Delta u + qu = F\quad&\text{in }\Omega, \\

                u = f \quad&\text{on }\partial\Omega,
\end{cases}
\end{equation*}
where $F \in L^r(\Omega)$ and $f \in W^{2-1/r,r}(\p \Omega)$, must satisfy $u \in W^{2,r}(\Om)$. Moreover, $N_q  \subseteq  W^{2,r}(\Om)$ is finite dimensional.
\end{Lemma}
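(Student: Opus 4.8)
The plan is a standard elliptic bootstrap, after reducing to homogeneous boundary data, followed by a Fredholm argument for $N_q$. Fix an extension $E \in W^{2,r}(\Om)$ with $E|_{\p\Om} = f$, which exists because $W^{2-1/r,r}(\p\Om)$ is by definition the trace space of $W^{2,r}(\Om)$. Then $\tilde u := u - E \in H^1_0(\Om)$ solves $\Delta \tilde u = F - qu - \Delta E$ in $\Om$ with $\tilde u|_{\p\Om} = 0$. The tool I will iterate is the standard Calder\'on--Zygmund estimate: for $1 < s < \infty$, if $G \in L^s(\Om)$ then the unique $H^1_0$-solution of $\Delta w = G$, $w|_{\p\Om} = 0$, lies in $W^{2,s}(\Om)$ (the theory produces a $W^{2,s}$-solution, and uniqueness of $H^1_0$-solutions identifies it with $w$). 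Since I only ever use this with zero boundary data and with $1 < s \le r$, no question about intermediate trace spaces arises.

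Now I iterate on the integrability of $u$. Since $u \in H^1(\Om)$, Sobolev embedding gives $u \in L^{2n/(n-2)}(\Om)$ when $n \ge 3$ and $u \in L^t(\Om)$ for all $t < \infty$ when $n = 2$. Suppose $u \in L^p(\Om)$. As $q \in L^r$, H\"older gives $qu \in L^s$ with $\tfrac1s = \tfrac1r + \tfrac1p$, so $s < r$ and the right-hand side $F - qu - \Delta E$ lies in $L^s$ (using $F, \Delta E \in L^r \subseteq L^s$); moreover $s > 1$ in the worst case $p = 2n/(n-2)$, since $\tfrac1s = \tfrac1r + \tfrac{n-2}{2n} < \tfrac2n + \tfrac{n-2}{2n} = \tfrac{n+2}{2n} < 1$ for $n \ge 3$. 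Hence $\tilde u \in W^{2,s}(\Om)$, so $u = \tilde u + E \in W^{2,s}(\Om)$, and Sobolev embedding upgrades $u$ to $L^{p'}$ with $\tfrac1{p'} = \tfrac1s - \tfrac2n = \tfrac1p - \bigl(\tfrac2n - \tfrac1r\bigr)$ whenever $s < n/2$. Since $r > n/2$, the gain $\tfrac2n - \tfrac1r$ is a fixed positive constant, so after finitely many steps we reach $u \in L^t(\Om)$ for all $t < \infty$. Then $qu \in L^s$ for every $s < r$; choosing $n/2 < s < r$ gives $u \in W^{2,s}(\Om) \hookrightarrow L^\infty(\Om)$, hence $qu \in L^r(\Om)$, and a final application of the estimate gives $u \in W^{2,r}(\Om)$. (When $n = 2$ the iteration is unnecessary: $u \in H^1 \subseteq L^t$ for all $t$, so the last two sentences already conclude.)

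For the remaining claim, applying the above with $F = f = 0$ shows $N_q \subseteq W^{2,r}(\Om)$. Writing $A$ for the inverse of the Dirichlet Laplacian, $\psi \in N_q$ if and only if $\psi = A(q\psi)$, and $K : \psi \mapsto A(q\psi)$ is compact on $H^1_0(\Om)$: it factors through the compact embedding $H^1_0 \hookrightarrow\hookrightarrow L^p$ (with $p$ slightly below $2n/(n-2)$ when $n \ge 3$, any $p < \infty$ when $n = 2$), bounded multiplication by $q$ into $L^s$ with $\tfrac1s = \tfrac1r + \tfrac1p < \tfrac{n+2}{2n}$ (resp.\ $< 1$ for $n = 2$), the continuous embedding $L^s \hookrightarrow H^{-1}(\Om)$, and the bounded map $A : H^{-1} \to H^1_0$. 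By the Fredholm alternative $N_q = \ker(I - K)$ is finite dimensional.

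I do not expect a genuine obstacle; the only care needed is keeping every Lebesgue exponent strictly between $1$ and $\infty$ throughout the bootstrap — the tight spots being the first step and the threshold $s = n/2$ — which the displayed inequalities handle, while reducing to zero boundary data removes any worry about which trace space $f$ occupies at intermediate stages.
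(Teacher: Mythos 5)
Your proof is correct and follows essentially the same bootstrap strategy as the paper: reduce to zero boundary data, iterate elliptic $W^{2,s}$ regularity with H\"older to raise the integrability of $u$ until $qu \in L^r$, using that $r > n/2$ gives a fixed positive gain per step. The only cosmetic difference is in the finite-dimensionality argument, where you invoke the Fredholm alternative for $I - K$ with $K\psi = A(q\psi)$ compact on $H^1_0$, while the paper shows directly that the identity on $N_q$ (in the $L^t$-norm, $t = 2r' < \tfrac{2n}{n-2}$) is compact via the energy estimate and compact Sobolev embedding; both are standard compactness arguments and equally valid.
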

\begin{proof}
By using the right inverse of the trace, this reduces to showing that any $w \in H^1_0(\Omega)$ solving 
\begin{equation*}
\begin{cases}
                \Delta w + qw = G\quad&\text{in }\Omega, \\

                w = 0 \quad&\text{on }\partial\Omega,
\end{cases}
\end{equation*}
where $G \in L^r(\Omega)$, must satisfy $w \in W^{2,r}(\Omega)$. Rewrite the equation as 
\[
\Delta w = G - qw.
\]
If $n=2$, then by Sobolev embedding $w \in W^{1,2}  \subseteq  L^t$ for any $t < \infty$ and therefore $qw \in L^{n/2+}$. Thus $G-qw \in L^{n/2+}$, so $w \in W^{2,n/2+}$ by \cite[Theorem 0.3]{JerisonKenig}. By Sobolev embedding again $w \in L^{\infty}$, so $G - qw \in L^r$ and thus $w \in W^{2,r}$ by \cite[Theorem 0.3]{JerisonKenig}. The proof for $n \geq 3$ is similar, but the initial Sobolev embedding $W^{1,2}  \subseteq  L^{\frac{2n}{n-2}}$ only gives $w \in W^{1,2+\delta}$ for some $\delta > 0$. However, bootstrapping this regularity argument finitely many times yields $w \in W^{1,t}$ for some $t > n$, and then the argument above gives $w \in W^{2,r}$. We omit the details.

The finite dimensionality of $N_q$ is standard when $q \in L^{\infty}$. Under the current assumption $q \in L^r$, if $\psi \in N_q$, then the weak formulation of the equation implies 
\[
\int_{\Om} |\nabla \psi|^2 \,dx = \int_{\Om} q \psi^2 \,dx \leq \norm{q}_{L^r} \norm{\psi}_{L^t}^2
\]
where $1/r+1/t+1/t = 1$, i.e.\ $t = 2r' < \frac{2n}{n-2}$. If we equip $N_q$ with the $L^t(\Omega)$ norm, then any bounded sequence in $N_q$ is also bounded in the $H^1$ norm and by compact Sobolev embedding has a convergent subsequence in the $L^t$ norm since $t < \frac{2n}{n-2}$. Thus the identity map on $N_q$ is compact, which implies that $N_q$ must be finite dimensional.
\end{proof}

\begin{proof}[Proof of Proposition \ref{prop_linearized_ri_wonep}]
Let $\{ \psi_1, \ldots, \psi_m \}$ be an orthonormal basis of $N_q$ with respect to the inner product in $L^2(\Om)$. Given $F \in L^r(\Om)$ and $f \in W^{2-1/r,r}(\p \Om)$, we first solve
\begin{equation}\label{eq_schrödinger21}
\begin{cases}
                \Delta v + qv = F - \sum a_k \psi_k \quad&\text{in }\Omega, \\

                v = f \quad&\text{on }\partial\Omega,
\end{cases}
\end{equation}
for suitable $a_j \in \mR$. The compatibility conditions are obtained by integrating the equation against $\psi_j$, i.e.
\[
a_j = a_j(F,f) = \int_{\Om} F \psi_j \,dx + \int_{\p \Om} f \p_{\nu} \psi_j \,dS.
\]
With this choice of $a_j$, we first find a weak solution $v \in H^1(\Om)$ of \eqref{eq_schrödinger21} (note that $f \in W^{2-1/r,r}(\p \Om) \subseteq H^{1/2}(\p \Om)$ by Sobolev embedding). If $q \in L^{\infty}(\Om)$ this is standard and follows from \cite[Theorem 8.6]{GT}. If $q \in L^r(\Om)$ one first reduces the problem to the case $f=0$ by using a right inverse of the trace. If $\lambda > 0$ is sufficiently large, the operator $T: H^1_0(\Om) \to H^{-1}(\Om), Tv = (\Delta + q - \lambda)v$ is an isomorphism (see e.g.\ \cite[Appendix A]{DKS2013}), and the problem becomes equivalent with 
\[
v + \lambda T^{-1} j(v) = T^{-1}(F - \sum a_k \psi_k)
\]
where $j: H^1_0(\Om) \to H^{-1}(\Om)$ is the natural inclusion. Since $j$ is compact, the Fredholm alternative holds for the above equation, and there is a solution $v$ if and only if $T^{-1}(F - \sum a_k \psi_k)$ is orthogonal to the kernel of $(\mathrm{Id} + \lambda T^{-1} j)^*$. But this is equivalent with $F - \sum a_k \psi_k$ being $L^2$-orthogonal to $N_q$, which yields the existence of a weak solution $v \in H^1(\Om)$. Lemma \ref{lemma_regularity_linear} then shows that $v \in W^{2,r}(\Om)$. 

We have now proved that with the given choice of $a_j$, \eqref{eq_schrödinger21} has a solution $v \in W^{2,r}(\Om)$. We next choose $u_j \in W^{2,r}(\Om)$ solving $(\Delta+q)u_j = \psi_j$, $1 \leq j \leq m$, and set  $D_q = \mathrm{span}\{ u_1|_{\p \Om}, \ldots, u_m|_{\p \Om} \}$. To see that such $u_j$ exist, it is enough to choose $B_r$ to be a large ball with $\ol{\Om}  \subseteq  B_r$ such that $0$ is not a Dirichlet eigenvalue of $\Delta+q$ in $B_r$, and to solve 
\begin{equation*} %\label{eq_schrödinger2}
\begin{cases}
                \Delta \tilde{u}_j + \tilde{q} \tilde{u}_j = \tilde{\psi}_j \quad&\text{in }B_r, \\

                \tilde{u}_j = 0 \quad&\text{on }\partial B_r,
\end{cases}
\end{equation*}
where $\tilde{q}$ and $\tilde{\psi}_j$ are the extensions of $q$ and $\psi_j$ by zero to $B_r$. The fact that one can arrange $0$ not to be a Dirichlet eigenvalue follows from strict monotonicity of Dirichlet eigenvalues with respect to increasing the domain, see the argument in \cite{Leis1967}. The solutions $\tilde{u}_j$ are in $W^{2,r}(B_r)$ by Lemma \ref{lemma_regularity_linear}, and we can take $u_j = \tilde{u}_j|_{\Om}$.

We also show that $\dim(D_q) = m$: if $\sum c_j u_j|_{\p \Om} = 0$, then $u = \sum c_j u_j$ satisfies $(\Delta+q)u = \psi$ with $u|_{\p \Om} = 0$ where $\psi = \sum c_j \psi_j$. Integrating the equation against $\psi$ gives $\int \psi^2 \,dx = 0$, so $\psi = 0$ and hence $c_1 = \ldots = c_m = 0$ since $\psi_j$ are linearly independent. Thus $\dim(D_q) = m$.

Now that we have obtained the functions $u_j$, we obtain a solution $u$ to \eqref{eq_schrodinger1} via
\[
u = v + \sum a_j u_j.
\]
Moreover, any solution to \eqref{eq_schrodinger1} is of the form $u + \psi$ for some $\psi \in N_q$.

Next we show that the function $\Phi$ is unique. Assume we have two solutions $u_1, u_2$ to \eqref{eq_schrodinger1} with the same data $F,f$ but with $\Phi_1, \Phi_2$ respectively. Then $u = u_1-u_2$ solves 
\begin{equation*}
\begin{cases}
                \Delta u + qu = 0\quad&\text{in }\Omega, \\

                u = \Phi_1-\Phi_2 = \sum c_j u_j|_{\p \Om} \quad&\text{on }\partial\Omega,
\end{cases}
\end{equation*}
for some $c_j$. Writing $v = u - \sum c_j u_j$, we see that $v \in W^{2,r}(\Om)$ solves $(\Delta+q)v = \psi$ and $v|_{\p \Om} = 0$ where $\psi = -\sum c_j \psi_j \in N_q$. Integrating this equation against $\psi$ yields $\int \psi^2 \,dx = 0$, so $c_j = 0$ for all $j$ and therefore $\Phi_1 = \Phi_2$.

Using the uniqueness of $\Phi$ we can show that the $N_q$-orthogonal solution to \eqref{eq_schrodinger1} obtained above is unique. Assume that we have two solutions $u_1,u_2$ with the same data $F,f$ and with $u_j \perp N_q$. Then $u_1-u_2 \in N_q$ and $u_1-u_2 \perp N_q$, which implies $u_1 = u_2$. Finally, let 
\[
X = \{ u \in W^{2,r}(\Om) \,:\, u \perp N_q \}, \qquad Y = \{ f \in W^{2-1/r,r}(\p \Om) \,:\, f \perp D_q \}
\]
and consider the map 
\[
T: X \to L^r(\Om) \times Y, \ \ Tu = (\Delta u + qu, (\mathrm{Id}- P_{D_q})(u|_{\p \Om}))
\]
where $P_{D_q}$ the $L^2(\p \Om)$-orthonormal projection to $D_q$. By what we have proved above, $T$ is a bounded linear bijective operator. The open mapping theorem ensures that $T$ has a bounded inverse. This implies that $u_{F,f}$ depends continuously on $F$ and $f$.
\end{proof}

\section{Solution map for nonlinear equation}\label{sec_first_sol_map}

The following result shows the existence of a map $v \mapsto S(v)$ that parametrizes solutions $u$ of the semilinear equation $\Delta u + a(x,u) = 0$ in $\Om$, when $u$ is close to a fixed solution $w$, in terms of solutions of the corresponding linearized equation.

\begin{Proposition} \label{prop_first_solution_map}
Let \( a\in L^r(\Om, C^{1,\alpha}(\mR)) \) where $r > n/2$, and suppose that \( w \in W^{2,r}(\Om) \) is a solution of
\begin{equation*}
	\Delta w + a(x,w) = 0 \text{ in }\Omega.
\end{equation*}
Let $q(x) = \p_u a(x,w(x))$. There is a \( C^{1} \) map \( S = S_{a,w}: V \to W^{2,r}(\Om) \), where $V$ is a neighborhood of $0$ in $W^{2,r}(\Om)$, such that \( u = S(v) \) solves 
\[
\Delta u + a(x,u) = \Delta v + qv.
\]
One has \( S(0) = w \), $(DS)_0 = \mathrm{Id}$, $S(v)-w-v \perp N_q$ and $S(v)-w-v|_{\p \Om} \in D_q$.

Conversely, any solution of $\Delta u + a(x,u) = 0$ with $\norm{u-w}_{W^{2,r}(\Om)}$ small enough must be of the form $u = S(v)$ for some $v \in W^{2,r}(\Om)$ solving $\Delta v + qv = 0$. The function $v$ is given by
\begin{equation*}
    v=P_{N_q}(u-w) + \tilde v
\end{equation*}
where $P_{N_q}$ is the $L^2(\Om)$-orthogonal projection to $N_q$, and $\tilde v=G_q(0,(u-w)|_{\p\Om})$ is the unique solution given by Proposition \ref{prop_linearized_ri_wonep}.
\end{Proposition}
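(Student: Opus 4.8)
The plan is to construct $S$ by a single application of the implicit function theorem, folding the constraints $S(v)-w-v \perp N_q$ and $(S(v)-w-v)|_{\p\Om}\in D_q$ into the target space via the operator $G_q$ of Proposition~\ref{prop_linearized_ri_wonep}. Write the desired solution as $u = w+v+z$. Using $\Delta w = -a(x,w)$ and Taylor's formula in $t$, the equation $\Delta u + a(x,u) = \Delta v + qv$ is equivalent to
\[
\Delta z + qz = -N(v,z), \qquad N(v,z) := a(\cdot,w+v+z) - a(\cdot,w) - q\,(v+z).
\]
For $v,z$ in a fixed bounded neighborhood of $0$ in $W^{2,r}(\Om)$ the argument $w+v+z$ stays in a fixed interval $[-M,M]$ (here we use $r>n/2$, so that $W^{2,r}(\Om)\hookrightarrow L^\infty(\Om)$), and the mean value theorem together with $a\in L^r(\Om,C^{1,\alpha}(\mR))$ gives $\abs{N(v,z)(x)} \leq C(x)\norm{v+z}_{L^\infty}^{1+\alpha}$ with $C(x)=[\p_u a(x,\cdot)]_{C^\alpha([-M,M])}\in L^r(\Om)$; in particular $N$ maps that neighborhood into $L^r(\Om)$.

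Set $Z := \{z\in W^{2,r}(\Om) : z\perp N_q,\ z|_{\p\Om}\in D_q\}$. By Proposition~\ref{prop_linearized_ri_wonep}, $Z$ is precisely the range of the bounded linear map $G\mapsto G_q(G,0)$, which is also bounded below (since $G=\Delta G_q(G,0)+qG_q(G,0)$ and $W^{2,r}\hookrightarrow L^\infty$), so $Z$ is a closed subspace of $W^{2,r}(\Om)$ and $z\in Z$ solves $\Delta z+qz = G$ if and only if $z = G_q(G,0)$. Define
\[
\mathcal{F}: W^{2,r}(\Om)\times Z \to Z, \qquad \mathcal{F}(v,z) = z + G_q(N(v,z),0),
\]
so that $\mathcal{F}(0,0)=0$ since $N(0,0)=0$. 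The crucial point is that the superposition map $\zeta\mapsto a(\cdot,\zeta)$ is $C^1$ from a neighborhood of $w$ in $W^{2,r}(\Om)$ into $L^r(\Om)$, with derivative $h\mapsto \p_u a(\cdot,\zeta)\,h$: Fréchet differentiability follows from the $o(\norm{h}_{W^{2,r}})$ remainder estimate above, and continuity of the derivative from $\norm{\p_u a(\cdot,\zeta_1)-\p_u a(\cdot,\zeta_2)}_{L^r}\leq \norm{C}_{L^r}\norm{\zeta_1-\zeta_2}_{L^\infty}^\alpha$ combined with $\norm{gh}_{L^r}\leq C'\norm{g}_{L^r}\norm{h}_{W^{2,r}}$. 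Hence $N$ (built from this map and the bounded linear maps $(v,z)\mapsto v+z$ and multiplication by $q\in L^r(\Om)$), and therefore $\mathcal{F}$, are $C^1$. Moreover $D_zN(0,0)=D_vN(0,0)=$ multiplication by $\p_u a(\cdot,w)-q=0$, so $D_z\mathcal{F}(0,0)=\mathrm{Id}_Z$ and $D_v\mathcal{F}(0,0)=0$.

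The implicit function theorem now produces a neighborhood $V$ of $0$ in $W^{2,r}(\Om)$ and a $C^1$ map $v\mapsto z(v)\in Z$ with $z(0)=0$, $(Dz)_0=0$, and $\mathcal{F}(v,z(v))=0$ on $V$. Putting $S(v):=w+v+z(v)$, the relation $z(v)=G_q(-N(v,z(v)),0)$ gives $\Delta z(v)+qz(v)=-N(v,z(v))$, which unwinds to $\Delta S(v)+a(x,S(v))=\Delta v+qv$; also $S(0)=w$, $(DS)_0=\mathrm{Id}+(Dz)_0=\mathrm{Id}$, and $S(v)-w-v=z(v)\in Z$ yields the orthogonality and boundary assertions. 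For the converse, given a solution $u$ of $\Delta u+a(x,u)=0$ with $\norm{u-w}_{W^{2,r}(\Om)}$ small, set $v:=P_{N_q}(u-w)+\tilde v$ with $\tilde v:=G_q(0,(u-w)|_{\p\Om})$; then $\Delta v+qv=0$, and $\norm{v}_{W^{2,r}}\leq C\norm{u-w}_{W^{2,r}}$ by \eqref{uff_estimate} and boundedness of $P_{N_q}$. Writing $\zeta:=u-w-v$, one checks $\zeta\in Z$ (using $N_q\subseteq H^1_0(\Om)$, so $P_{N_q}(u-w)|_{\p\Om}=0$, and $\tilde v|_{\p\Om}=(u-w)|_{\p\Om}+\Phi(0,(u-w)|_{\p\Om})$), that $\zeta$ is small, and that $w+v+\zeta=u$ gives $\Delta\zeta+q\zeta=-a(x,u)+a(x,w)+qv=-N(v,\zeta)$, i.e.\ $\mathcal{F}(v,\zeta)=0$. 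By the local uniqueness in the implicit function theorem, $\zeta=z(v)$, hence $u=S(v)$.

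The only step beyond bookkeeping around Proposition~\ref{prop_linearized_ri_wonep} is the $C^1$ regularity of the Nemytskii map $\zeta\mapsto a(\cdot,\zeta)$ from $W^{2,r}(\Om)$ to $L^r(\Om)$, and this is exactly where the low-regularity hypotheses enter: $r>n/2$ keeps the argument of $a$ in a fixed bounded interval and keeps products of $L^r$ functions with $W^{2,r}$ functions in $L^r$, while the $C^{1,\alpha}$ dependence on $t$ supplies the power-rate remainder estimate (giving Fréchet, not merely G\^ateaux, differentiability) and a Hölder modulus of continuity for the derivative. Uniform $C^1$ regularity in $t$ would suffice here as well, replacing the power rate by a dominated convergence argument.
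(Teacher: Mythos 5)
Your proof is correct. The approach is closely parallel to the paper's (a single application of the implicit function theorem at the base solution, with invertibility of the linearization supplied by Proposition~\ref{prop_linearized_ri_wonep}, and the $C^1$ Nemytskii estimate as the analytic input), but the packaging differs in a noteworthy way. The paper works on the unconstrained space $W^{2,r}(\Om)\times W^{2,r}(\Om)$ and takes values in a product $L^r\times(W^{2-1/r,r}(\p\Om)\cap D_q^\perp)\times N_q$, listing the PDE, the boundary condition, and the $N_q$-orthogonality as three separate output components, so that $(D_uF)_{(w,0)}$ is exactly the isomorphism of Proposition~\ref{prop_linearized_ri_wonep}. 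You instead substitute $u=w+v+z$, restrict $z$ to the constrained closed subspace $Z=\{z:\ z\perp N_q,\ z|_{\p\Om}\in D_q\}$, and precompose the nonlinearity with $G_q$ to obtain $\mathcal F(v,z)=z+G_q(N(v,z),0)$, whose $z$-linearization at the origin is manifestly the identity. Your formulation requires the extra (correct) observations that $Z$ is closed and that it coincides with the range of $G\mapsto G_q(G,0)$, with $(\Delta+q)|_Z$ inverting that map; in return, invertibility of the linearization is immediate rather than a citation of the proposition, and the structure $z=-G_q(N(v,z),0)$ makes the fixed-point flavor of the construction transparent. You also spell out the continuity (H\"older estimate) of the Fr\'echet derivative of the superposition operator, which the paper's Lemma~\ref{lemma_axu_differentiable_map} leaves implicit after proving the remainder bound. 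The converse direction is handled identically in substance: you verify $\zeta=u-w-v\in Z$, $\mathcal F(v,\zeta)=0$, and $\|\zeta\|$ small, then invoke local uniqueness from the implicit function theorem, exactly mirroring the paper's uniqueness argument for $F(u,v)=0$ near $(w,0)$.
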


Note that the result gives the Taylor expansion 
\[
S(v) = w + v + R(v)
\]
where $\norm{R(v)}_{W^{2,r}(\Om)} = o(\norm{v}_{W^{2,r}(\Om)})$ as $\norm{v}_{W^{2,r}(\Om)} \to 0$ and $R(v) \in N_q^{\perp}$, $R(v)|_{\p \Om} \in D_q$.

For the proof, we give a lemma on the properties of the map $u \mapsto a(x,u)$.

\begin{Lemma} \label{lemma_axu_differentiable_map}
Let $a \in L^r(\Om, C^{1,\alpha}(\mR))$ where $r > n/2$ and $\alpha > 0$. Then $u \mapsto a(\,\cdot\,, u(\,\cdot\,))$ is a $C^1$ map from $W^{2,r}(\Om)$ to $L^r(\Om)$.
\end{Lemma}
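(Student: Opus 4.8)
The plan is to show first that the map $\Psi: u \mapsto a(\,\cdot\,, u(\,\cdot\,))$ is well-defined and continuous from $W^{2,r}(\Om)$ to $L^r(\Om)$, and then to identify its derivative as $(D\Psi)_u h = \p_u a(\,\cdot\,, u(\,\cdot\,)) h$ and prove that $u \mapsto (D\Psi)_u$ is continuous as a map into the bounded operators $W^{2,r}(\Om) \to L^r(\Om)$. Throughout I will use the Sobolev embedding $W^{2,r}(\Om) \subseteq C^{\gamma}(\ol{\Om})$ for some $\gamma > 0$ (valid since $r > n/2$), so that every $u \in W^{2,r}(\Om)$ has a continuous representative with $\norm{u}_{L^\infty} \lesssim \norm{u}_{W^{2,r}}$; the $L^r(\Om, C^{1,\alpha}(\mR))$ hypothesis then guarantees, as in the computation in the introduction, that $a(\,\cdot\,, u(\,\cdot\,)) \in L^r(\Om)$ and likewise $\p_u a(\,\cdot\,, u(\,\cdot\,)) \in L^r(\Om)$.

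For the derivative, fix $u$ and let $h \in W^{2,r}(\Om)$ be small. By the fundamental theorem of calculus in the $t$ variable,
\[
a(x, u(x) + h(x)) - a(x, u(x)) - \p_u a(x,u(x)) h(x) = \int_0^1 \big( \p_u a(x, u(x) + \tau h(x)) - \p_u a(x, u(x)) \big) h(x) \, d\tau .
\]
Taking $L^r(\Om)$ norms and using that $|h(x)| \leq \norm{h}_{L^\infty} \lesssim \norm{h}_{W^{2,r}}$ pointwise, the remainder is bounded by $\norm{h}_{W^{2,r}}$ times $\sup_{|\tau| \leq 1} \norm{\p_u a(\,\cdot\,, u + \tau h) - \p_u a(\,\cdot\,, u)}_{L^r(\Om)}$. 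So the crux is to show this last quantity is $o(1)$ as $\norm{h}_{W^{2,r}} \to 0$; the same estimate (with $u$ replaced by a nearby point) simultaneously gives continuity of $u \mapsto (D\Psi)_u$. This reduces the whole lemma to the following claim: \emph{if $u_k \to u$ in $L^\infty(\Om)$, then $\p_u a(\,\cdot\,, u_k(\,\cdot\,)) \to \p_u a(\,\cdot\,, u(\,\cdot\,))$ in $L^r(\Om)$.} Here the $C^{1,\alpha}$ regularity in $t$ is what makes things work uniformly: writing $b(x,t) = \p_u a(x,t)$, for any $M$ with $\norm{u_k}_{L^\infty}, \norm{u}_{L^\infty} \leq M$ we have the pointwise bound
\[
|b(x, u_k(x)) - b(x, u(x))| \leq [b(x,\cdot)]_{C^{0,\alpha}([-M,M])} \, |u_k(x) - u(x)|^\alpha \leq \norm{a}_{?} \cdot \norm{u_k - u}_{L^\infty}^\alpha
\]
modulo the correct seminorm, and since $x \mapsto [b(x,\cdot)]_{C^{0,\alpha}}$ is (dominated by) an $L^r$ function by the hypothesis $a \in L^r(\Om, C^{1,\alpha}(\mR))$, taking the $L^r(\Om)$ norm gives $\norm{b(\,\cdot\,,u_k) - b(\,\cdot\,,u)}_{L^r(\Om)} \lesssim \norm{u_k - u}_{L^\infty}^\alpha \to 0$. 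This quantitative Hölder-in-$t$ bound is cleaner than a dominated-convergence argument and is where the $\alpha > 0$ assumption is genuinely used.

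The main obstacle — really the only subtle point — is bookkeeping the Banach-space-valued norm $\norm{a}_{L^r(\Om, C^{1,\alpha}(\mR))}$: one must be careful that $x \mapsto \norm{a(x,\cdot)}_{C^{1,\alpha}([-M,M])}$ is measurable and in $L^r(\Om)$ for each fixed $M$ (which follows since the $C^{1,\alpha}(\mR)$ norm controls every $C^{1,\alpha}([-M,M])$ norm and $a$ is Bochner measurable), and that all estimates are uniform over $\tau \in [0,1]$ and over $u$ in a fixed ball. Once this is set up, continuity of $\Psi$ itself follows a fortiori (in fact from the $C^{0,\alpha}$-in-$t$ bound applied to $a$ rather than $\p_u a$), and combining the pieces shows $\Psi \in C^1(W^{2,r}(\Om), L^r(\Om))$ with $(D\Psi)_u h = \p_u a(\,\cdot\,, u(\,\cdot\,)) h$. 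I would also remark that only $\norm{u}_{L^\infty}$, not the full $W^{2,r}$ norm, enters the estimates, which is why the argument localizes to a neighborhood of any given $u$.
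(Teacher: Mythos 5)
Your proposal is correct and uses the same core ingredients as the paper: the fundamental theorem of calculus identity in the $t$ variable, the $C^{1,\alpha}$-in-$t$ Hölder seminorm bound on $\p_u a$ integrated in $L^r(\Om)$, and the Sobolev embedding $W^{2,r}(\Om) \hookrightarrow L^{\infty}(\Om)$, leading to the same remainder bound $\lesssim \norm{h}_{W^{2,r}}^{1+\alpha}$. You also explicitly verify continuity of $u \mapsto (D\Psi)_u$ via the estimate $\norm{\p_u a(\cdot,u_1)-\p_u a(\cdot,u_2)}_{L^r} \lesssim \norm{u_1-u_2}_{L^\infty}^{\alpha}$, a step the paper leaves implicit (it follows there from the uniformity of the $C_M$-bound over $L^\infty$-bounded sets together with local boundedness of the derivative), but this is a matter of exposition rather than a different argument.
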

\begin{proof}
To prove that $u \mapsto a(\,\cdot\,, u(\,\cdot\,))$ is $C^1$, we need to show that for any $u, v \in W^{2,r}(\Om)$ one has 
\[
\norm{a(x,u+v)-a(x,u)-\p_u a(x,u) v}_{L^r} = o(\norm{v}_{W^{2,r}}) \quad \text{as $\norm{v}_{W^{2,r}} \to 0$.}
\]
Let $u, v \in W^{2,r}(\Om)$ with $\norm{u}_{L^{\infty}} \leq M$ and $\norm{v}_{L^{\infty}} = \eps \leq 1$. We obtain 
\begin{align*}
\norm{a(x,u+v)-a(x,u)-\p_u a(x,u)v}_{L^r}^r &= \int_{\Om} |a(x,u(x)+v(x))-a(x,u(x))-\p_u a(x,u(x))v(x)|^r \,dx \\ &\leq \int_{\Om} \sup_{t,h} |a(\,\cdot\,, t+h) - a(\,\cdot\,,t) - \p_u a(\,\cdot\,,t)h|^r \,dx
\end{align*}
where the supremum is over $|t| \leq M$ and $|h| \leq \eps$. Since $a \in L^r(\Om, C^{1,\alpha}(\mR))$, we have 
\[
\int_{\Om} \sup_{|t|,|s| \leq M+1} \frac{|\p_u a(x,t) - \p_u a(x,s)|^r}{|t-s|^{\alpha r}} \,dx \leq C_M < \infty.
\]
In particular, for supremum over $|t| \leq M$, $|h| \leq \eps$ we have 
\begin{align*}
\int_{\Om} \sup_{t,h} |a(x, t+h) - a(x,t) - \p_u a(x,t)h|^r \,dx &= \int_{\Om} \sup_{t,h} |h \int_0^1 (\p_u a(x, t + sh) - \p_u a(x, t)) \,ds|^r \,dx \\
 &\leq \eps^r \int_{\Om} \sup_{t,h} \sup_{s \in [0,1]} |\p_u a(x, t + sh) - \p_u a(x, t)|^r \,dx \\
 &= \eps^r \int_{\Om} \sup_{t,h} \sup_{s \in [0,1]} \frac{|\p_u a(x, t + sh) - \p_u a(x, t)|^r}{|sh|^{r\alpha}} |sh|^{r\alpha} \,dx \\
 &\leq \eps^{r+r\alpha} \int_{\Om} \sup_{t,h} \sup_{s \in [0,1]} \frac{|\p_u a(x, t + sh) - \p_u a(x, t)|^r}{|sh|^{r\alpha}} \,dx \\
 &\leq C_M \eps^{r+r\alpha}.
\end{align*}
Combining these estimates gives 
\begin{align*}
\norm{a(x,u+v)-a(x,u)-\p_u a(x,u)v}_{L^r} \leq C_M \norm{v}_{L^{\infty}}^{1+\alpha} \leq C_M \norm{v}_{W^{2,r}}^{1+\alpha}.
\end{align*}
This shows that the map $u \mapsto a(x,u)$ is $C^1$ as required.
\end{proof}

\begin{Remark} \label{remark_c1_uniform}
Lemma \ref{lemma_axu_differentiable_map} is valid with the same proof when $a \in L^r(\Om, C^{1,\eta_T}([-T,T]))$ for any $T > 0$, where $C^{1,\eta_T}([-T,T])$ is the space of functions in $C^1([-T,T])$ whose first derivatives have a fixed modulus of continuity $\eta_T$. The point is that this modulus of continuity should be uniform over $x \in \Om$.
\end{Remark}

\begin{proof}[Proof of Proposition \ref{prop_first_solution_map}]
Let $D_q$ be as in Proposition \ref{prop_linearized_ri_wonep}, and define the map 
\[ 
F\colon W^{2,r}(\Omega)\times W^{2,r}(\Omega) \to L^r(\Omega) \times (W^{2-1/r,r}(\p \Om) \cap D_q^{\perp}) \times N_{q}
\]
by
\begin{equation*}
	F(u,v)\coloneqq (\Delta u+a(x,u) - \Delta v - q v, P_{D_q^{\perp}}((u-w-v)\vert_{\partial\Omega}), P_{N_{q}}(u-w-v)).
\end{equation*}
Since $a \in L^r(\Om, C^{1,\alpha}(\mR))$ where $r > n/2$, the map $F$ is $C^1$, by Lemma \ref{lemma_axu_differentiable_map}, and satisfies $F(w,0) = (0,0,0)$. Its Fr\'echet derivative with respect to $u$ satisfies 
\begin{equation} \label{duf_formula}
(D_u F)_{(w,0)}(h) = (\Delta h + q h, P_{D_q^{\perp}}(h|_{\p \Om}), P_{N_q} h).
\end{equation}
Then $(D_u F)_{(w,0)}$ is a bounded linear operator $ W^{2,r}(\Omega) \to L^r(\Omega)\times (W^{2-1/r,r}(\p \Om) \cap D_q^{\perp}) \times N_{q}$. It is bijective by Proposition \ref{prop_linearized_ri_wonep}, so the open mapping theorem ensures that it is an isomorphism. Now the implicit function theorem in Banach spaces implies the existence of a $C^1$ map $S: V \to U$, where $U$ and $V$ are open sets in $W^{2,r}(\Om)$ with $0 \in V$ and $w \in U$, such that $F(S(v),v) = 0$ for $v \in V$ and the solution $S(v)$ is unique in the sense that 
\[
F(u,v) = 0 \text{ for $(u,v) \in U \times V$} \quad \implies \quad u = S(v).
\]
%The implicit function theorem also gives the uniqueness of such solution close to $w$.
One has $S(0) = w$, and differentiating $F(S(v),v) = 0$ with respect to $v$ gives 
\[
(D_u F)_{(w,0)} (DS)_0(\tilde{v}) - (\Delta \tilde{v} + q \tilde{v}, P_{D_q^{\perp}}(\tilde{v}|_{\p \Om}), P_{N_q}(\tilde{v})) = 0.
\]
From \eqref{duf_formula} we obtain 
\[
(D_u F)_{(w,0)} ( (DS)_0(\tilde{v}) - \tilde{v} ) = 0.
\]
The fact that $(D_u F)_{(w,0)}$ is an isomorphism gives that $(DS)_0(\tilde{v}) = \tilde{v}$. The identity $F(S(v),v) = 0$ yields $S(v)-w-v \perp N_q$ and $S(v)-w-v|_{\p \Om} \in D_q$.

Conversely, let $u \in W^{2,r}(\Om)$ solve $\Delta u + a(x,u) = 0$. Apply Proposition \ref{prop_linearized_ri_wonep} to find the unique function $v \in W^{2,r}(\Om)$ such that 
\[
\Delta v + qv = 0, \qquad v|_{\p \Om} \in (u-w)|_{\p \Om} + D_q, \qquad P_{N_q} v = P_{N_q}(u-w).
\]
Then $v = P_{N_q}(u-w) + G_q(0, u-w|_{\p \Om})$ and $F(u,v) = 0$. If $\norm{u-w}_{W^{2,r}}$ is sufficiently small, the uniqueness notion above implies that $u = S(v)$.
\end{proof}

For the next result, we define $q=\p_ua(x,w),$ $q_v=\p_ua(x,S_{a,w}(v))$ and
\begin{align*}
    V_{\tilde q}= \{h\in W^{2,r}(\Om) : \Delta h + \tilde q h= 0\}.
\end{align*}

\begin{Lemma}\label{S_deriv_isom}
    In the setting of Proposition \ref{prop_first_solution_map} let $v\in V_q$ be small. Then $DS_{a,w}(v)\colon V_q\to V_{q_v}$ is an isomorphism.
\end{Lemma}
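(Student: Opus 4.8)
The plan is to differentiate the defining identity $\Delta S(v) + a(x,S(v)) = \Delta v + qv$ of the map $S = S_{a,w}$ and to combine this with the fact that $(DS)_0 = \mathrm{Id}$. Since $S \colon V \to W^{2,r}(\Om)$ is $C^1$, the assignment $v \mapsto (DS)_v$ is continuous from $V$ into $\mathcal{L}(W^{2,r}(\Om))$; because $(DS)_0 = \mathrm{Id}$ is invertible and the invertible operators form an open subset of $\mathcal{L}(W^{2,r}(\Om))$, there is $\rho > 0$ so that for $\norm{v}_{W^{2,r}(\Om)} < \rho$ the operator $(DS)_v \colon W^{2,r}(\Om) \to W^{2,r}(\Om)$ is a topological isomorphism. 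I would work with such (small) $v \in V_q$ from now on.

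Next I would differentiate the PDE. By Lemma \ref{lemma_axu_differentiable_map}, $u \mapsto a(\,\cdot\,,u)$ is $C^1$ from $W^{2,r}(\Om)$ to $L^r(\Om)$ with derivative $h \mapsto \p_u a(x,u)\,h$, and $q_v := \p_u a(x,S(v)) \in L^r(\Om)$ since $S(v) \in W^{2,r}(\Om) \hookrightarrow L^{\infty}(\Om)$. Applying the chain rule to $\Delta S(v) + a(x,S(v)) = \Delta v + qv$ gives, for every $h \in W^{2,r}(\Om)$,
\[
\Delta\big(DS(v)h\big) + q_v\, DS(v)h = \Delta h + q h .
\]
In particular $DS(v)$ maps $V_q$ into $V_{q_v}$.

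The key observation is that this identity says $\Delta h + qh = 0$ if and only if $\Delta(DS(v)h) + q_v\, DS(v)h = 0$, i.e.\ $h \in V_q \iff DS(v)h \in V_{q_v}$. Since $(DS)_v$ is a bijection of $W^{2,r}(\Om)$, this forces $(DS)_v(V_q) = V_{q_v}$: given $H \in V_{q_v}$, the element $h := (DS)_v^{-1}H \in W^{2,r}(\Om)$ satisfies $\Delta h + qh = \Delta H + q_v H = 0$, so $h \in V_q$ and $DS(v)h = H$. Thus $DS(v)$ restricts to a linear bijection $V_q \to V_{q_v}$ of closed subspaces of $W^{2,r}(\Om)$; it is bounded, and its inverse is the restriction of the bounded operator $(DS)_v^{-1}$, hence bounded. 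Therefore $DS_{a,w}(v)\colon V_q \to V_{q_v}$ is a topological isomorphism.

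I do not expect a real obstacle here: the only step needing care is the differentiation of the PDE in $v$, i.e.\ the Fr\'echet differentiability of $v \mapsto a(x,S(v))$ with derivative $h \mapsto q_v\, DS(v)h$, which is immediate from Lemma \ref{lemma_axu_differentiable_map} and the chain rule. The crucial structural point is simply that the differentiated equation has exactly the form $\Delta(DS(v)h) + q_v\, DS(v)h = \Delta h + qh$, so the two sides vanish together and $(DS)_v$ is forced to identify $V_q$ with $V_{q_v}$.
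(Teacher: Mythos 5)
Your proof is correct and takes essentially the same route as the paper (whose proof is a pointer to \cite[Lemma 2.5]{JNS2023}): use the $C^1$ regularity of $S$ and $(DS)_0 = \mathrm{Id}$, i.e.\ $DR(0)=0$ for $R(v)=S(v)-w-v$, together with openness of the invertible operators in $\mathcal{L}(W^{2,r}(\Om))$ to get that $(DS)_v$ is a bijection of $W^{2,r}(\Om)$ for small $v$, and differentiate the defining identity $\Delta S(v) + a(x,S(v)) = \Delta v + qv$ to obtain $\Delta\bigl(DS(v)h\bigr) + q_v DS(v)h = \Delta h + qh$, which forces $(DS)_v$ to carry $V_q$ onto $V_{q_v}$.
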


\begin{proof}
    The proof is the same as in \cite[Lemma 2.5]{JNS2023}, by noting that $R(v)$ after Proposition \ref{prop_first_solution_map} is $C^1$ since $S_{a,w}(v)$ is $C^1$ and that $DR(0)=0$.
\end{proof}

\section{Second solution map}\label{sec_second_sol_map_new}

In order to prove our main result, we need to construct solutions to the equations $\Delta u + a_j(x,u) = 0$ such that these solutions have the same Cauchy data and they both depend smoothly on some linear solution $v$. If we attempt to use only the solution maps $S_{a_1,w_1}$ and $S_{a_2,w_2}$, then the latter condition requires that solutions of $(\Delta + \p_ua_1(x,w_1))v=0$ also solve $(\Delta + \p_ua_1(x,w_2))v=0$. This we do not know a priori. Moreover, there is no guarantee that even the Dirichlet data of $S_{a_1,w_1}(v)$ and $S_{a_2,w_2}(v)$ are equal, let alone the Cauchy data. Thus we need to replace $S_{a_2,w_2}$ with another solution map $T_{a_2,w_1}$ that uses information on the Cauchy data sets. Next we construct $T_{a_2,w_1}$, and in Proposition \ref{prop_second_solution_map} we prove that it is $C^1$ in $v$ without requiring that $\p_ua_1(x,w_1)=\p_ua_1(x,w_2)$.

We now assume that $a_1, a_2 \in L^r(\Om, C^{1,\alpha}(\mR))$ for some $r > n/2$ are two nonlinearities, $w_1 \in W^{2,r}(\Om)$ solves $\Delta w_1 + a_1(x,w_1) = 0$ in $\Om$, and that one has the local Cauchy data set inclusion 
\[
C_{a_1}^{w_1,\delta}  \subseteq  C_{a_2}^{0,C}.
\]
In particular, this implies that there is $w_2 \in W^{2,r}(\Om)$ solving $\Delta w_2 + a_2(x,w_2) = 0$ and having the same Cauchy data as $w_1$ on $\p \Om$.

Let $S_{a_1,w_1}$ be the $C^1$ solution map for the nonlinearity $a_1$ given in Proposition \ref{prop_first_solution_map}. If we write $q_1 = \p_u a_1(x, w_1(x))$ and 
\[
V_{q_1,\delta_1} = \{ v \in W^{2,r}(\Om) \,:\, \Delta v + q_1 v = 0, \quad \norm{v}_{W^{2,r}} < \delta_1 \},
\]
then for $v \in V_{q_1,\delta_1}$ with $\delta_1 > 0$ small enough, $u = S_{a_1,w_1}(v)$ solves $\Delta u + a(x,u) = 0$ with expansion 
\[
u = w_1 + v + o(\norm{v}_{W^{2,r}})
\]
as $v \to 0$. From the inclusion $C_{a_1}^{w_1,\delta}  \subseteq  C_{a_2}^{0,C}$ we obtain another map 
\[
T_{a_2,w_1}: V_{q_1,\delta_1} \to W^{2,r}(\Om), \ \ v \mapsto u_2,
\]
where $u_2 \in W^{2,r}(\Om)$ is a solution of $\Delta u_2 + a_2(x,u_2) = 0$ obtained from the inclusion $C_{a_1}^{w_1,\delta}  \subseteq  C_{a_2}^{0,C}$ and having the same Cauchy data as $u_1 = S_{a_1,w_1}(v)$ on $\p \Om$.

The solution map $T_{a_2,w_1}$ above produces solutions of $\Delta u + a_2(x,u) = 0$ parametrized by solutions $v$ of $\Delta v + q_1 v = 0$, but we do not know yet if this map is $C^1$ with respect to $v$. We will prove this next.

\begin{Proposition} \label{prop_second_solution_map}
Let  $a_1, a_2 \in L^r(\Om, C^{1,\alpha}(\mR))$ where $r > n/2$ and $r \geq 2$, and suppose that $C_{a_1}^{w_1,\delta}  \subseteq  C_{a_2}^{0,C}$. The map $T_{a_2,w_1}$ above is a $C^1$ map $V_{q_1,\delta_1} \to W^{2,r}(\Om)$ when $\delta_1 > 0$ is small enough. Each $u = T_{a_2,w_1}(v)$ solves $\Delta u + a_2(x,u) = 0$ and has the same Cauchy data on $\p \Om$ as $S_{a_1,w_1}(v)$.
\end{Proposition}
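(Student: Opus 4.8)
The plan is to realize $T_{a_2,w_1}$ as the composition of $S_{a_1,w_1}$ with a map obtained by solving an auxiliary equation, and then to apply the implicit function theorem once more. Concretely, for $v \in V_{q_1,\delta_1}$ let $u_1 = S_{a_1,w_1}(v)$, which is a $C^1$ function of $v$ by Proposition \ref{prop_first_solution_map}. By hypothesis, $u = T_{a_2,w_1}(v)$ is a solution of $\Delta u + a_2(x,u) = 0$ with the same Cauchy data as $u_1$; in particular $u - w_2$ has small $W^{2,r}$-norm when $\delta_1$ is small (here $w_2$ is the $a_2$-solution with the Cauchy data of $w_1$, whose uniqueness and proximity to $w_1$ come from Lemma \ref{lem_carleman_wtwor}). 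Hence, by the converse part of Proposition \ref{prop_first_solution_map} applied to the nonlinearity $a_2$ at the base solution $w_2$, there is a unique small $\tilde v \in V_{q_2}$ with $q_2 = \p_u a_2(x,w_2(x))$ such that $u = S_{a_2,w_2}(\tilde v)$. So it suffices to show that the assignment $v \mapsto \tilde v$ is $C^1$ from $V_{q_1,\delta_1}$ into $V_{q_2}$.

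The equation pinning down $\tilde v$ is the matching of Cauchy data: $S_{a_2,w_2}(\tilde v)$ and $S_{a_1,w_1}(v)$ have the same trace and the same Neumann data on $\p\Om$. Since $S_{a_2,w_2}(\tilde v) - w_2 - \tilde v \in N_{q_2}^\perp$ has boundary trace in $D_{q_2}$ and, on the linearized side, $\tilde v|_{\p\Om} - v|_{\p\Om}$ must carry the difference, the trace equation has the shape of a finite-rank/boundary condition that is handled by Proposition \ref{prop_linearized_ri_wonep}. The more serious constraint is the Neumann data: the difference $\p_\nu S_{a_2,w_2}(\tilde v) - \p_\nu S_{a_1,w_1}(v)$ must vanish on $\p\Om$. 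This is an equation with values in a fixed (finite-dimensional) space, because once the traces agree, the difference $S_{a_2,w_2}(\tilde v) - S_{a_1,w_1}(v)$ solves a homogeneous Dirichlet problem for a linear operator and its Neumann data lives in a finite-dimensional obstruction space; to make this rigorous one uses the projection operator of Lemma \ref{lemma_projection_operator}, defined via a fourth order equation, which requires the extra hypothesis $r \geq 2$ in dimensions $n \in \{2,3\}$. I would set up a $C^1$ map $\mathcal G(v,\tilde v)$ on a neighborhood of $(0,0)$ in $V_{q_1}\times V_{q_2}$ whose first component is the $D_{q_2}^\perp$-part of the trace difference and whose second component is the projected Neumann-data difference, check $\mathcal G(0,0)=0$, and compute $(D_{\tilde v}\mathcal G)_{(0,0)}$.

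At the linearization point $v = \tilde v = 0$ one has $S_{a_j}(0) = w_j$ and $(DS_{a_j})_0 = \mathrm{Id}$, and $w_1, w_2$ have the same Cauchy data, so $\mathcal G(0,0) = 0$ and the derivative $(D_{\tilde v}\mathcal G)_{(0,0)}$ is, up to the projections, the map $\tilde v \mapsto (\tilde v|_{\p\Om}\ \mathrm{mod}\ D_{q_2},\ \p_\nu \tilde v|_{\p\Om}\ \mathrm{mod}\ (\text{obstruction}))$ composed with $(DS_{a_2,w_2})_0 = \mathrm{Id}$. Showing this is an isomorphism $V_{q_2} \to (\text{target})$ is exactly the content guaranteed by Proposition \ref{prop_linearized_ri_wonep} together with Lemma \ref{lemma_projection_operator}: a solution of $\Delta \tilde v + q_2 \tilde v = 0$ with trivial (projected) Cauchy data must vanish, which follows from a unique continuation argument (the same one behind Lemma \ref{lem_carleman_wtwor}). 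Then the implicit function theorem in Banach spaces yields a $C^1$ map $v \mapsto \tilde v$ near $0$, and composing with the $C^1$ map $\tilde v \mapsto S_{a_2,w_2}(\tilde v)$ gives that $T_{a_2,w_1} = S_{a_2,w_2}\circ(v\mapsto\tilde v)$ is $C^1$ on a possibly smaller $V_{q_1,\delta_1}$. The statement that each $u = T_{a_2,w_1}(v)$ solves $\Delta u + a_2(x,u) = 0$ with the Cauchy data of $S_{a_1,w_1}(v)$ is built into the construction.

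The main obstacle I anticipate is the Neumann-matching step: the naive requirement $\p_\nu S_{a_2,w_2}(\tilde v) = \p_\nu S_{a_1,w_1}(v)$ on $\p\Om$ is overdetermined relative to the available parameters $\tilde v \in V_{q_2}$, and it is only consistent because, after unique continuation, the mismatch is forced to lie in a finite-dimensional space on which the projection operator acts. Identifying that space correctly and verifying that the fourth-order construction of Lemma \ref{lemma_projection_operator} produces a genuine projection with the required mapping properties (hence the role of $r\ge 2$ for $n\in\{2,3\}$) is where the real work lies; the rest is a bookkeeping application of the implicit function theorem in the spirit of Proposition \ref{prop_first_solution_map}.
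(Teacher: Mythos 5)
Your plan is a genuinely different route from the paper's, but the step you flag as ``where the real work lies'' is a gap rather than bookkeeping, and I don't think the approach goes through as stated. The claim that, once the traces agree, the difference $\rho = S_{a_2,w_2}(\tilde v) - S_{a_1,w_1}(v)$ has its Neumann trace in a finite-dimensional obstruction space is not correct: writing $u_1 = S_{a_1,w_1}(v)$ and $Q_v = \int_0^1 \p_u a_2(x, u_1 + s\rho)\,ds$, one has $(\Delta + Q_v)\rho = (a_1-a_2)(x,u_1)$ with $\rho|_{\p\Om}=0$, which is an \emph{inhomogeneous} Dirichlet problem, so $\p_\nu\rho|_{\p\Om}$ is honest infinite-dimensional data determined by the right-hand side, not an element of something like $\p_\nu N_{Q_v}$. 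Relatedly, the projection of Lemma \ref{lemma_projection_operator} maps $L^r(\Om)$ onto $Z = (\Delta+q_2)(W^{2,r}_0(\Om))$, an infinite-dimensional subspace of interior right-hand sides; it is not a projector on Neumann boundary data, so it cannot serve the purpose your $\mathcal G$ assigns it. Consequently you do not get a well-posed target space for $\mathcal G$, and there is no reason $(D_{\tilde v}\mathcal G)_{(0,0)}$ is an isomorphism; in particular the Cauchy-data map $\tilde v \mapsto (\tilde v|_{\p\Om}, \p_\nu\tilde v|_{\p\Om})$ on $V_{q_2}$ is far from surjective onto the natural trace product, and quotienting by $D_{q_2}$ and a finite-dimensional ``obstruction'' does not repair this.

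The paper sidesteps Cauchy-data matching entirely by working with $r_v = S_{a_1,w_1}(v) - T_{a_2,w_1}(v)$, which is automatically in $W^{2,r}_0(\Om)$, and with the \emph{interior} equation $(\Delta+q_2)r_v = q_2 r_v + a_2(x,u_{1,v}-r_v) - a_1(x,u_{1,v})$, whose left side lies in $Z$ by definition of $Z$. Composing with $P$ gives a $C^1$ map $F: W^{2,r}_0(\Om)\times V_{q_1,\delta_1}\to Z$ with $F(r_v,v)=0$, and because $q_2 = \p_u a_2(x,w_2)$ the $P$-terms cancel at the linearization point, so $(D_r F)_{(r_0,0)} = \Delta+q_2: W^{2,r}_0(\Om)\to Z$, which is an isomorphism precisely by Lemma \ref{lemma_projection_operator}. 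The implicit function theorem then produces the unique local solution $R(v)$, and Lemma \ref{lem_carleman_wtwor} provides the a priori bound showing $r_v$ stays close to $r_0 = w_1-w_2$, hence $r_v = R(v)$. If you want to keep the spirit of your plan, you would need to replace the boundary-matching functional equation by this interior one; the Cauchy data inclusion still supplies the existence of $u_{2,v}$, and the a priori estimate is still what identifies it with the IFT solution.
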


For the proof, we need a few lemmas that are low regularity counterparts of corresponding results in \cite[Section 3]{JNS2023} with constants only depending on an upper bound on $\norm{q}_{L^r(\Om)}$. The first lemma is a quantitative version of the elliptic regularity result in Lemma \ref{lemma_regularity_linear}.

\begin{Lemma} \label{lemma_cauchy_data_1}
Let $\Omega  \subseteq  \mR^{n}$ be a bounded open set with $C^{\infty}$ boundary, let $r>n/2$ and $M > 0$. There is $C > 0$ depending on $M$ such that for any $q \in L^{r}(\Om)$ with $\norm{q}_{L^r} \leq M$ and for any $u \in W^{2,r}(\Om)$ we have 
\[
    \norm{u}_{W^{2,r}(\Om)} \leq C (\norm{(\Delta+q)u}_{L^{r}(\Om)} + \norm{u|_{\p \Om}}_{W^{2-1/r,r}(\p\Om)} + \norm{u}_{H^1(\Om)}).
\]
\end{Lemma}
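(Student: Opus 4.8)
The plan is to reduce to the homogeneous boundary case and then bootstrap the $L^p$ regularity gain exactly as in the proof of Lemma \ref{lemma_regularity_linear}, but now tracking constants carefully to get the quantitative bound. First I would subtract off a harmonic-type extension: let $E f$ be a bounded right inverse of the trace map $W^{2,r}(\Om) \to W^{2-1/r,r}(\p\Om)$, so $\norm{E(u|_{\p\Om})}_{W^{2,r}(\Om)} \le C \norm{u|_{\p\Om}}_{W^{2-1/r,r}(\p\Om)}$ with $C$ absolute. Setting $w = u - E(u|_{\p\Om})$, we have $w \in W^{2,r}(\Om) \cap H^1_0(\Om)$ (well, $w|_{\p\Om}=0$) and $\Delta w = (\Delta+q)u - qu - \Delta E(u|_{\p\Om})$; call the right-hand side $G$. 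It then suffices to prove $\norm{w}_{W^{2,r}(\Om)} \le C\norm{G}_{L^r(\Om)}$ together with a usable bound on $\norm{G}_{L^r}$, and the terms $\norm{qu}_{L^r}$ and $\norm{\Delta E(u|_{\p\Om})}_{L^r}$ are what force the $H^1$ and trace norms onto the right-hand side.

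The core estimate is: for $w$ with zero boundary values and $\Delta w = G \in L^r(\Om)$ one has $\norm{w}_{W^{2,r}(\Om)} \le C_{\Om,r}\norm{G}_{L^r(\Om)}$ by \cite[Theorem 0.3]{JerisonKenig} (Calderón--Zygmund / $W^{2,p}$ estimates on smooth domains, with constant independent of $w$). So everything reduces to controlling $\norm{qu}_{L^r(\Om)}$ in terms of $\norm{q}_{L^r} \le M$, $\norm{u}_{H^1(\Om)}$, and a fraction of $\norm{u}_{W^{2,r}(\Om)}$ that can be absorbed. Here I would run the bootstrap in finitely many steps: by Sobolev embedding $u \in H^1 \subseteq L^{p_0}$ with $p_0 = 2n/(n-2)$ (or any $p_0 < \infty$ when $n=2$), so by generalized Hölder $qu \in L^{s_0}$ with $1/s_0 = 1/r + 1/p_0$ and $\norm{qu}_{L^{s_0}} \le M \norm{u}_{L^{p_0}} \le CM\norm{u}_{H^1}$; then $\Delta w \in L^{s_0}$ plus the $G$-terms gives $w \in W^{2,s_0}$, hence $u \in W^{2,s_0}$ with a quantitative bound, hence $u \in L^{p_1}$ with a better exponent $p_1$, and so on. After finitely many iterations (the number depending only on $n$ and $r$) one reaches an exponent $p_N$ with $1/r + 1/p_N \le 1/r$ effectively, i.e. $u \in L^\infty$ or at least $qu \in L^r$, and the final application of \cite[Theorem 0.3]{JerisonKenig} yields $u \in W^{2,r}$ with the stated bound. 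At each stage the constant depends only on $\Om$, $r$, $n$, and $M$, so the cumulative constant $C$ depends only on those.

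The main obstacle is purely bookkeeping: at the intermediate stages one produces estimates of the form $\norm{u}_{W^{2,s_k}} \le C(\norm{(\Delta+q)u}_{L^r} + \norm{u|_{\p\Om}}_{W^{2-1/r,r}} + \norm{u}_{W^{2,s_{k-1}}})$ and must be careful that $\norm{u}_{W^{2,s_{k-1}}}$ eventually gets replaced by the weaker $\norm{u}_{H^1}$ — this works because the very first step only uses $\norm{u}_{H^1}$, and each subsequent $W^{2,s_k}$ norm on the right is controlled (via the previous inequality) by the data plus $\norm{u}_{H^1}$, so unwinding the finite chain gives the claim. One subtlety worth a line: the exponents must be chosen so that $s_k > 1$ at each step and the process terminates at $r$; if $n \ge 4$ and $r$ is close to $n/2$ one may need the embedding $W^{2,s} \subseteq L^{ns/(n-2s)}$ for $s < n/2$ and then switch to $L^\infty$ once $2s > n$, exactly as in Lemma \ref{lemma_regularity_linear}. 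None of this requires $r \ge 2$, consistent with the hypothesis stated. I would present the bootstrap schematically and refer to the proof of Lemma \ref{lemma_regularity_linear} for the routine exponent arithmetic.
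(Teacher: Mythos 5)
Your argument is correct, but it takes a genuinely different route from the paper's. For the core quantitative estimate you invoke the Calder\'on--Zygmund/Jerison--Kenig $W^{2,p}$ estimate directly and then run an explicit bootstrap in the exponent $s_k$, gaining $2/n - 1/r > 0$ in the reciprocal at each of finitely many steps until you reach $L^\infty \ni u$ and hence $qu \in L^r$, tracking constants (all depending only on $n,r,\Omega,M$) through the finite chain. The paper instead proceeds in two softer moves: for $q=0$ it sets up the map $T(u) = (\Delta u, u|_{\p\Om}, j(u))$ into $L^r \times W^{2-1/r,r}(\p\Om) \times H^1$, uses the \emph{qualitative} regularity of Lemma~\ref{lemma_regularity_linear} only to show $T$ has closed range, and then extracts the estimate from the open mapping theorem; for general $q$ it controls $\norm{qu}_{L^r} \le M\norm{u}_{L^\infty}$ via a single Gagliardo--Nirenberg interpolation $\norm{u}_{W^{s_\theta,p_\theta}} \le \norm{u}_{H^1}^\theta\norm{u}_{W^{2,r}}^{1-\theta}$ followed by Young's inequality to absorb the $W^{2,r}$ term, rather than iterating. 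Your version is more hands-on and self-contained (it needs nothing beyond the standard $W^{2,p}$ Dirichlet estimate and H\"older/Sobolev, and it also works for $q=0$ without the $H^1$ term), at the cost of the bookkeeping you flag: each stage produces a bound in terms of the previous $W^{2,s_{k-1}}$ norm which must be unwound back to the $H^1$ norm, and one must perturb exponents slightly if some $s_k$ lands exactly on $n/2$. The paper's version avoids all exponent arithmetic for the potential term by the single interpolation--absorption step, and avoids citing a quantitative CZ estimate for the $q=0$ base case by using a closed-range/open-mapping argument, which is the standard trick for upgrading a qualitative regularity statement (``$u \in W^{2,r}$'') to a quantitative one without re-proving constants.
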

\begin{proof}
    First consider the case $q=0$. We look at the Banach space $X = L^{r}(\Om) \times W^{2-1/r,r}(\p\Om) \times H^1(\Om)$, with norm 
    \[
    \norm{(F,f,v)}_X = \norm{F}_{L^{r}(\Om)} + \norm{f}_{W^{2-1/r,r}(\p\Om)} +   \norm{v}_{H^1(\Om)},
    \]
    and the bounded, linear, injective map
    \[
    T\colon W^{2,r}(\Om) \to X, \ \ T(u) = (\Delta u, u|_{\p \Om}, j(u)),
    \]
    where $j: W^{2,r}(\Om) \to H^1(\Om)$ is the natural inclusion (here we use Sobolev embedding). 
    Then $T$ has closed range. To see this, suppose that $u_j \in W^{2,r}(\Om)$ and $T(u_j) \to (F, f, v)$ in $X$. Then $u_j \to v$ in $H^1(\Om)$, $u_j|_{\p \Om} \to f$ in $W^{2-1/r,r}(\Om)$ and $\Delta u_j \to F$ in $L^{r}(\Om)$. On the other hand $\Delta u_j \to \Delta v$ in $H^{-1}(\Om)$ and $u_j|_{\p \Om} \to v|_{\p \Om}$ in $H^{1/2}(\p \Om)$, and by uniqueness of limits one has $(\Delta+q)v = F$ and $v|_{\p \Om} = f$. By Lemma \ref{lemma_regularity_linear} the weak solution $v$ satisfies $v \in W^{2,r}(\Om)$. Thus $(F,f,v) = T(v)$ and $\mathrm{Ran}(T)$ is closed.

    We have proved that $T: W^{2,r}(\Om) \to \mathrm{Ran}(T)$ is a bounded linear bijection between Banach spaces. By the open mapping theorem it has a bounded inverse $S: \mathrm{Ran}(T) \to W^{2,r}(\Om)$, and thus for any $u \in W^{2,r}(\Om)$ one has 
    \[
    \norm{u}_{W^{2,r}(\Om)} = \norm{STu}_{W^{2,r}(\Om)} \leq C \norm{Tu}_{X}.
    \]
    This proves the claim for $q=0$.

    Next we assume that $\norm{q}_{L^r} \leq M$. Given $u \in W^{2,r}(\Om)$, the result proved above for $q=0$ yields 
    \begin{equation} \label{u_wtwor_intermediate}
    \norm{u}_{W^{2,r}(\Om)} \leq C (\norm{(\Delta+q)u}_{L^{r}(\Om)} + \norm{qu}_{L^r(\Om)} + \norm{u|_{\p \Om}}_{W^{2-1/r,r}(\p\Om)} + \norm{u}_{H^1(\Om)}).
    \end{equation}
    By Sobolev embedding, 
    \begin{align*}
    \norm{qu}_{L^{r}(\Om)} \leq M \norm{u}_{L^{\infty}(\Om)} \leq C M \norm{u}_{W^{s_\theta,p_\theta}(\Om)}
\end{align*}
for some sufficiently small $\theta > 0$, where 
\[
s_\theta = (1-\theta) \cdot 2 + \theta \cdot 1, \qquad \frac{1}{p_\theta} = (1-\theta) \frac{1}{r} + \theta \frac{1}{2}.
\]
Now using interpolation we get
\begin{equation*}
    \norm{u}_{W^{s_\theta,p_\theta}(\Om)}\leq \norm{u}_{W^{1,2}(\Om)}^{\theta}\norm{u}_{W^{2,r}(\Om)}^{1-\theta}.
\end{equation*}
Then using Young's inequality with $\e$ gives  
\begin{equation*}
    \norm{q u}_{L^{r}(\Om)}\leq C M \norm{u}_{H^1(\Om)}^{\theta}\norm{u}_{W^{2,r}(\Om)}^{1-\theta} \leq \e\norm{u}_{W^{2,r}(\Om)} + C_{\e,M}\norm{u}_{H^1(\Om)}.
\end{equation*}
Choosing $\e$ small enough we can absorb the $\e\norm{u}_{W^{2,r}(\Om)}$ term to the left hand side of \eqref{u_wtwor_intermediate}. This proves the result.
\end{proof}

The next lemma gives an estimate for $\norm{u}_{W^{2,r}(\Om)}$ in terms of $\norm{(\Delta+q)u}_{L^r(\Om)}$ and the Cauchy data of $u$. The proof invokes a unique continuation result for $L^r$ potentials.

\begin{Lemma} \label{lemma_linear_cd_estimate}
Let $r > n/2$ and $M > 0$. There is $C > 0$ depending on $M$ such that for any $q \in L^r(\Om)$ with $\norm{q}_{L^r} \leq M$ and  for any $u \in W^{2,r}(\Om)$, one has 
\[
\norm{u}_{W^{2,r}(\Om)} \leq C(\norm{(\Delta+q)u}_{L^r(\Om)} + \norm{u|_{\p \Om}}_{W^{2-1/r,r}(\p \Om)} + \norm{\p_{\nu} u|_{\p \Om}}_{W^{1-1/r,r}(\p \Om)}).
\]
\end{Lemma}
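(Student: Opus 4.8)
The plan is to combine the regularity estimate of Lemma \ref{lemma_cauchy_data_1} with a unique continuation argument that controls the $H^1$ norm of $u$ in terms of its Cauchy data and the source $(\Delta+q)u$. Starting from Lemma \ref{lemma_cauchy_data_1}, it suffices to show that there is $C$ depending only on $M$ such that
\[
\norm{u}_{H^1(\Om)} \leq C(\norm{(\Delta+q)u}_{L^r(\Om)} + \norm{u|_{\p \Om}}_{W^{2-1/r,r}(\p \Om)} + \norm{\p_{\nu} u|_{\p \Om}}_{W^{1-1/r,r}(\p \Om)})
\]
for all $q$ with $\norm{q}_{L^r} \leq M$ and all $u \in W^{2,r}(\Om)$; plugging this into Lemma \ref{lemma_cauchy_data_1} (and noting $\norm{u|_{\p\Om}}$ already appears there) gives the desired bound. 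So the heart of the matter is a quantitative unique continuation estimate with a uniform constant over the family $\{q : \norm{q}_{L^r} \leq M\}$.

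I would prove the $H^1$ bound by contradiction and compactness. Suppose it fails: then there are $q_k \in L^r(\Om)$ with $\norm{q_k}_{L^r} \leq M$ and $u_k \in W^{2,r}(\Om)$ with $\norm{u_k}_{H^1(\Om)} = 1$ but $\norm{(\Delta+q_k)u_k}_{L^r(\Om)} + \norm{u_k|_{\p\Om}}_{W^{2-1/r,r}(\p\Om)} + \norm{\p_\nu u_k|_{\p\Om}}_{W^{1-1/r,r}(\p\Om)} \to 0$. Writing $F_k = (\Delta + q_k)u_k$, Lemma \ref{lemma_cauchy_data_1} shows $\norm{u_k}_{W^{2,r}(\Om)}$ is bounded (by $C(\norm{F_k}_{L^r} + \norm{u_k|_{\p\Om}}_{W^{2-1/r,r}} + 1)$), so after passing to a subsequence $u_k \rightharpoonup u$ weakly in $W^{2,r}(\Om)$ and, by compact Sobolev embedding, $u_k \to u$ strongly in $H^1(\Om)$, hence $\norm{u}_{H^1(\Om)} = 1$; also $q_k \rightharpoonup q$ weakly in $L^r(\Om)$ with $\norm{q}_{L^r} \leq M$. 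One then checks that the weak limits satisfy $(\Delta + q)u = 0$ in $\Om$ and $u|_{\p\Om} = \p_\nu u|_{\p\Om} = 0$: the term $q_k u_k$ converges (weak-times-strong, using that $u_k \to u$ strongly in every $L^t$ with $t < \tfrac{2n}{n-2}$ and $q_k \rightharpoonup q$ in $L^r$, with $1/r + 1/t < 1$ for suitable $t$) to $qu$ in an appropriate sense, so the equation passes to the limit; and the boundary terms tend to $0$ in the trace spaces, which are preserved under the convergence. Now a unique continuation theorem for Schrödinger operators with $L^r$ potentials, $r > n/2$ (e.g.\ the results of Jerison–Kenig type, or Koch–Tataru), applied to the solution $u$ of $(\Delta+q)u = 0$ with vanishing Cauchy data on the open boundary, forces $u \equiv 0$ in $\Om$, contradicting $\norm{u}_{H^1(\Om)} = 1$.

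The main obstacle is the unique continuation step, specifically ensuring it is applicable at the regularity $q \in L^r$, $r > n/2$, and that the compactness argument really delivers a limit $u$ with genuinely zero Cauchy data in the sense required by that theorem. Concretely, one must (i) quote a weak unique continuation / Cauchy uniqueness result valid for potentials merely in $L^{n/2+}$, extending $u$ across $\p\Om$ by $0$ to a slightly larger domain and extending $q$ by $0$ as well, so that the extended $u$ solves $(\Delta+q)u = 0$ weakly across the boundary and then vanishes on an open set, hence everywhere by unique continuation; (ii) make sure the convergence $q_k u_k \to qu$ is justified in $H^{-1}$ or $L^1_{loc}$, which uses Rellich compactness for $u_k$ in $L^t$, $t < 2n/(n-2)$, against the weak $L^r$ convergence of $q_k$. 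These are standard but require care; everything else (the a priori bound, extraction of subsequences, passing the boundary conditions to the limit) is routine given Lemma \ref{lemma_cauchy_data_1} and Sobolev embedding.
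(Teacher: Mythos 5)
Your proposal is correct and follows essentially the same route as the paper: argue by contradiction, normalize $\norm{u_m}_{H^1(\Om)}=1$, use Lemma \ref{lemma_cauchy_data_1} to get a uniform $W^{2,r}$ bound, extract weakly convergent subsequences, pass to the limit in the equation, and invoke Jerison--Kenig unique continuation after extending the limit and the potential by zero to all of $\mR^n$. The paper directly contradicts the full $W^{2,r}$ estimate rather than first reducing to an $H^1$ estimate, and passes to the limit in $(\Delta+q_m)u_m$ via a product estimate in $W^{-\eps,r}$ instead of your $L^t$-times-weak-$L^r$ argument, but these are cosmetic differences in the same proof.
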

\begin{proof}
We argue by contradiction and assume that for any $m$ there exist $q_m$ with $\norm{q_m}_{L^r} \leq M$ and $u_m \in W^{2,r}$ such that 
\begin{equation} \label{eq_contradiction}
    \norm{u_m}_{W^{2,r}(\Om)} > m (\norm{(\Delta + q_m) u_m}_{L^{r}(\Om)} + \norm{u_m}_{W^{2-1/r,r}(\p \Om)} + \norm{\p_{\nu} u_m}_{W^{1-1/r,r}(\p \Om)}).
\end{equation}
On the other hand, Lemma \ref{lemma_cauchy_data_1} implies that 
\[
\norm{u_m}_{W^{2,r}(\Om)} \leq C (\norm{(\Delta+q_m)u_m}_{L^{r}(\Om)} + \norm{u_m}_{W^{2-1/r,r}(\p\Om)} + \norm{u_m}_{H^1(\Om)}).
\]
Normalize $u_m$ so that $\norm{u_m}_{H^1(\Om)} = 1$. Then using \eqref{eq_contradiction} yields 
\begin{align*}
    \norm{u_m}_{W^{2,r}(\Om)} \leq C(\frac{1}{m}\norm{u_m}_{W^{2,r}(\Om)} + 1).
\end{align*}
Then $\norm{u_m}_{W^{2,r}(\Om)} \leq C$ uniformly when $m$ is sufficiently large. Using that $W^{2,r}(\Om)$ is a reflexive Sobolev space, and by using compact Sobolev embedding, for any $\eps > 0$ there is a subsequence, still denoted by $(u_m)$, such that 
\begin{align*}
u_m \rightharpoonup u & \text{ weakly in $W^{2,r}(\Om)$}, \\
u_m \to u &\text{ in $W^{2-\eps,r}(\Om)$}.
\end{align*}
Passing to a further subsequence, we may further assume that 
\begin{align*}
q_m \rightharpoonup q & \text{ weakly in $L^r(\Om)$}, \\
q_m \to q & \text{ in $W^{-\eps,r}(\Om)$}.
\end{align*}

On the other hand, from \eqref{eq_contradiction} and the bound $\norm{u_m}_{W^{2,r}(\Om)} \leq C$ we see that 
\[
u_m|_{\p \Om} \to 0, \qquad \p_{\nu} u_m|_{\p \Om} \to 0, \qquad (\Delta+q_m)u_m \to 0
\]
in the respective spaces. Now uniqueness of limits implies that $u|_{\p \Omega} = 0$ and $\p_{\nu} u|_{\p \Om} = 0$. Here we chose $\eps > 0$ so that $2-\eps > 1-1/r$, which ensures that $\p_{\nu} u|_{\p \Om}$ is well defined by the trace theorem. We also claim that 
\[
(\Delta+q_m)u_m \to (\Delta+q)u \text{ in $W^{-\eps,r}(\Om)$.}
\]
This follows since $q_m u_m \to qu$ by the estimate $\norm{ab}_{W^{-\eps,r}} \leq C \norm{a}_{W^{-\eps,r}} \norm{b}_{C^{\eps+\delta}} \leq C \norm{a}_{W^{-\eps,r}} \norm{b}_{W^{2-\eps,r}}$ (see \cite[Theorem 3.3.2]{Triebel1983}), which holds for some $\delta > 0$ by Sobolev embedding when $\eps > 0$ is chosen really small using that $r > n/2$. Now by uniqueness of limits, we see that $u \in W^{2,r}(\Om)$ is a (distributional, and hence also strong) solution of 
\[
(\Delta+q)u = 0 \text{ in $\Om$.}
\]
We extend $u$ and $q$ by zero to $\mR^n$ to obtain a compactly supported solution $u \in W^{2,r}(\mR^n)$ to $(\Delta+q)u=0$. Consequently, $u \equiv 0$ by unique continuation (see \cite[Theorem 6.3 and Remark 6.7]{JerisonKenig1985}), which contradicts the fact that $\norm{u}_{H^1(\Om)} = \lim \norm{u_m}_{H^1(\Om)} = 1$.
\end{proof}

The following lemma gives a uniqueness result and estimate for solutions of the semilinear equation, assuming an a priori bound for the $W^{2,r}$ norm.

%For $n \in \{2,3\}$ we include the additional assumption $r \geq 2$, which will also appear in Lemma \ref{lemma_projection_operator}.

\begin{Lemma} \label{lem_carleman_wtwor}
Let $a \in L^r(\Om, C^{1,\alpha}(\mR))$ where $r > n/2$, and let $u_0 \in W^{2,r}(\Om)$ solve $\Delta u_0 + a(x,u_0) = 0$ in $\Om$. If  $u \in W^{2,r}(\Om)$ is any other solution of $\Delta u + a(x,u) = 0$ in $\Om$ and $\norm{u}_{W^{2,r}(\Om)}$, $\norm{u_0}_{W^{2,r}(\Om)} \leq M$, then 
\begin{equation}\label{estimate_lem_carleman}
\norm{u-u_0}_{W^{2,r}(\Om)} \leq C(M,a) (\norm{u-u_0}_{W^{2-1/r,r}(\p\Om)} + \norm{\p_{\nu}(u-u_0)}_{W^{1-1/r,r}(\p\Om)}).
\end{equation}
\end{Lemma}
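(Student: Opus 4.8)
The plan is to reduce the nonlinear estimate to the linear Cauchy data estimate of Lemma \ref{lemma_linear_cd_estimate} applied to the difference $v := u - u_0$. Writing $q(x) = \int_0^1 \p_u a(x, u_0(x) + s(u(x)-u_0(x)))\,ds$, the fundamental theorem of calculus gives $a(x,u) - a(x,u_0) = q\,v$ pointwise, so subtracting the two equations yields $(\Delta + q)v = 0$ in $\Om$. The point of the hypothesis $a \in L^r(\Om, C^{1,\alpha}(\mR))$ is that $\p_u a(x,\cdot)$ is continuous and, because of the $L^r(\Om, C^{1,\alpha})$ bound, we get $\sup_{|t|\le M}|\p_u a(x,t)| \in L^r(\Om)$; since $u_0, u$ have $L^\infty$ norms controlled by $\norm{u_0}_{W^{2,r}}, \norm{u}_{W^{2,r}} \le CM$ via Sobolev embedding, this produces a bound $\norm{q}_{L^r(\Om)} \le M'$ where $M'$ depends only on $M$ and $a$.

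Then I would simply apply Lemma \ref{lemma_linear_cd_estimate} with this $q$ (legitimate since $\norm{q}_{L^r}\le M'$), obtaining
\[
\norm{v}_{W^{2,r}(\Om)} \le C(M') \big( \norm{(\Delta+q)v}_{L^r(\Om)} + \norm{v|_{\p\Om}}_{W^{2-1/r,r}(\p\Om)} + \norm{\p_\nu v|_{\p\Om}}_{W^{1-1/r,r}(\p\Om)} \big),
\]
and since $(\Delta+q)v = 0$ the first term drops, giving exactly \eqref{estimate_lem_carleman} with $C(M,a) := C(M')$.

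The only mild subtlety — and the step I expect to need the most care — is verifying measurability and the $L^r$ bound for the averaged potential $q$: one must check that $x \mapsto \p_u a(x, u_0(x) + s(u(x)-u_0(x)))$ is measurable for each fixed $s$ (which follows from joint measurability of $(x,t)\mapsto \p_u a(x,t)$, itself a consequence of $a \in L^r(\Om, C^{1,\alpha}(\mR))$ and continuity in $t$), and then that the $s$-integral is measurable in $x$ by Fubini and dominated by the $L^r$ function $\sup_{|t| \le CM}|\p_u a(x,t)|$. Everything else is a direct citation of the already-proved linear lemma.
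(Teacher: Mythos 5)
Your proposal is correct and follows essentially the same route as the paper: both define the averaged potential $q = \int_0^1 \p_u a(x,(1-t)u_0 + tu)\,dt$ so that $v = u - u_0$ solves $\Delta v + qv = 0$, bound $\norm{q}_{L^r}$ in terms of $M$ and $a$ via Sobolev embedding and the $L^r(\Om, C^{1,\alpha})$ hypothesis, and then invoke Lemma \ref{lemma_linear_cd_estimate}. The extra care you flag about measurability of $q$ is sound but is handled implicitly in the paper.
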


\begin{proof}
Let $v=u-u_0$. Then 
\begin{equation} \label{deltav_equat}
-\Delta v = a(x,u) - a(x,u_0) = \left[ \int_0^1 \p_u a(x,(1-t)u_0 + t u) \,dt \right] v.
\end{equation}
Writing $q = \int_0^1 \p_u a(x,(1-t)u_0 + t u) \,dt$ and using that $|u|, |u_0| \leq CM$ by Sobolev embedding, we get that 
\[
\norm{q}_{L^r} \leq \int_0^1 \norm{\p_u a(x,(1-t)u_0 + tu)}_{L^r} \,dt \leq \int_0^1 \left[ \int_{\Om} \sup_{|s| \leq C M} |\p_u a(x,s)|^r \,dx \right]^{1/r} \,dt \leq C(M,a).
\]
Since $v \in W^{2,r}(\Om)$ solves $\Delta v + qv = 0$, Lemma \ref{lemma_linear_cd_estimate} yields the required result.
\end{proof}

The next technical lemma, which establishes the existence of a bounded projection operator, will be needed for applying the implicit function theorem when proving that $T_{a_2,w_1}$ is $C^1$. This is the only place where we need the additional assumption $r \geq 2$ for $n \in \{2, 3 \}$.

\begin{Lemma} \label{lemma_projection_operator}
Let $q \in L^r(\Om)$ where $r > n/2$ and $r \geq 2$. Define the spaces 
\[
Y = W^{2,r}_0(\Om), \qquad Z = (\Delta+q)(Y),
\]
where $Z$ is equipped with the $L^r(\Om)$ topology. 
Then $Y$ and $Z$ are Banach spaces and $\Delta+q: Y \to Z$ is an isomorphism. Moreover, there is a bounded linear operator 
\[
P: L^r(\Om) \to Z
\]
such that $P(z) = z$ for all $z \in Z$. It is given by $P(u) = (\Delta+q)y$ where $y \in W^{2,r}_0(\Om)$ is the unique solution of 
\[
\begin{cases}
(\Delta+q)^2 y &= (\Delta + q)u \text{ in $\Om$}, \\
y|_{\p \Om} = \p_{\nu} y|_{\p \Om} &= 0.
\end{cases}
\]
\end{Lemma}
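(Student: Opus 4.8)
The plan is to read off the assertions about $Y$, $Z$ and the isomorphism directly from Lemma~\ref{lemma_linear_cd_estimate}, and then to construct $P$ by solving the fourth order Dirichlet problem, obtaining the solution first in $H^2_0(\Om)$ and afterwards bootstrapping it up to $W^{2,r}_0(\Om)$. For the first part: $Y=W^{2,r}_0(\Om)$ is closed in $W^{2,r}(\Om)$, hence a Banach space, and $\Delta+q\colon Y\to L^r(\Om)$ is bounded because $W^{2,r}(\Om)\hookrightarrow L^\infty(\Om)$ (as $r>n/2$), so $\norm{qy}_{L^r(\Om)}\le\norm{q}_{L^r(\Om)}\norm{y}_{L^\infty(\Om)}$. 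Since $y\in W^{2,r}_0(\Om)$ has vanishing Cauchy data, Lemma~\ref{lemma_linear_cd_estimate} gives $\norm{y}_{W^{2,r}(\Om)}\le C\norm{(\Delta+q)y}_{L^r(\Om)}$; hence $\Delta+q$ is injective and bounded below on $Y$, the range $Z=(\Delta+q)(Y)$ is closed in $L^r(\Om)$ (so $Z$ is Banach), and $\Delta+q\colon Y\to Z$ is a bounded bijection with bounded inverse, i.e.\ an isomorphism.

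Next I would fix $u\in L^r(\Om)$ and look for $y\in W^{2,r}_0(\Om)$ solving $(\Delta+q)^2y=(\Delta+q)u$ in $\mDp(\Om)$ (the boundary conditions being built into the space $W^{2,r}_0(\Om)$). Pairing against $C_c^\infty(\Om)$ and moving one factor $\Delta+q$ across (no boundary terms occur), this is equivalent to the weak identity $\int_\Om(\Delta+q)y\,(\Delta+q)\phi\,dx=\int_\Om u\,(\Delta+q)\phi\,dx$, whose two sides extend continuously to $\phi\in H^2_0(\Om)$, a space that contains $W^{2,r}_0(\Om)$ because $r\ge2$. I would solve this weak problem in $H^2_0(\Om)$. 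The bilinear form $B(y,\phi)=\int_\Om(\Delta+q)y\,(\Delta+q)\phi\,dx$ is bounded on $H^2_0(\Om)$, since for $y\in H^2_0(\Om)$ Sobolev embedding together with $r>n/2$ (and $r\ge2$ when $n\in\{2,3\}$) gives $qy\in L^2(\Om)$, so $(\Delta+q)y\in L^2(\Om)$. The operator on $H^2_0(\Om)$ induced by $B$ differs by a compact operator from the isomorphism induced by the coercive form $\int_\Om\Delta y\,\Delta\phi\,dx$ (on $H^2_0(\Om)$ the norms $\norm{\Delta y}_{L^2(\Om)}$ and $\norm{y}_{H^2(\Om)}$ are equivalent, and $y\mapsto qy$ factors through a compact Sobolev embedding $H^2(\Om)\hookrightarrow L^s(\Om)$, available since $r>n/2$), hence it is Fredholm of index $0$; it is injective because $B(y,y)=\norm{(\Delta+q)y}_{L^2(\Om)}^2=0$ forces $(\Delta+q)y=0$, whence $y\equiv0$ by the unique continuation underlying Lemma~\ref{lemma_linear_cd_estimate}. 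As $\phi\mapsto\int_\Om u\,(\Delta+q)\phi\,dx$ is a bounded functional on $H^2_0(\Om)$ of norm $\le C\norm{u}_{L^r(\Om)}$ (using $r\ge2$ for $\Delta\phi\in L^{r'}(\Om)$ and $r>n/2$ for the term $q\phi$), this gives a unique $y_u\in H^2_0(\Om)$ solving the weak identity, depending linearly and boundedly on $u$.

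The hard part is upgrading $y_u$ from $H^2_0(\Om)$ to $W^{2,r}_0(\Om)$: a naive bootstrap directly on $y_u$ fails, because $(\Delta+q)y_u-u$ need only be $L^2$ near $\p\Om$, so one must instead bootstrap on the fourth order equation, where that residual does not appear. Moving the lower order terms of $(\Delta+q)^2$ to the right-hand side, $y_u$ solves $\Delta^2y_u=g$ weakly in $H^2_0(\Om)$ with $g=(\Delta+q)u-\Delta(qy_u)-q\,\Delta y_u-q^2y_u$. If $y_u\in W^{2,p}_0(\Om)$ with $2\le p\le r$, estimating the terms of $g$ by Hölder's inequality and Sobolev embedding (and using $r>n/2$) gives $g\in W^{-2,\hat p}(\Om)$ with $1/\hat p=\max\bigl(1/r,\ 1/p-(2/n-1/r)\bigr)$, and then $L^p$-regularity for the biharmonic Dirichlet problem yields $y_u\in W^{2,\hat p}_0(\Om)$ — consistent with $y_u\in H^2_0(\Om)$ since $\hat p\ge2$ and $\Delta^2$ is injective on $H^2_0(\Om)$. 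As $2/n-1/r>0$, iterating this finitely many times reaches $y_u\in W^{2,r}_0(\Om)$; in the last step $W^{2,r}(\Om)\hookrightarrow L^\infty(\Om)$ keeps $g\in W^{-2,r}(\Om)$.

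Finally I would set $P(u)=(\Delta+q)y_u\in Z$. Any solution of the fourth order problem in $W^{2,r}_0(\Om)$ lies in $H^2_0(\Om)$ and satisfies the weak identity, hence equals $y_u$, so $P\colon L^r(\Om)\to Z$ is a well-defined linear map with the description in the statement. It is bounded by the closed graph theorem: if $u_j\to u$ and $P(u_j)\to w$ in $L^r(\Om)$, then $\norm{y_{u_j}}_{W^{2,r}(\Om)}\le C\norm{P(u_j)}_{L^r(\Om)}$ stays bounded, so $y_{u_j}\rightharpoonup y$ in $W^{2,r}_0(\Om)$ along a subsequence; then $w=(\Delta+q)y$ and, passing to the limit in the equation, $y=y_u$, i.e.\ $w=P(u)$. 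Lastly, if $u=(\Delta+q)\psi\in Z$ with $\psi\in W^{2,r}_0(\Om)$, then $\psi$ itself solves the fourth order problem with datum $u$, so $y_u=\psi$ and $P(u)=(\Delta+q)\psi=u$; hence $P(z)=z$ for all $z\in Z$. The one genuine obstacle is the boundary regularity step above — obtaining $W^{2,r}$- rather than merely $H^2$-regularity for the fourth order solution — which is precisely where the extra hypothesis $r\ge2$ (for $n\in\{2,3\}$) is needed.
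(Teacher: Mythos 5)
Your proposal is correct and follows essentially the same route as the paper: establish the isomorphism $\Delta+q\colon Y\to Z$ via Lemma~\ref{lemma_linear_cd_estimate}, solve the fourth order Dirichlet problem weakly in $H^2_0(\Om)$ using the bilinear form $B(y,\phi)=\int_\Om(\Delta+q)y\,(\Delta+q)\phi\,dx$ together with unique continuation, and bootstrap via $\Delta^2 y = g$ to reach $W^{2,r}_0(\Om)$, with $r\ge2$ entering exactly where you say. The only cosmetic differences are that you phrase the $H^2_0$-solvability as a compact-perturbation/Fredholm argument whereas the paper uses a G\aa rding inequality, Riesz representation and the spectral theorem for the solution operator (equivalent in substance), and that you explicitly record the boundedness of $P$ and the projection identity $P(z)=z$, which the paper leaves as immediate consequences.
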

\begin{proof}
Note that $Y$ is a closed subspace of $W^{2,r}(\Om)$ and $Z$ is a closed subspace of $L^r(\Om)$ by Lemma \ref{lemma_linear_cd_estimate}, so both spaces are Banach spaces. Lemma \ref{lemma_linear_cd_estimate} also implies that $\Delta+q: Y \to Z$ is injective, and by definition it is surjective. The open mapping theorem ensures that this map is an isomorphism.

We now assume that $n \geq 4$ and $r > n/2$ (the case $n \in \{ 2, 3 \}$ and $r \geq 2$ follows by straightforward modifications). We first consider the solvability of the fourth order equation in $H^2_0(\Om)$. Note that the bilinear form 
\[
B(y,w) = ((\Delta+q)y, (\Delta+q)w)_{L^2(\Om)}, \qquad y,w \in H^2_0(\Om),
\]
is well defined when $q \in L^r(\Om)$ since $L^r H^2  \subseteq  L^{\frac{n}{2}+} L^{\frac{2n}{n-4}}  \subseteq  L^{2+}$ using the assumption that $n \geq 4$, with a small modification if $n=4$. It satisfies 
\[
B(y,y)^{1/2} \geq \norm{\Delta y}_{L^2} - \norm{q}_{L^r} \norm{y}_{L^{\frac{2r}{r-2}}}.
\]
The interpolation inequality $\norm{y}_{L^{\frac{2r}{r-2}}} \leq \norm{y}_{L^2}^{\theta} \norm{y}_{L^{\frac{2n}{n-4}}}^{1-\theta} \leq C \norm{y}_{L^2}^{\theta} \norm{y}_{H^2}^{1-\theta}$ for some $\theta \in (0,1)$, with a small modification if $n=4$, and Young's inequality imply that for any $\eps > 0$ there is $C_{\eps} > 0$ with 
\[
B(y,y)^{1/2} \geq \norm{\Delta y}_{L^2} - \eps \norm{y}_{H^2} - C_{\eps} \norm{y}_{L^2}.
\]
Now $\norm{\Delta y}_{L^2} \geq c \norm{y}_{H^2}$ for some $c > 0$ since $y \in H^2_0$, so by choosing $\eps$ small enough we obtain 
\[
B(y,y) \geq c \norm{y}_{H^2}^2 - C \norm{y}_{L^2}^2.
\]
Thus the bilinear form $B(y,w) + C(y,w)_{L^2(\Om)}$ is positive definite on $H^2_0(\Om)$. The Riesz representation theorem implies unique solvability of the problem 
\[
(\Delta+q)^2 y + Cy = F \text{ in $\Om$}, \qquad y \in H^2_0(\Om),
\]
for any $F \in H^{-2}(\Om)$. The spectral theorem applied to the solution operator shows that there is a countable sequence of eigenvalues. Now any $y \in H^2_0(\Om)$ with $B(y,y) = 0$ satisfies $(\Delta+q)y = 0$, so $y=0$ by unique continuation. This implies that $0$ is not an eigenvalue of the equation 
\[
(\Delta+q)^2 y = F \text{ in $\Om$}, \qquad y \in H^2_0(\Om).
\]
Therefore this equation has a unique solution $y \in H^2_0(\Om)$ for any $F \in H^{-2}(\Om)$.

Next suppose that $F \in W^{-2,r}(\Om)$ where $r \geq 2$. Then the solution $y \in H^2_0(\Om)$ satisfies 
\begin{equation} \label{fourth_order_eq_bootstrap}
\Delta^2 y = F - q \Delta y - \Delta(qy) - q^2 y
\end{equation}
with zero Cauchy data. Here $F \in L^r(\Om) \subseteq L^2(\Om)$, which belongs to $W^{-2,2+\delta}(\Om)$ for some $\delta > 0$ by Sobolev embedding. Similarly, since $q \in L^r$ for $r > n/2$ and $y \in H^2$, the expression $q(\Delta y)$ is in $L^p$ where $p = \frac{2r}{r+2} > \frac{2n}{n+4}$. Thus $q(\Delta y)$ can be integrated against functions in $W^{2,2-\eps}_0 \subset L^{\frac{2n}{n-4} - \tilde{\eps}}$ for some $\eps, \tilde{\eps} > 0$, so $q(\Delta y)$ is in $W^{-2,2+\delta}$ for some $\delta > 0$. By similar arguments we see that the right hand side of \eqref{fourth_order_eq_bootstrap} is in $W^{-2,2+\delta}(\Om)$ for suitable $\delta > 0$ depending on $n$. Since the equation has smooth coefficients, the solution must be in $W^{2,2+\delta}(\Om)$, see \cite[Theorem 2.22]{Gazzola_book} or \cite{browder62}. Bootstrapping this regularity argument finitely many times shows that $y \in W^{2,r}(\Om)$. We omit the details.
\end{proof}

We can now begin the

\begin{proof}[Proof of Proposition \ref{prop_second_solution_map}]
Let $u_{1,v} = S_{a_1,w_1}(v), u_{2,v} = T_{a_2,w_1}(v)$ and $r_v = u_{1,v}-u_{2,v}$ where $v \in V_{q_1,\delta_1}$. Here $u_{1,v}$ is $C^1$ with respect to $v$, and we wish to show that $r_v$ is also $C^1$ in $v$. Note that $r_v$ solves 
\[
(\Delta + q_2) r_v = q_2 r_v + a_2(x,u_{1,v}-r_v)-a_1(x,u_{1,v}).
\]
Since $r_v \in W^{2,r}_0(\Om)$, denoting by $P$ the projection operator and $Z$ the space in Lemma \ref{lemma_projection_operator} for $q=q_2$, we also have  
\begin{equation} \label{rv_proj_eq}
(\Delta + q_2) r_v = P(q_2 r_v + a_2(x,u_{1,v}-r_v)-a_1(x,u_{1,v})).
\end{equation}
To show $C^1$ dependence in $v$, we define the map 
\[
F: W^{2,r}_0(\Om) \times V_{q_1,\delta_1} \to Z, \ \ F(r,v) = (\Delta + q_2) r - P(q_2 r + a_2(x,u_{1,v}-r)-a_1(x,u_{1,v})).
\]
The projection operator $P$ ensures that $F$ indeed maps into $Z$. The map $F$ is $C^1$ by the assumptions on $a_j$ and $u_{1,v}$, one has $F(r_0,0) = 0$ by \eqref{rv_proj_eq}, and 
\[
(D_r F)_{(r_0,0)}\tilde{r} = (\Delta+q_2) \tilde{r} - P(q_2 \tilde{r} - \p_u a_2(x, w_2) \tilde{r}) = (\Delta+q_2) \tilde{r}.
\]
By Lemma \ref{lemma_projection_operator} the map $(D_r F)_{(r_0,0)} = \Delta + q_2$ is an isomorphism $W^{2,r}_0(\Om) \to Z$.

The implicit function theorem ensures that there is a $C^1$ map $R: V_{q_1,\tilde{\delta}_1} \to W^{2,r}_0(\Om)$ for some $\tilde{\delta}_1 > 0$ such that $F(R(v),v) = 0$ for $v$ near $0$ and 
\[
F(r,v) = 0 \text{ for $(r,v)$ near $(r_0,0)$} \quad \Longleftrightarrow \quad r = R(v).
\]
We also have $F(r_v,v) = 0$ for $v \in V_{q_1,\delta_1}$ by \eqref{rv_proj_eq}, and we would like to show that $r_v$ is close to $r_0 = w_1-w_2$. This would yield $r_v = R(v)$ by the uniqueness statement above. Now the properties of $S_{a_1,w_1}$ imply 
\[
\norm{u_{1,v}-w_1}_{W^{2,r}} \leq 2 \norm{v}_{W^{2,r}}
\]
for $v$ small enough. For $u_{2,v}$ we use Lemma \ref{lem_carleman_wtwor} with $u = u_{2,v}$, which satisfies $\norm{u_{2,v}}_{L^{\infty}} \leq C \norm{u_{2,v}}_{W^{2,r}} \leq C$ by the assumption $C_{a_1}^{w_1, \delta} \subseteq C_{a_2}^{0,C}$, and $u_0 = w_2$ to obtain 
\begin{align*}
\norm{u_{2,v}-w_2}_{W^{2,r}} &\leq C (\norm{u_{1,v}-w_1}_{W^{2-1/r,r}(\p\Om)} + \norm{\p_{\nu}(u_{1,v}-w_1)}_{W^{1-1/r,r}(\p\Om)}) \leq C \norm{u_{1,v}-w_1}_{W^{2,r}} \\
 &\leq C \norm{v}_{W^{2,r}}.
\end{align*}
Here we used that $u_{2,v}$ and $w_2$ have the same Cauchy data as $u_{1,v}$ and $w_1$, respectively. Combining these two estimates gives 
\[
\norm{r_v-r_0}_{W^{2,r}} \leq \norm{u_{1,v}-w_1}_{W^{2,r}} + \norm{u_{2,v}-w_2}_{W^{2,r}} \leq C \norm{v}_{W^{2,r}}.
\]
Thus $r_v$ is close to $r_0$ when $v$ is small as required, so $r_v = R(v)$ depends in a $C^1$ way on $v$.
\end{proof}

\section{Inverse problem}\label{sec_proof_of_main}

In this section we prove Theorems \ref{thm_main1} and \ref{thm_main2}. The proof is almost the same as the proofs of the main results in \cite{JNS2023} and thus we do not give the full details here.

Recall that we have $a_1, a_2 \in L^r(\Om, C^{1,\alpha}(\mR))$, $r> n/2$ and $r\geq 2$, and $w_1\in W^{2,r}(\Om)$ solves $\Delta w_1 + a_1(x,w_1)=0$ in $\Om$. We further assume that for some $\delta, C > 0$ we have $C^{w,\delta}_{a_1}\subseteq C^{0,C}_{a_2}$ and define
\begin{align*}
    V_q&=\{ v\in W^{2,r}(\Om) : \Delta v + qv = 0\text{ in } \Om\}\\
    V_{q,\delta} &= \{ v\in V_q : \norm{v}_{W^{2,r}(\Om)}<\delta\},
\end{align*}
where $q=\p_ua_1(x,w_1)$. Now for any $v\in V_{q,\delta}$, $\delta$ small, Propositions \ref{prop_first_solution_map} and \ref{prop_second_solution_map} give the solutions $u_{1,v}=S_{a_1,w_1}(v)$ and $u_{2,v}=T_{a_2,w_1}(v)$ solving $\Delta u_{j,v} + a_j(x,u_{j,v})=0$ in $\Om$. We can then prove the following lemmas (corresponding to \cite[Lemma 5.1 and Lemma 5.2]{JNS2023}):

\begin{Lemma}\label{first_lin_ident}
    Assume $C^{w,\delta}_{a_1}\subseteq C^{0,C}_{a_2}$. Then there is $\delta_1>0$ such that for any $v\in V_{q,\delta_1}$ one has
    \begin{equation*}
        \p_ua_1(x,u_{1,v}(x))=\p_ua_2(x,u_{2,v}(x)), \quad\text{for }x\in\Om.
    \end{equation*}
\end{Lemma}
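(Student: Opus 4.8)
The plan is to carry out the standard first-linearization argument for the pair $u_{1,v}$, $u_{2,v}$, using crucially that by construction these two solutions carry the \emph{same} Cauchy data on $\p\Om$. Fix $v_0\in V_{q,\delta_1}$, with $\delta_1$ small enough that Propositions \ref{prop_first_solution_map} and \ref{prop_second_solution_map} apply and a small ball around $v_0$ in $V_q$ still lies in the domains of $S_{a_1,w_1}$ and $T_{a_2,w_1}$. For $v\in V_q$ consider the curves $t\mapsto u_j^{(t)}:=u_{j,v_0+tv}$, which are $C^1$ in $t$ near $t=0$ by Propositions \ref{prop_first_solution_map} and \ref{prop_second_solution_map} and which solve $\Delta u_j^{(t)}+a_j(x,u_j^{(t)})=0$ with common Cauchy data. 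Differentiating at $t=0$, and using Lemma \ref{lemma_axu_differentiable_map} to differentiate $u\mapsto a_j(x,u)$, the velocities $\dot u_j:=\frac{d}{dt}\big|_{t=0}u_j^{(t)}\in W^{2,r}(\Om)$ satisfy $(\Delta+q_{j,v_0})\dot u_j=0$ in $\Om$, where $q_{1,v_0}=\p_u a_1(x,u_{1,v_0})$ and $q_{2,v_0}=\p_u a_2(x,u_{2,v_0})$ belong to $L^r(\Om)$ (since $W^{2,r}(\Om)\hookrightarrow L^\infty(\Om)$ for $r>n/2$ and $a_j\in L^r(\Om,C^{1,\alpha}(\mR))$). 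As $u_1^{(t)}$ and $u_2^{(t)}$ share Cauchy data for every $t$, so do $\dot u_1$ and $\dot u_2$; hence $w:=\dot u_1-\dot u_2\in W^{2,r}_0(\Om)$ and $(\Delta+q_{2,v_0})w=-(q_{1,v_0}-q_{2,v_0})\dot u_1$.

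Next I would derive the integral identity. For arbitrary $v_2\in V_{q_{2,v_0}}$, multiplying the equation for $w$ by $v_2$, integrating over $\Om$, and integrating by parts twice — legitimate because $w$ has vanishing Cauchy data and $W^{2,r}(\Om)\hookrightarrow L^\infty(\Om)$ makes all products integrable — yields
\[
\int_\Om (q_{1,v_0}-q_{2,v_0})\,\dot u_1\,v_2\,dx=0.
\]
Now $\dot u_1=DS_{a_1,w_1}(v_0)[v]$, and by Lemma \ref{S_deriv_isom} the map $DS_{a_1,w_1}(v_0)\colon V_q\to V_{q_{1,v_0}}$ is an isomorphism, so as $v$ ranges over $V_q$ the velocity $\dot u_1$ ranges over all of $V_{q_{1,v_0}}$. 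Hence
\[
\int_\Om (q_{1,v_0}-q_{2,v_0})\,v_1\,v_2\,dx=0\qquad\text{for all }v_1\in V_{q_{1,v_0}},\ v_2\in V_{q_{2,v_0}}.
\]

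To conclude I would invoke completeness of products of solutions for Schr\"odinger operators with $L^r$ potentials: extending $q_{1,v_0}$ and $q_{2,v_0}$ by zero to $\mR^n$ and inserting complex geometric optics solutions into the identity above forces the Fourier transform of $(q_{1,v_0}-q_{2,v_0})\chi_\Om$ to vanish, whence $q_{1,v_0}=q_{2,v_0}$ a.e.\ in $\Om$. This is exactly the content of \cite{Chanillo1990, Nachman1992, DKS2013} for $n\geq 3$ and of \cite{Blåsten2020} for $n=2$, all of which cover the present range $r>n/2$, $r\geq 2$. Unraveling definitions, $q_{1,v_0}=q_{2,v_0}$ reads $\p_u a_1(x,u_{1,v_0}(x))=\p_u a_2(x,u_{2,v_0}(x))$ for a.e.\ $x\in\Om$; since $v_0\in V_{q,\delta_1}$ was arbitrary (and $\delta_1$ is the least of the thresholds used above) this is the claim.

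I expect the point needing the most care to be the assertion that $\dot u_1$ exhausts the whole solution space $V_{q_{1,v_0}}$: this is where the $C^1$ structure of $S_{a_1,w_1}$ and Lemma \ref{S_deriv_isom} are indispensable, and without it the integral identity would hold only on a proper subspace and the argument would break down. A second delicate point is citing completeness of products in the correct low-regularity form — for $n=2$ the classical $L^{n/2}$ statements do not apply and one must use the sharper result of \cite{Blåsten2020}. The differentiations in $t$ and the double integration by parts are routine given Lemma \ref{lemma_axu_differentiable_map} and the Sobolev embedding $W^{2,r}(\Om)\hookrightarrow L^\infty(\Om)$.
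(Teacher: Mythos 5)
Your proposal is correct and follows essentially the same route as the paper, which in turn defers the derivation of the integral identity to \cite[Lemma 5.1]{JNS2023}: differentiate the two $C^1$ solution curves $t\mapsto S_{a_1,w_1}(v_0+tv)$ and $t\mapsto T_{a_2,w_1}(v_0+tv)$ (sharing Cauchy data for each $t$), subtract the linearized equations, integrate against a solution of the second linearized equation, and use Lemma \ref{S_deriv_isom} to let $\dot u_1$ sweep out all of $V_{q_{1,v_0}}$ before invoking completeness of products for $L^r$ potentials. You have correctly identified the two points that carry the weight of the argument — that $DS_{a_1,w_1}(v_0)$ is an isomorphism onto $V_{q_{1,v_0}}$, and that the $n=2$ case requires the sharper $L^2$ completeness result of \cite{Blåsten2020} rather than an $L^{n/2}$ statement — which is exactly how the paper handles them.
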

\begin{proof}
    The proof is based on the fact that the solution operators are $C^1$ and that the derivative $DS(v)$ is an isomorphism (see Lemma \ref{S_deriv_isom}). As in \cite[Lemma 5.1]{JNS2023}, using these and the assumption $C^{w,\delta}_{a_1}\subseteq C^{0,C}_{a_2}$ we get an integral identity
    \begin{equation*}
        \int_{\Om} \big(\p_ua_1(x,u_{1,v}(x)) - \p_ua_2(x,u_{2,v}(x))\big)v_1v_2\,dx=0
    \end{equation*}
    for any solutions $v_j$ of $(\Delta + \p_ua_j(x,u_{j,v}))v_j=0$. Using the completeness of products for such solutions (\cite{Chanillo1990, Nachman1992, DKS2013} when $n\geq3$, and \cite{Blåsten2020} for $n=2$) gives the result.
\end{proof}

\begin{Remark}
The results in \cite{Chanillo1990, Nachman1992, DKS2013, Blåsten2020} are stated in terms of DN maps, but they indeed prove the following completeness statement: if
\[ 
\int_{\Omega} f u_1 u_2 \,dx = 0
\]
for all $u_j$ solving $(\Delta+q_j)u_j = 0$ in $\Omega$, then $f=0$ (with $f \in L^{n/2}(\Om)$ for $n \geq 3$, and $f \in L^2(\Omega)$ for $n=2$).

For the case $n \geq 3$, see \cite[argument after (4.1) in proof of Theorem 1.1]{DKS2013} where $q = q_1-q_2$ can be replaced by a general function $f \in L^{n/2}$. For $n=2$ one uses the argument in \cite{Blåsten2020} instead. The point is that the lemmas in \cite[Section 5]{Blåsten2020} go through with the same proofs when $q_1-q_2$ is replaced by a general function $f$ with the same properties. The only caveat is the use of reference [12] in the beginning of \cite[proof of Theorem 2.1]{Blåsten2020}, which shows that $\Lambda_{q_1} = \Lambda_{q_2}$ implies $q_1-q_2 \in L^2(\Omega)$. However, we have already assumed $q_j \in L^2(\Omega)$ when $n=2,3$, so we do not need this step at all.
\end{Remark}

\begin{Lemma}\label{phi_ind_v}
    In the setting of Lemma \ref{first_lin_ident} the function $\varphi_v= u_{2,v}-u_{1,v}$ does not depend on $v\in V_{q,\delta_1}$.
\end{Lemma}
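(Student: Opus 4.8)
The plan is to show that the Fréchet derivative $D_v \varphi_v$ vanishes identically on the connected set $V_{q,\delta_1}$, which forces $\varphi_v$ to be constant; evaluating at $v=0$ will then identify the constant as $w_2 - w_1$, where $w_2$ is the solution of $\Delta w_2 + a_2(x,w_2)=0$ having the same Cauchy data as $w_1$.

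First I would record that, by Propositions \ref{prop_first_solution_map} and \ref{prop_second_solution_map}, the maps $v \mapsto u_{1,v} = S_{a_1,w_1}(v)$ and $v \mapsto u_{2,v} = T_{a_2,w_1}(v)$ are $C^1$ from $V_{q,\delta_1}$ into $W^{2,r}(\Om)$, so $\varphi_v = u_{2,v}-u_{1,v}$ is a $C^1$ map on $V_{q,\delta_1}$, which is an open ball (hence convex and connected) in the Banach space $V_q$. Fix $v \in V_{q,\delta_1}$ and $h \in V_q$, and set $\dot u_j = D_v u_{j,v}(h)$ and $\dot\varphi = \dot u_2 - \dot u_1 = D_v\varphi_v(h)$. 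Since $u_{1,v}$ solves $\Delta u_{1,v} + a_1(x,u_{1,v}) = \Delta v + qv$ and $h \in V_q$, differentiating in $v$ and invoking Lemma \ref{lemma_axu_differentiable_map} (this is exactly Lemma \ref{S_deriv_isom}) gives $\Delta \dot u_1 + q_v \dot u_1 = \Delta h + qh = 0$, where $q_v = \p_u a_1(x,u_{1,v})$. Similarly $u_{2,v}$ solves $\Delta u_{2,v} + a_2(x,u_{2,v}) = 0$ identically in $v$, so differentiating yields $\Delta \dot u_2 + \p_u a_2(x,u_{2,v})\dot u_2 = 0$; by Lemma \ref{first_lin_ident} one has $\p_u a_2(x,u_{2,v}) = \p_u a_1(x,u_{1,v}) = q_v$, hence $\Delta \dot u_2 + q_v \dot u_2 = 0$ as well. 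Subtracting, $\dot\varphi \in W^{2,r}(\Om)$ solves $(\Delta+q_v)\dot\varphi = 0$ in $\Om$.

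Next I would use the boundary information. Since $u_{1,v}$ and $u_{2,v}$ have the same Cauchy data on $\p\Om$ for every $v \in V_{q,\delta_1}$, the $C^1$ maps $v \mapsto \varphi_v|_{\p\Om}$ and $v \mapsto \p_{\nu}\varphi_v|_{\p\Om}$ (into $W^{2-1/r,r}(\p\Om)$ and $W^{1-1/r,r}(\p\Om)$ respectively, via the bounded trace operators) are identically zero, so their derivatives vanish and $\dot\varphi|_{\p\Om} = 0$, $\p_{\nu}\dot\varphi|_{\p\Om} = 0$. Because $\norm{u_{1,v}}_{L^\infty(\Om)} \leq C\norm{u_{1,v}}_{W^{2,r}(\Om)}$ is uniformly bounded for $v \in V_{q,\delta_1}$, one has $\norm{q_v}_{L^r(\Om)} \leq C(a_1)$, so Lemma \ref{lemma_linear_cd_estimate} applies to $\dot\varphi$ and gives $\norm{\dot\varphi}_{W^{2,r}(\Om)} = 0$, i.e.\ $\dot\varphi = 0$. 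As $h \in V_q$ was arbitrary, $D_v\varphi_v = 0$ for every $v \in V_{q,\delta_1}$. Finally, convexity of $V_{q,\delta_1}$ and the fundamental theorem of calculus along line segments show that $\varphi_v$ is constant on $V_{q,\delta_1}$, equal to $\varphi_0 = u_{2,0} - u_{1,0} = w_2 - w_1$.

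The only substantive inputs are the $C^1$ regularity of the two solution maps (Propositions \ref{prop_first_solution_map}–\ref{prop_second_solution_map}) and the unique continuation encoded in Lemma \ref{lemma_linear_cd_estimate}; everything else is the chain rule, so I do not anticipate a real obstacle. The one point requiring mild care is justifying that the derivative of $v \mapsto a_j(x,u_{j,v})$ is $\p_u a_j(x,u_{j,v})\,\dot u_j$ as a map into $L^r(\Om)$, which follows by composing Lemma \ref{lemma_axu_differentiable_map} with the $C^1$ maps $v \mapsto u_{j,v}$, together with the Sobolev embedding $W^{2,r}(\Om) \hookrightarrow L^\infty(\Om)$ that makes multiplication by $\p_u a_j(x,u_{j,v}) \in L^r(\Om)$ a bounded operator $W^{2,r}(\Om) \to L^r(\Om)$.
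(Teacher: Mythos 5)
Your proof is correct and follows essentially the same route as the paper: differentiate the family of solutions, use Lemma \ref{first_lin_ident} to identify that $\dot u_1$ and $\dot u_2$ solve the same linearized Schr\"odinger equation, and conclude from zero Cauchy data and unique continuation (via Lemma \ref{lemma_linear_cd_estimate}) that the derivative of $\varphi_v$ vanishes. The only cosmetic difference is that the paper differentiates along the ray $t\mapsto tv$ (which already suffices to show $\varphi_v=\varphi_0$) whereas you compute the full Fr\'echet derivative and invoke convexity; these are interchangeable.
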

\begin{proof}
    The function $\psi_{t}=\varphi_{tv}$ is $C^1$ in $t$, has zero Cauchy data and $z_t=\p_t\psi_t$ satisfies (using Lemma \ref{first_lin_ident})
    \begin{equation*}
        \Delta z_t + \p_ua(x,u_{1,tv})z_t=0.
    \end{equation*}
    Furthermore $z_t$ has zero Cauchy data, and hence by unique continuation (or by Lemma \ref{lemma_linear_cd_estimate}) $z_t = 0$. Thus $\psi_t$ does not depend on $t$, and $\varphi_v=\varphi_0$.
\end{proof}

The proof of our main result goes in the same way as in \cite[Section 5]{JNS2023}.

\begin{proof}[Proof of Theorem \ref{thm_main2}]
    Using Lemma \ref{phi_ind_v} we have 
    \begin{equation*}
        \Delta \varphi = \Delta (u_{2,v} - u_{1,v}) = a_1(x,u_{1,v}) - a_2(x,u_{1,v} + \varphi)
    \end{equation*}
    which implies $a_1(x,u_{1,v}(x))=(T_{\varphi}a_2)(x,u_{1,v}(x))$. It remains to show that there is $\eps > 0$ such that for any $\bar{x} \in \ol{\Om}$ and any $\lambda \in (-\eps,\eps)$ there is a small solution $v$ with $u_{1,v}(\bar{x}) = w(\bar{x}) + \lambda$.

    Recall from Proposition \ref{prop_first_solution_map} that there is a function $\delta(t)$ with $\delta(t) \to 0$ as $t \to 0$ such that 
    \[
    u_{1,v} = w_1 + v + R_v
    \]
    where $\norm{R_v}_{W^{2,r}} \leq \norm{v}_{W^{2,r}} \delta(\norm{v}_{W^{2,r}})$ for $v$ small.
    If $x_0 \in \ol{\Om}$, we use Runge approximation (Proposition \ref{prop_runge_nonvanishing_solution}) to find a solution $v = v_{x_0}$ with $v(x_0) = 4$. By continuity, $v \geq 2$ in $\ol{U}_{x_0} \cap \ol{\Om}$ for some neighborhood $U_{x_0}$ of $x_0$. Now for $x \in \ol{U}_{x_0} \cap \ol{\Om}$ and $t$ small, we have 
    \[
    |u_{1,tv}(x) - w_1(x)| \geq |tv(x)| - |R_{tv}(x)| \geq 2|t| - C \norm{R_{tv}}_{W^{2,r}} \geq 2|t| - C \norm{tv}_{W^{2,r}} \delta(\norm{tv}_{W^{2,r}}).
    \]
    Thus there is $\eps_{x_0}$ such that for $|t| \leq \eps_{x_0}$, one has 
    \[
    |u_{1,tv}(x) - w_1(x)| \geq |t|, \qquad x \in \ol{U}_{x_0} \cap \ol{\Om}.
    \]

    Next we use compactness of $\ol{\Om}$ to find points $x_1, \ldots, x_N \in \ol{\Om}$ such that $\ol{\Om} \subseteq U_{x_1} \cup \ldots \cup U_{x_N}$. Choose $\eps = \min\{\eps_{x_1}, \ldots, \eps_{x_N} \}$. Given $\bar{x} \in \ol{\Om}$, let $j$ be such that $\bar{x} \in U_{x_j}$ and define 
    \[
    \eta(t) = u_{1,tv_{x_j}}(\bar{x}) - w_1(\bar{x}).
    \]
    Then $\eta(\eps) \geq \eps$ and $\eta(-\eps) \leq -\eps$. By continuity, for any $\lambda \in (-\eps,\eps)$ there is $t$ with $\eta(t) = \lambda$. This ends the proof.
\end{proof}

Theorem \ref{thm_main1} is a direct consequence of Theorem \ref{thm_main2}.

\begin{comment}
The proof of our main result goes the same way as in \cite[Section 5]{JNS2023}. The first main difference is that we use the completeness of products of solutions result from \cite{Chanillo1990, Nachman1992} when $n\geq3$, and from \cite{Blåsten2020} for $n=2$. The first two references contain the completeness result for potentials in $L^{n/2}$ when $n \geq 3$, and the last one for potentials in $L^{4/3+\eps}$ when $n=2$. The second difference is that we use a Runge approximation result valid for $L^r$ potentials. This is proved in Section \ref{sec_runge}.

{\color{red} Should follow as in earlier paper. The main difference is that the Runge approximation result is for $L^r$ potentials and the natural approximation would be in the $W^{2,r}$ or slightly weaker norm.}
\end{comment}

\section{Runge approximation} \label{sec_runge}

The following result was needed in the proofs of the main theorems.

\begin{Proposition} \label{prop_runge_nonvanishing_solution}
Let $\Omega \subseteq \mR^n$ be an open set, and let $q \in L^r(\Om)$ where $r > n/2$. Given any $x_0 \in \Om$, there exists $u \in W^{2,r}(\Om)$ solving $(\Delta+q)u = 0$ in $\Om$ such that $u(x_0) \neq 0$.
\end{Proposition}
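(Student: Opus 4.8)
The plan is to prove the statement first on a ball and then transfer to the general open set $\Om$.

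\textbf{Step 1: reduction to a ball.} First I would observe that since the conclusion $u(x_0)\neq 0$ is local in nature at $x_0$ but the equation must hold on all of $\Om$, the cleanest route is actually the opposite: build the solution on a small ball $B = B(x_0,\rho)$ with $\ol{B}\subseteq\Om$ (or, if $\Om$ is all of $\mR^n$, on any ball), and then observe that the statement as phrased only requires a solution on $\Om$. Wait — we need $u$ defined and solving the equation on all of $\Om$, not just on $B$. So the reduction must go the other way. I would instead note that it suffices to find, for a fixed large ball $B_R\supseteq\ol{\Om}$ with $q$ extended by zero, a solution of $(\Delta+q)u=0$ in $B_R$ with $u(x_0)\neq 0$, and then restrict to $\Om$; or, more directly, work in $\Om$ itself. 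Either way, the real content is a \emph{non-vanishing} existence result.

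\textbf{Step 2: the non-vanishing construction.} The key idea: pick a ball $B=B(x_0,\rho)$ with $\ol B\subseteq\Om$, chosen small enough (using $q\in L^r$, $r>n/2$, so that $\norm{q}_{L^r(B)}$ is as small as we like by absolute continuity of the integral) that $0$ is not a Dirichlet eigenvalue of $\Delta+q$ on $B$ and the Dirichlet problem is uniquely solvable there with good estimates. Solve
\begin{equation*}
\begin{cases}
(\Delta+q)u_B = 0 & \text{in } B,\\
u_B = 1 & \text{on } \p B.
\end{cases}
\end{equation*}
By Lemma \ref{lemma_regularity_linear}, $u_B\in W^{2,r}(B)\subseteq C^{\alpha}(\ol B)$. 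Writing $u_B = 1 + w$ where $w\in H^1_0(B)$ solves $(\Delta+q)w = -q$, the energy estimate plus Sobolev embedding gives $\norm{w}_{W^{2,r}(B)}\leq C\norm{q}_{L^r(B)}$, hence $\norm{w}_{L^\infty(B)}\leq C\norm{q}_{L^r(B)} < 1$ once $\rho$ is small. Therefore $u_B(x_0) = 1 + w(x_0) \geq 1 - \norm{w}_{L^\infty} > 0$, so in particular $u_B(x_0)\neq 0$.

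\textbf{Step 3: globalize via Runge approximation.} Now $u_B$ solves the equation only on $B$, not on $\Om$. Here I would invoke a Runge approximation argument (the paper references one for $L^r$ potentials): the set of restrictions to $B$ of $W^{2,r}(\Om)$-solutions of $(\Delta+q)u=0$ in $\Om$ is dense in the set of $W^{2,r}(B)$-solutions in $B$, in a topology strong enough to control pointwise values at $x_0$ (e.g. $W^{2,r}(B')$ for a slightly smaller ball $B'\ni x_0$, which embeds in $C(\ol{B'})$). Such a Runge result follows by the standard Hahn--Banach/duality argument combined with the unique continuation principle for $L^r$ potentials (already used in Lemma \ref{lemma_linear_cd_estimate}, via \cite{JerisonKenig1985}): if a functional given by pairing with some $F$ annihilates all global solutions, one solves an adjoint equation $(\Delta+q)\phi = F$ on $\Om\setminus\ol{B'}$ with suitable support conditions and propagates vanishing by unique continuation. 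Approximating $u_B$ by a global solution $u$ with $\norm{u-u_B}_{C(\ol{B'})}<\half|u_B(x_0)|$ yields $u(x_0)\neq 0$, completing the proof.

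\textbf{Main obstacle.} The substantive point is the Runge approximation with only $L^r$ regularity on $q$ together with the requirement that approximation holds in a norm controlling the point value $u(x_0)$; this forces one to approximate in $W^{2,r}_{\mathrm{loc}}$ rather than merely $L^2$, which in turn requires the elliptic estimates of Section \ref{sec_solvability_linear} and the unique continuation theorem for $L^r$ potentials. Everything else (smallness of $\norm{q}_{L^r(B)}$, solvability on a small ball, the $1+w$ decomposition) is routine. I would expect the proof in the paper to either cite or briefly reprove this Runge statement, and the $L^\infty$ bound $\norm{w}_{L^\infty(B)} \leq C\norm{q}_{L^r(B)}$ with constant uniform as $\rho\to 0$ deserves a line of care (it follows from scaling or from the explicit Green's function estimate on balls, using $r>n/2$).
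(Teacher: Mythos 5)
Your proposal follows essentially the same path as the paper: Lemma \ref{lemma_nonzero_solution_small_ball} builds a positive solution on a small ball $B(x_0,\eps)$ by writing it as a small perturbation of a constant, and Lemma \ref{lemma_runge_wtwor} (Hahn--Banach, duality against a very weak adjoint solution, and Jerison--Kenig unique continuation) approximates it in $W^{2,r}(B(x_0,\eps)) \hookrightarrow L^{\infty}$ by solutions defined on a large ball containing $\ol{\Om}$, which then restrict to $\Om$. The one technical point you correctly flag --- that the constant in $\norm{w}_{L^{\infty}(B_\rho)}\leq C\norm{q}_{L^{r}(B_\rho)}$ is not $\rho$-independent --- is exactly why the paper first rescales to $B_1$ and derives $\norm{r_\eps}_{W^{2,r}(B_1)}\leq C\,\eps^{2-n/r}\norm{q}_{L^{r}}$, with the explicit factor $\eps^{2-n/r}$ (rather than mere absolute continuity of $\norm{q}_{L^{r}(B_\eps)}$) supplying the smallness; with that correction in place your argument is the paper's argument.
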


We begin by constructing a solution that is positive in a small ball.

\begin{Lemma} \label{lemma_nonzero_solution_small_ball}
Let \( q\in L^{r}(B_{1}) \), $r > n/2$.
There is $\eps_0 > 0$ such that for $0 < \eps \leq \eps_0$ there is a unique solution $u\in W^{2,r}(B_{\varepsilon})$ of
\begin{equation}\label{eq-nonzero-1_new}
\begin{cases}
	(\Delta+q)u = 0&\text{in }B_{\varepsilon},\\
	u = \varepsilon^{2}&\text{on }\partial B_{\varepsilon},
\end{cases}
\end{equation}
such that $u$ is positive in $\ol{B}_{\eps}$.
\end{Lemma}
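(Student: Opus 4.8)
The plan is to solve \eqref{eq-nonzero-1_new} by a perturbation argument, treating the potential term $qu$ as a small perturbation when $\eps$ is small. First I would rescale to a fixed domain: set $u(x) = v(x/\eps)$ for $x \in B_\eps$, so that $v \in W^{2,r}(B_1)$ should solve $(\Delta + \eps^2 q_\eps)v = 0$ in $B_1$ with $v = \eps^2$ on $\p B_1$, where $q_\eps(y) = q(\eps y)$. The $\eps^2$ factor in front of the potential is the key gain. Alternatively, and perhaps more cleanly, I would not rescale but instead estimate directly: write $u = \eps^2 + w$ where $w \in W^{2,r}_0(B_\eps)$ solves $\Delta w = -qu = -q(\eps^2 + w)$, i.e. $\Delta w + qw = -\eps^2 q$ in $B_\eps$, $w|_{\p B_\eps} = 0$.

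The main step is to show that for $\eps$ small enough the Dirichlet problem $\Delta w + qw = F$, $w|_{\p B_\eps} = 0$ is uniquely solvable with a norm bound that degenerates in a controlled way, and in particular that the solution is small in $L^\infty(B_\eps)$ relative to $\eps^2$. The cleanest route is the rescaled formulation: on $B_1$, the operator $\Delta + \eps^2 q_\eps$ is a small perturbation of $\Delta$ in the operator norm from $W^{2,r}_0(B_1)$ (or $H^1_0(B_1)$) to $L^r(B_1)$ (or $H^{-1}$), because $\|\eps^2 q_\eps\|_{L^r(B_1)} = \eps^{2 - n/r}\|q\|_{L^r(B_\eps)} \to 0$ as $\eps \to 0$ since $r > n/2$ forces $2 - n/r > 0$. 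Since $\Delta: W^{2,r}_0(B_1) \to L^r(B_1)$ is an isomorphism (standard elliptic theory on the smooth ball; cf.\ the use of \cite{JerisonKenig} earlier), a Neumann series argument shows $\Delta + \eps^2 q_\eps$ is an isomorphism for $\eps \le \eps_0$, and its inverse is bounded uniformly in $\eps$. This gives existence and uniqueness of $v$, hence of $u$. Writing $v = \eps^2 + \tilde w$ with $\tilde w \in W^{2,r}_0(B_1)$ solving $(\Delta + \eps^2 q_\eps)\tilde w = -\eps^4 q_\eps$, the uniform bound yields $\|\tilde w\|_{W^{2,r}(B_1)} \le C \eps^{4 - n/r}\|q\|_{L^r(B_\eps)} = o(\eps^2)$, and by Sobolev embedding $\|\tilde w\|_{L^\infty(B_1)} = o(\eps^2)$.

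Finally, positivity: since $v = \eps^2 + \tilde w$ with $\|\tilde w\|_{L^\infty(B_1)} \le \frac{1}{2}\eps^2$ for $\eps \le \eps_0$ (shrinking $\eps_0$ if necessary), we get $v \ge \frac{1}{2}\eps^2 > 0$ on $\ol{B}_1$, hence $u > 0$ on $\ol{B}_\eps$. Unscaling back to $B_\eps$ gives the claimed unique positive solution $u \in W^{2,r}(B_\eps)$ of \eqref{eq-nonzero-1_new}. Uniqueness in $W^{2,r}(B_\eps)$ follows from the isomorphism property: any two solutions differ by an element of $W^{2,r}_0(B_\eps)$ in the kernel of $\Delta + q$, which is trivial for $\eps$ small by the same perturbation estimate (equivalently, the rescaled kernel is trivial since $\Delta + \eps^2 q_\eps$ is injective on $W^{2,r}_0(B_1)$). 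The main obstacle I anticipate is bookkeeping the scaling exponents carefully — making sure the power of $\eps$ coming from $\|q_\eps\|_{L^r(B_1)}$ (namely $\eps^{-n/r}$ after the change of variables $dx = \eps^n\,dy$, so $\|q_\eps\|_{L^r(B_1)}^r = \eps^{-n}\int_{B_\eps}|q|^r$, giving $\eps^{2-n/r}$ in the perturbation term) genuinely beats the constant from the fixed-domain elliptic estimate, which it does precisely because $r > n/2$; the smoothness of $\p B_\eps$ (scaled copies of $\p B_1$) makes the elliptic regularity constant scale-invariant in the right way, but this should be stated explicitly rather than glossed over.
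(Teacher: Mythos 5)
Your proposal is correct and follows essentially the same route as the paper: rescale to $B_1$ so the potential becomes $\eps^2 q_\eps$ with $\|\eps^2 q_\eps\|_{L^r(B_1)}\le \eps^{2-n/r}\|q\|_{L^r(B_1)}\to 0$, show the rescaled operator is invertible on $W^{2,r}_0(B_1)$ with a uniform bound, peel off the constant boundary value, and conclude positivity via the Sobolev embedding $W^{2,r}\hookrightarrow L^\infty$. The only cosmetic differences are that the paper normalizes the boundary value to $1$ before perturbing (you keep the $\eps^2$), and the paper deduces invertibility and the quantitative bound from its Proposition \ref{prop_linearized_ri_wonep} together with an absorption estimate rather than phrasing it as a Neumann series, but these are the same perturbation argument.
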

\begin{proof}
A function \( u \) satisfies \eqref{eq-nonzero-1_new} if and only if \( u_{\varepsilon}(x) = \varepsilon^{-2}u(\varepsilon x) \) satisfies, for \( x\in B_{1} \),
\begin{equation*}
	\Delta u_{\varepsilon}(x) = \varepsilon^{-2}\Delta[ u(\varepsilon x)] = [\Delta u(y)]\vert_{y=\varepsilon x} = -q_{\varepsilon}(x)u(\varepsilon x) = -\varepsilon^{2} q_{\varepsilon}(x) u_{\varepsilon}(x),
\end{equation*}
where \( q_{\varepsilon}(x) = q(\varepsilon x) \in L^r(B_1) \).
So \( u_{\varepsilon} \) should satisfy the equation
\begin{equation*}
\begin{cases}
	(\Delta +\varepsilon^{2}q_{\varepsilon})u_{\varepsilon} = 0 &\text{in } B_{1},\\
	u_{\varepsilon}=1&\text{on }\partial B_{1}.
\end{cases}
\end{equation*}
Taking \( u_{\varepsilon}(x) = 1+r_{\varepsilon}(x) \), we see that $r_{\eps}$ should satisfy the Poisson equation
\begin{equation} \label{eq-nonzero-r_1_new}
\begin{cases}
	(\Delta +\varepsilon^{2}q_{\varepsilon})r_{\varepsilon} = -\eps^2 q_{\eps} &\text{in } B_{1},\\
	r_{\varepsilon} = 0&\text{on }\partial B_{1}.
\end{cases}
\end{equation}

Next we show that \eqref{eq-nonzero-r_1_new} has a unique solution, i.e.\ $N_{\eps^2 q_{\eps}} = \{0\}$ in the notation of Section \ref{sec_solvability_linear}. Note that 
\[
\norm{\eps^2 q_{\eps}}_{L^r(B_1)} = \eps^{2-n/r} \norm{q}_{L^r(B_{\eps})} \leq \eps^{2-n/r} \norm{q}_{L^r(B_{1})}.
\]
Since $r > n/2$, we have $2-n/r > 0$. Now if $\psi \in N_{\eps^2 q_{\eps}}$, i.e.\ $\psi \in W^{2,r}(B_1)$ satisfies 
\begin{equation*}
\begin{cases}
	\Delta \psi = -\varepsilon^{2}q_{\varepsilon} \psi &\text{in } B_{1},\\
	\psi = 0&\text{on }\partial B_{1},
\end{cases}
\end{equation*}
then by Proposition \ref{prop_linearized_ri_wonep} (since $N_{0} = \{0\}$) we have 
\[
\norm{\psi}_{W^{2,r}(B_1)} \leq C_{n,r} \norm{\varepsilon^{2}q_{\varepsilon} \psi}_{L^r(B_1)} \leq C_{n,r,q} \eps^{2-n/r} \norm{\psi}_{L^{\infty}(B_1)} \leq C_{n,r,q} \eps^{2-n/r} \norm{\psi}_{W^{2,r}(B_1)}.
\]
In the last step we used Sobolev embedding. By choosing $\eps \leq \eps_0 = \eps_0(n,r,q)$, we may absorb the right hand side to the left. It follows that $N_{\eps^2 q_{\eps}} = \{0\}$ for $\eps \leq \eps_0$.

Proposition \ref{prop_linearized_ri_wonep} now ensures that \eqref{eq-nonzero-r_1_new} has a unique solution $r_{\eps} \in W^{2,r}(B_1)$. Writing the equation as $\Delta r_{\eps} = -\varepsilon^{2}q_{\varepsilon} r_{\varepsilon} - \eps^2 q_{\eps}$ and using the norm estimate in Proposition \ref{prop_linearized_ri_wonep} yields 
\[
\norm{r_{\eps}}_{W^{2,r}(B_1)} \leq C_{n,r} \norm{\eps^2 q_{\eps}}_{L^r(B_1)} (\norm{r_{\eps}}_{L^{\infty}(B_1)} + 1).
\]
We may use the estimate $\norm{\eps^2 q_{\eps}}_{L^r(B_1)} \leq \eps^{2-n/r} \norm{q}_{L^r(B_{1})}$ above and Sobolev embedding to conclude that 
\[
\norm{r_{\eps}}_{W^{2,r}(B_1)} \leq C_{n,r,q} \eps^{2-n/r}
\]
for $\eps$ sufficiently small. By Sobolev embedding again, we obtain $\norm{r_{\eps}}_{L^{\infty}(B_1)} \leq 1/2$ for $\eps$ small, so $u_{\eps} = 1+r_{\eps} \geq 1/2$ in $B_1$ for $\eps$ small. It follows that $u(y) = \eps^2 u_{\eps}(y/\eps) \geq \eps^2/2$ for $y \in B_{\eps}$.
\end{proof}

The following result proves Runge approximation in our setting. The argument in \cite{Leis1967} proves that Dirichlet eigenvalues are strictly decreasing with respect to increasing the domain also under our assumptions. Combining this with Proposition \ref{prop_linearized_ri_wonep} shows that a ball $B$ as in the statement below can always be found.

\begin{Lemma}[Runge approximation] \label{lemma_runge_wtwor}
Let \( U \subset\mathbb{R}^{n} \) be an open set with smooth boundary such that $\mR^n \setminus \ol{U}$ is connected.
Let \( q\in L^{r}(U) \), \( r>n/2 \), and extend \( q \) by zero into an open ball \( B\subset\mathbb{R}^{n} \) containing \( \ol{U} \) such that the equation
\begin{equation*}
\begin{cases}
	(\Delta+q)u = 0&\text{in }B,\\
	u = f&\text{on }\partial B,
\end{cases}
\end{equation*}
is well-posed in \( W^{2,r}(B) \) for \( f\in W^{2-1/r,r}(\partial B) \).
Define the spaces
\begin{equation*}
\begin{aligned}
	V_{B} &= \{u\in W^{2,r}(B)\colon (\Delta+q)u = 0 \text{ in $B$} \}\\
	V_{U} &= \{u\in W^{2,r}(U)\colon (\Delta+q)u = 0 \text{ in $U$} \} \\
	R_{U} &= \{u\vert_{U} \colon u\in V_{B} \}.
\end{aligned}
\end{equation*}
Then \( R_{U} \) is dense in \( V_{U} \) with respect to the \( W^{2,r}(U) \)-norm.
\end {Lemma}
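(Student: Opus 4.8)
The plan is to prove this Runge approximation result by the standard Hahn--Banach duality argument adapted to the $W^{2,r}$ setting. Suppose, for contradiction, that $R_U$ is not dense in $V_U$ with respect to the $W^{2,r}(U)$-norm. Since $V_U$ is a closed subspace of $W^{2,r}(U)$ (closedness follows from Lemma \ref{lemma_regularity_linear} together with the fact that weak-$W^{2,r}$ limits of solutions are solutions), there is a nonzero continuous linear functional on $W^{2,r}(U)$ that annihilates $R_U$. By the description of the dual of a Sobolev space, such a functional can be represented by some compactly supported distribution $F$ on $U$ of the form $F = \sum_{|\beta| \leq 2} \p^{\beta} g_{\beta}$ with $g_{\beta} \in L^{r'}(U)$ (i.e.\ $F \in W^{-2,r'}(U)$ with compact support), such that $\langle F, u|_U \rangle = 0$ for every $u \in V_B$.

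Next I would solve an inhomogeneous adjoint problem in $B$: let $\phi \in W^{2,r'}_0(B)$ (or the appropriate space, using that $r' < n/(n-2)$ so the adjoint potential problem is well-posed by the well-posedness hypothesis on $B$ combined with Proposition \ref{prop_linearized_ri_wonep}) solve
\[
(\Delta+q)\phi = F \text{ in } B, \qquad \phi|_{\p B} = 0.
\]
Here one must check solvability of the adjoint equation with an $W^{-2,r'}$ right-hand side; since $q$ is real and $\Delta + q$ is formally self-adjoint, well-posedness in $W^{2,r}(B)$ for data in $W^{2-1/r,r}(\p B)$ transfers to the dual problem by a duality/transposition argument. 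By elliptic regularity (Lemma \ref{lemma_regularity_linear} and its bootstrapping, applied in the dual scale), $\phi$ is as regular as $F$ allows away from $\supp F$, and in particular $\phi$ is a genuine $W^{2,r'}_{loc}$ solution of $(\Delta+q)\phi = 0$ in $B \setminus \supp F$. Since $\supp F \subseteq U$ and $\mR^n \setminus \ol{U}$ is connected and meets $\p B$ where $\phi = 0$ and $\p_\nu \phi = 0$ (the latter because $\phi \in W^{2,r'}_0(B)$), unique continuation for $L^r$-type potentials (as in Lemma \ref{lemma_linear_cd_estimate}, citing \cite{JerisonKenig1985}) forces $\phi \equiv 0$ in the unbounded component of $B \setminus \supp U$... more precisely on the connected open set $B \setminus \ol{U}$, hence $\phi$ vanishes near $\p B$.

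The final step is to test $F$ against an arbitrary $u \in V_B$ and integrate by parts twice, moving $\Delta + q$ onto $\phi$:
\[
0 = \langle F, u \rangle = \langle (\Delta+q)\phi, u \rangle = \langle \phi, (\Delta+q)u \rangle + (\text{boundary terms on } \p B) = 0,
\]
where the boundary terms vanish because $\phi|_{\p B} = \p_\nu \phi|_{\p B} = 0$ and $(\Delta+q)u = 0$ in $B$. This identity is automatically satisfied and gives no information directly; the actual contradiction comes instead from the following refinement: the functional $F$ acting on $W^{2,r}(U)$ can be rewritten, using that $\phi$ vanishes in a neighborhood of $\p U$ inside $B \setminus \ol U$, as an integration against $\phi$ supported in $\ol{U}$, and then $F \equiv 0$ as a functional on $W^{2,r}(U)$ because for \emph{every} $\psi \in W^{2,r}(U)$ (not just solutions) one extends $\psi$ and pairs with $(\Delta+q)\phi = F$; since $\phi$ has zero Cauchy data on $\p U$ one gets $\langle F, \psi \rangle = \int_U \phi \,(\Delta+q)\psi\,dx$, and running the unique continuation to conclude $\phi \equiv 0$ on all of $B$ forces $F = 0$, contradicting $F \neq 0$.

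The main obstacle I expect is the careful handling of the dual/adjoint solvability and regularity in the $W^{2,r'}$ scale: one needs that $(\Delta+q): W^{2,r'}_0(B) \to W^{-2,r'}(B)$-type statements hold, that $r' < n/(n-2)$ is compatible with the Sobolev embeddings used to control $q\phi$, and that the unique continuation theorem of \cite{JerisonKenig1985} applies to $\phi$ with this (possibly lower than $W^{2,r}$) regularity after extending by zero. One must also be slightly careful that $\phi \in W^{2,r'}_0(B)$ genuinely has vanishing Dirichlet \emph{and} Neumann traces on $\p B$ in the trace sense needed to kill the boundary terms; this is where the hypothesis that $B$ has smooth boundary and the equation on $B$ is well-posed is used. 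Everything else — closedness of $V_U$, the Hahn--Banach step, and the integration by parts — is routine once the right function spaces are fixed.
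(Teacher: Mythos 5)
Your overall strategy — Hahn--Banach duality against an annihilator of $R_U$, solving an adjoint problem in $B$, and killing the functional via unique continuation into $B\setminus\ol{U}$ — is the same one the paper uses. However, the key intermediate claim in your proposal does not hold, and the paper has to work considerably harder precisely at that point.

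You assert that the adjoint solution $\phi$ of $(\Delta+q)\phi = F$, $\phi|_{\p B}=0$, lies in $W^{2,r'}_0(B)$ and hence has vanishing Cauchy data $\phi|_{\p B}=\p_\nu\phi|_{\p B}=0$, which you then feed into unique continuation. This is wrong on two counts. First, the source $F$ coming from the dual of $W^{2,r}$ sits in $W^{-2,r'}$, so elliptic regularity can only put the solution in $L^{r'}(B)$, not in $W^{2,r'}(B)$; the paper works with exactly this: a \emph{very weak} solution $w\in L^{r'}(B)$ defined by $\int_B w(\Delta+q)\varphi\,dx = \langle h,\varphi\rangle$ for $\varphi\in W^{2,r}(B)\cap W^{1,r}_0(B)$ (Lemma \ref{lemma_very_weak_sol_lrprime}). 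Second, even if $\phi$ had $W^{2,r'}$ regularity, imposing the Dirichlet condition $\phi|_{\p B}=0$ places it in $W^{2,r'}(B)\cap W^{1,r'}_0(B)$, not $W^{2,r'}_0(B)$; there is no reason for the normal derivative $\p_\nu\phi|_{\p B}$ to vanish, so the hypotheses of the Cauchy-data unique continuation step are not met. The fact that $w$ (effectively) has zero Cauchy data on $\p B$ is a \emph{conclusion} of the paper's argument, not an assumption: the paper extends $w$ by zero to a larger set $Q\supset B$ and proves the extension is distributionally harmonic across $\p B$. It does this by the correction trick $\pi = \varphi|_B - \psi$, where $\psi\in V_B$ matches the boundary data of the test function $\varphi\in C^\infty_c(Q\setminus\ol{U})$, so $\pi\in W^{2,r}(B)\cap W^{1,r}_0(B)$ is admissible in the very weak formulation and $\langle h,\pi\rangle = \langle h,\varphi|_B\rangle - \langle h,\psi\rangle = 0$ by the support condition and by $\psi\in V_B$ respectively. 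Only then does unique continuation from the open set $Q\setminus B$ give $w=0$ in $B\setminus\ol{U}$.

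A secondary issue: your closing paragraph aims to conclude $F\equiv 0$, ``contradicting $F\ne 0$.'' That is both unnecessary and unachievable: $F$ is supported in $\ol{U}$ and obstructs continuation of $\phi$ into $U$, so there is no way to get $\phi\equiv 0$ in all of $B$. What is needed (and what the paper proves) is only that $\langle F,v\rangle = 0$ for every $v\in V_U$, which already follows from $w=0$ in $B\setminus\ol{U}$ by plugging an extension $\tilde v\in W^{2,r}_0(B)$ of $v$ into the very weak formulation and observing $(\Delta+q)\tilde v=0$ in $U$.
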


The proof is based on the Hahn-Banach theorem, unique continuation, and a duality argument involving very weak solutions. Let $q \in L^{r}(\Omega)$, $r> n/2$ and assume that $0$ is not a Dirichlet eigenvalue of $\Delta+q$ in $\Omega$. If $\mu$ is a bounded linear functional on $W^{2,r}(\Om)$ and $\frac{1}{r} + \frac{1}{r'} = 1$, we say that $u \in L^{r'}(\Omega)$ is a \emph{very weak solution} of
%\begin{equation} \label{veryweak_def}
\[
(\Delta+q)u = \mu \text{ in $\Omega$}, \qquad u|_{\p \Omega} = 0,
\] %\end{equation}
if 
\[
\int_{\Om} u(\Delta+q)\varphi \,dx = \mu(\varphi)
\]
for any $\varphi \in W^{2,r}(\Om) \cap W^{1,r}_0(\Om)$. The next result will be used in the proof of Lemma \ref{lemma_runge_wtwor}.

\begin{Lemma} \label{lemma_very_weak_sol_lrprime}
Let \( r>n/2 \) and \( \Omega\subset\mathbb{R}^{n} \) be an open set with smooth boundary.
Let \( q\in L^{r}(\Omega) \) and suppose that the equation
\begin{equation*}
\begin{cases}
	(\Delta+q)u = F&\text{in }\Omega,\\
	u = 0&\text{on }\partial \Omega,
\end{cases}
\end{equation*}
is well-posed in \( W^{2,r}(\Omega) \) for \( F\in L^{r}(\Omega) \).
Then there exists for every functional \( \mu\in W^{2,r}(\Omega)^{*} \) a very weak solution \( v\in L^{r'}(\Omega) \) of
\begin{equation*}
\begin{cases}
	(\Delta+q)v = \mu&\text{in }\Omega,\\
	v = 0&\text{on }\partial\Omega.
\end{cases}
\end{equation*}
\end{Lemma}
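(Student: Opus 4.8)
The plan is to obtain $v$ as the adjoint of the solution operator for the well-posed problem, using that $\Delta+q$ is (formally) self-adjoint so that no separate adjoint equation is needed.

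First I would set up the functional-analytic framework. By hypothesis, the map $F \mapsto u$, where $u \in W^{2,r}(\Omega) \cap W^{1,r}_0(\Omega)$ is the unique solution of $(\Delta+q)u = F$ with $u|_{\p\Omega}=0$, is a bounded linear bijection $G: L^r(\Omega) \to Z$, where $Z := W^{2,r}(\Omega) \cap W^{1,r}_0(\Omega)$ is the closed subspace of $W^{2,r}(\Omega)$ consisting of functions vanishing on $\p\Omega$; by the open mapping theorem $G$ is an isomorphism. (That $Z$ is closed follows e.g.\ from the trace theorem, since $\{f : f|_{\p\Omega}=0\}$ is closed in the relevant trace space.) Given $\mu \in W^{2,r}(\Omega)^*$, its restriction $\mu|_Z$ is a bounded functional on $Z$, and I would like to represent the functional $F \mapsto \mu(G F)$ on $L^r(\Omega)$ by an element of $L^{r'}(\Omega)$.

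The key step is duality: the composition $\mu \circ G : L^r(\Omega) \to \mR$ (or $\mC$) is a bounded linear functional on $L^r(\Omega)$, so by the Riesz representation theorem for $L^p$ spaces ($1 < r < \infty$) there is a unique $v \in L^{r'}(\Omega)$ with
\[
\int_{\Omega} v F \,dx = \mu(G F) \qquad \text{for all } F \in L^r(\Omega).
\]
Now I would verify that this $v$ is the desired very weak solution. Given any $\varphi \in W^{2,r}(\Omega) \cap W^{1,r}_0(\Omega) = Z$, set $F := (\Delta+q)\varphi \in L^r(\Omega)$; then $G F = \varphi$ by uniqueness of solutions (well-posedness), so the identity above reads $\int_\Omega v (\Delta+q)\varphi \,dx = \mu(\varphi)$, which is exactly the definition of a very weak solution of $(\Delta+q)v = \mu$, $v|_{\p\Omega}=0$. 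Existence is thus established; the boundary condition $v|_{\p\Omega}=0$ is understood in the very weak sense built into the definition (it is encoded in the test space $Z$ and the absence of boundary terms in the integration by parts), so there is nothing further to check.

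The main obstacle, such as it is, is bookkeeping rather than substance: one must be careful that the formal self-adjointness of $\Delta+q$ is being used correctly — that is, that testing $(\Delta+q)v = \mu$ against $\varphi \in Z$ really corresponds to evaluating $\mu$ on $G^{-1}$ of something, with no stray boundary contributions, which is why the test functions are required to vanish on $\p\Omega$ (and why only the trace $\varphi|_{\p\Omega}=0$, not $\p_\nu\varphi|_{\p\Omega}=0$, is imposed: a single integration by parts of $\int v\,\Delta\varphi$ against a function $v$ with only $L^{r'}$ regularity is interpreted distributionally, and the definition sidesteps the issue by never integrating by parts explicitly). If one instead wanted uniqueness of $v$, that would require a unique continuation or density argument (density of $\{(\Delta+q)\varphi : \varphi \in Z\}$ in $L^r$, which is precisely surjectivity of $G^{-1}$, i.e.\ well-posedness again), but the statement only asserts existence, so this is not needed here.
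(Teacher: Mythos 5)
Your proof is correct and is essentially the same as the paper's: you take $v$ to be the Riesz representative of the bounded functional $F \mapsto \mu(GF)$ on $L^r(\Omega)$, where $G$ is the solution operator (the paper writes $G = P^{-1}$), and then verify the very weak formulation by writing an arbitrary test function $\varphi \in W^{2,r}(\Omega)\cap W^{1,r}_0(\Omega)$ as $\varphi = G((\Delta+q)\varphi)$. The additional remarks on closedness of $Z$, the role of the test space in encoding the boundary condition, and uniqueness are correct but not needed; the core argument coincides with the paper's.
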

\begin{proof}
By the well-posedness \( P=\Delta+q \) maps \( W^{2,r}(\Omega)\cap W^{1,r}_{0}(\Omega) \) surjectively onto \( L^{r}(\Omega) \) and its inverse \( P^{-1} \) is bounded.
Define for \( \varphi\in L^{r}(\Omega) \) the functional \( L\varphi = \inner{\mu}{P^{-1}\varphi} \).
Then \( L \) is bounded,
\begin{equation*}
	\abs{L\varphi} = \abs{\inner{\mu}{P^{-1}\varphi}} \leq C \norm{P^{-1}\varphi}_{W^{2,r}(\Omega)}\leq \tilde{C}\norm{\varphi}_{L^{r}(\Omega)}
\end{equation*}
and hence $L$ belongs to \( L^{r}(\Omega)^{*} \).
By Riesz representation theorem, we get \( v\in L^{r'}(\Omega) \) such that
\begin{equation*}
	\int_{\Omega}v\varphi = L\varphi = \inner{\mu}{P^{-1}\varphi}.
\end{equation*}
Using the well-posedness we may write any \( \varphi \) as \( \varphi = P\psi \) for \( \psi\in W^{2,r}(\Omega)\cap W^{1,r}_{0}(\Omega) \) and hence
\begin{equation*}
	\int_{\Omega}vP\psi = \inner{\mu}{\psi}\quad\forall \psi\in W^{2,r}(\Omega)\cap W^{1,r}_{0}(\Omega).\qedhere
\end{equation*}
\end{proof}

\begin{proof}[Proof of Lemma \ref{lemma_runge_wtwor}]
We equip $V_{U}$ with the $W^{2,r}(U)$ norm, so $V_{U}$ is a closed subspace of $W^{2,r}(U)$ and $R_{U}$ is a linear subspace. By the Hahn-Banach theorem, $R_{U}$ will be dense in $V_{U}$ if every bounded linear functional on $W^{2,r}(U)$ that vanishes on $R_{U}$ must also vanish on $V_{U}$. Let \( \tilde{h}\in W^{2,r}(U)^{*} \) such that
\begin{equation}\label{eq-runge2-5}
	\inner{\tilde{h}}{v} = 0\quad\forall v\in R_{U}.
\end{equation}
We wish to show that $\inner{\tilde{h}}{v} = 0$ for all $v \in V_{U}$.

Let \( h\in W^{2,r}(B)^{*} \) be the extension of \( \tilde{h} \) defined by \( \inner{h}{\varphi}\coloneqq \inner{\tilde{h}}{\varphi\vert_{U}} \), for \( \varphi \in W^{2,r}(B) \).
Then \( \operatorname{supp}(h)\subseteq \ol{U} \).
We use Lemma \ref{lemma_very_weak_sol_lrprime} and let \( w\in L^{r'}(B) \) be a very weak solution of the adjoint problem
\begin{equation*}
\begin{cases}
	(\Delta+q)w = h&\text{in }B,\\
	w = 0&\text{on }\partial B.
\end{cases}
\end{equation*}
In other words, \( w \) satisfies
\begin{equation}\label{eq-runge2-1}
	\int_{B}w[\Delta+q]\varphi\,dx = \inner{h}{\varphi}\quad\forall\varphi\in W^{2,r}(B)\cap W^{1,r}_{0}(B).
\end{equation}
There is a bounded extension operator \( W^{2,r}(U) \to W^{2,r}_{0}(B) \).
We may therefore extend any solution \( v\in V_{U} \) to a function \( \tilde{v}\in W^{2,r}_{0}(B) \).
Taking \( \varphi = \tilde{v} \) in \eqref{eq-runge2-1}, we have
\begin{equation}\label{eq-runge2-2}
	\int_{B\setminus U}w[\Delta+q]\tilde{v}\,dx = \inner{h}{\tilde{v}}.
\end{equation}
The main step is to show that \( w = 0 \) in \( B\setminus \ol{U} \).
This will be accomplished by unique continuation from an open set.

Let \( Q\subset\mathbb{R}^{n} \) be a bounded open set containing \( B \) and define \( u\in L^{r'}(Q) \) by
\begin{equation*}
	u(x)=
\begin{cases}
	w(x)&\text{if }x\in B, \\
	0&\text{otherwise}.
\end{cases}
\end{equation*}
We show that \( u \) is a distributional solution of \( \Delta u = 0 \) in \( Q\setminus\overline{U} \) in the sense that
\begin{equation*}
	\int_{Q}u \Delta\varphi \,dx = 0\quad\forall \varphi\in C^{\infty}_{c}(Q\setminus\overline{U}).
\end{equation*}
For such \( \varphi \), extend it by zero into \( U \), so that \( \varphi\in C^{\infty}_{c}(Q) \) with \( \operatorname{supp}(\varphi)\cap \ol{U} = \emptyset \).
Let \( \psi\in W^{2,r}(B) \) satisfy
\begin{equation*}
\begin{cases}
	(\Delta+q) \psi = 0&\text{in }B,\\
	\psi = \varphi&\text{on }\partial B.
\end{cases}
\end{equation*}
Then \( \pi = \varphi\vert_{B}-\psi\in W^{2,r}(B)\cap W^{1,r}_{0}(B) \).
From this and \( u\vert_{Q\setminus B} = 0 \), \( q|_{B \setminus U} = 0\), and \( \varphi|_{U} = 0 \) we have
\begin{equation*}
\begin{aligned}
	\int_{Q}u \Delta\varphi \,dx &= \int_{B}w \Delta \varphi \,dx = \int_{B}w (\Delta+q) \varphi \,dx = \int_{B}w (\Delta+q) \pi \,dx \\
	&= \inner{h}{\pi} = \inner{h}{\varphi\vert_{B}} - \inner{h}{\psi}.
\end{aligned}
\end{equation*}
Now \( \psi\in V_{B} \) so we have \( \inner{h}{\psi} = 0 \) by \eqref{eq-runge2-5}.
And \( \inner{h}{\varphi\vert_{B}} = 0 \) follows from \( \operatorname{supp}(h)\cap \operatorname{supp}(\varphi\vert_{B}) = \emptyset \).
This shows that \( u \) is a distributional solution of $\Delta u = 0$ in \( Q\setminus\overline{U} \). Since $u = 0$ in $Q \setminus B$, unique continuation (see \cite[Theorem 6.3 and Remark 6.7]{JerisonKenig1985}) shows that \( w = u = 0 \) in \( B\setminus\overline{U} \). Here we used that $\mR^n \setminus \ol{U}$ is connected. 
Now \eqref{eq-runge2-2} reduces to
\begin{equation*}
	\inner{h}{v} = 0\quad\forall v\in V_{U}
\end{equation*}
which completes the proof.
\end{proof}

\begin{proof}[Proof of Proposition \ref{prop_runge_nonvanishing_solution}]
We begin by using Lemma \ref{lemma_nonzero_solution_small_ball} to find a positive function $u_0 \in W^{2,r}(B(x_0,\eps))$ solving $(\Delta+q)u_0 = 0$ in $B(x_0, \eps)$, where $\eps > 0$ is sufficiently small. Then Lemma \ref{lemma_runge_wtwor}, with $U = B(x_0,\eps)$ and $B$ a suitable large ball containing $\ol{\Om}$, ensures that there are $u_j \in W^{2,r}(B)$ solving $(\Delta+q)u_j = 0$ in $B$ such that $u_j|_{B(x_0,\eps)} \to u_0$ in $W^{2,r}(B(x_0,\eps))$ and thus also in $L^{\infty}(B(x_0,\eps))$ by Sobolev embedding. By choosing $j$ large enough we see that $u_j|_{B(x_0,\eps)}$ is positive, which proves the result.
\end{proof}

\printbibliography

\end{document}